\newtheorem{theorem}{Theorem}[section]
\newtheorem{coro}{Corollary}[section]
\newtheorem{definition}{Definition}[section]
\theoremstyle{remark}
\newtheorem{remark}{Remark}[section]
\newtheorem{lem}{Lemma}[section]
\newtheorem{example}{Example}[section]
\newcommand{\be}{\begin{eqnarray}}
\newcommand{\ee}{\end{eqnarray}}
\newcommand{\R}{\mathbb{R}}
\newcommand{\C}{\mathbb{C}}
\newcommand{\N}{\mathbb{N}}
\newcommand{\func}[1]{\operatorname{#1}}
\numberwithin{equation}{section}
\begin{document}
\title{A class of infinite-dimensional Gaussian processes defined through generalized fractional operators}

\keywords{Generalized fractional operators, Bernstein functions, L\'{e}vy measures, Fractional Brownian motion, Sonine pair, White noise space\\
\emph{AMS Subject Classification 2020}:  60G15, 26A33, 60H40, 60G22.
}

\date{\today }

\author{Luisa Beghin$^{1}$}
\address{${}^1$ Department of Statistical Sciences, Sapienza, University of
Rome. P.le Aldo Moro, 5, Rome, Italy}
\email{luisa.beghin@uniroma1.it}

\author{Lorenzo Cristofaro$^{2}$} 
\address{${}^2$ Department of Statistical Sciences, Sapienza, University of
Rome. P.le Aldo Moro, 5, Rome, Italy}
\email{lorenzo.cristofaro@uniroma1.it}

\author{Yuliya
Mishura$^{3}$}
\address{${}^3$ Department of Mathematics, Taras Shevchenko National
University of Kiev, Volodimirska Street 64, 01033 Kiev, Ukraine}
\email{yuliyamishura@knu.ua}

\begin{abstract}
The generalization of fractional Brownian motion in infinite-dimensional white and grey noise spaces has been recently carried over, following the Mandelbrot-Van Ness representation, through Riemann-Liouville type fractional operators.
Our aim is to extend this construction  by means of general fractional derivatives and integrals, which we define through Bernstein functions. According to the conditions satisfied by the latter, some properties of these processes (such as continuity, local times, variance asymptotics and persistence) are derived. On the other hand, they are proved to display short- or long-range dependence, if obtained by means of a derivative or an integral, respectively, regardless of the
Bernstein function chosen. Moreover, this kind of construction allows us to define the corresponding noise and to derive an Ornstein-Uhlenbeck type process, as solution of an integral equation.
\end{abstract}

\maketitle

\section{Introduction}

Fractional calculus and, in particular, the \emph{Riemann-Liouville fractional operators} (RLFOs hereinafter) were defined for the first time in the XIX century, but only in the last thirty years they have been exploited in a wide range of applications. Very recently, they have been extended to the so-called \emph{generalized fractional operators} (GFOs); see, for example, \cite{KOC}, \cite{CHE} and \cite{LUC}. GFOs consist of integro-differential operators defined by convolution kernels, more general than a power function, with an integrable singularity in the origin. In this framework, a certain class of kernels was identified as being in a Sonine pair related to a certain Bernstein function (see Section \ref{PrelRes}, for details). These operators were used to model anomalous diffusions, subordinated and fractional point processes, in \cite{KOC} and \cite{TOA}, among the others.

In probability theory, besides many other applications, a RLFO was used by Mandelbrot and Van Ness in order to define fractional Brownian motion (FBM) through a Wiener integral with a power-law convolution kernel (see \cite{MAN}). This weighted moving-average representation produces either short- or long-range dependence, according to the power's value of the kernel. This approach is particularly fruitful, since Wiener integrals are widely applied in finance (see, for example, \cite{MIS2}), as well as in other disciplines.

In an originally unrelated framework, so-called \emph{white noise analysis} was developed in order to extend the standard stochastic calculus, based on the standard (Gaussian) white noise, to an infinite-dimensional setting.  The reference texts for this kind of literature are represented by \cite{HID2} and \cite{HID}. The theory of infinite-dimensional spaces allows to define the space of distributions, also called Hida space. Then, by means of the so-called $S$ and $T$ integral transforms, the existence of the noise as distribution (i.e. as the Gateaux derivative of a differentiable generalized random variable) can be proved. These theoretical tools also provide a constructive way to define the noise and allow to study functionals of generalized random variables.

The construction of FBM in infinite-dimensional white noise spaces has been
carried over, through the RLFOs (in \cite{BEN}, \cite{BEN2} and \cite{BIA}) and further generalized to non-Gaussian cases and grey noise spaces, by
many authors (see, for example, \cite{GRO}, \cite{GRO2}, \cite{MUR}, \cite{BOC}, \cite{DAS} and \cite{BEG}).
These new processes exhibit anomalous diffusion properties, self-similarity and a richer covariance structure.  Consequently,  many important functionals, such as Donsker's delta, local times and Feynman-Kac integrals have been defined for them.

Our aim is to extend the construction of FBM in infinite-dimensional white noise spaces, by means of GFOs defined through Bernstein functions. As we will see, the properties of these processes (e.g. continuity, local times, variance asymptotics and persistence) will depend on the conditions satisfied by the corresponding Bernstein function. In particular, we will prove that, regardless of the kernel chosen for the GFO, the corresponding process displays short- or long-range dependence, in the derivative and integral cases, respectively.

Finally, our construction allows us to define the corresponding noise, in a proper distributions' space, and to derive an Ornstein-Uhlenbeck type process, as solution of an integral equation.

The paper is organized as follows: in the next section, we present some preliminary results; these are needed to properly define the GFOs (both in the derivative and integral cases) in terms of tempered distributions, in order to get their Fourier transforms and to define an inner product on the Schwartz space. In particular, we prove some properties of the GFOs, such as the continuity and an integration-by-parts rule. In Section \ref{SecProc}, based on the previous results, we build and characterize the new class of processes (that we will call \emph{Bernstein-Gaussian motion}, hereafter BGM) in the white-noise space; moreover, their distributional properties as well as continuity of sample paths and local time's behavior are analysed. In Section \ref{SecNoise}, we define the corresponding BG-noise as a distribution and compute its $S$-transform; then, we are also able to construct the so-called BG-Ornstein-Uhlenbeck process.
Based on a generalized Mandelbrot-Van Ness representation of the BGM, in Section \ref{SecVar}, we analyze the asymptotic behavior of its variance and covariance functions, giving the conditions under which it displays either short or long range dependence. Finally, in Section \ref{SubSec: SpecialCases}, we show how well-known or new processes, besides FBM, can be obtained as special cases of the BGM, by specifying the Bernstein function. The Appendix contains definitions of well-known fractional operators, as well as some tools of white-noise analysis which are needed for the main results.

\section{Preliminary results}\label{PrelRes}

Let $\varphi:\mathbb{R}^{+}\rightarrow [0,\infty)$ be a Bernstein
function, i.e. a function belonging to $C^{\infty }$ and such that

\begin{equation*}
(-1)^{k-1}\frac{d^{k}}{dx^{k}}\varphi(x)\geq 0,\qquad
x>0,\;k=1,2,\dots
\end{equation*}%
We will restrict ourselves to a complete Bernstein function, i.e. to a $\varphi$ that admits the representation
\begin{equation}
\varphi(x)=a+bx+\int_{0}^{+\infty }(1-e^{-sx})\overline{\nu} (s) ds,\qquad
x>0,  \label{gg}
\end{equation}%
where $\overline{\nu} (\cdot)$ is the (completely monotone) density, w.r.t. the Lebesgue measure, of the L\'{e}vy measure $\Upsilon (\cdot )$ on $(0,+\infty ),$ i.e. $\overline{\nu }%
(z):=\Upsilon (dz)/dz.$
Recall that, by definition, $\int_{0}^{+\infty }(s \wedge 1)\overline{\nu} (s) ds<\infty $ and it is easy to
check that the last condition implies that $\int_{\epsilon}^{+\infty
}\overline{\nu} (s) ds<\infty $,  for any $\epsilon \in (0,1]$. We assume hereafter
that $a=b=0$ in (\ref{gg}).

Let $\nu :\mathbb{R}^{+}\backslash \{0\}\rightarrow \mathbb{R}^{+}$ be the
tail of the L\'{e}vy density, i.e. $\nu (x):=\int_{x}^{+\infty }%
\overline{\nu }(z)dz,$ $x>0$; then $\nu$ is, in general, a right-continuous,
non-increasing function. It is well-known that%
\begin{equation}
\varphi(x)=x\int_{0}^{+\infty }e^{-sx}\nu (s)ds;  \label{tr}
\end{equation}%
see \cite{SCH}. Moreover, we have that
\begin{eqnarray}
\int_{0}^{1}\nu (x)dx &=&\int_{0}^{1}\int_{x}^{+\infty }\overline{\nu }%
(z)dzdx  \label{dd} \\
&=&\int_{0}^{1}z\overline{\nu }(z)dz+\int_{1}^{+\infty }\overline{\nu }%
(z)dz<\infty .  \notag
\end{eqnarray}

Applying Proposition 3.6 in \cite{SCH}, we can extend the Bernstein
function $\varphi(\cdot )$ to the complex half-plane $\overline{\mathcal{H}}%
=\{a+ib:a\geq 0,b\in \mathbb{R}\},$ so that $\varphi(i\xi )$ is
well-defined for all $\xi \in \mathbb{R}.$

We will restrict ourselves to the case where the Bernstein function $\varphi$ satisfies the
following condition:

\begin{enumerate}
\item[$\mathbf{(C)}$] $\lim_{\theta \rightarrow 0}\varphi
(\theta)=0$\, and\, $\lim_{\theta \rightarrow +\infty }\varphi
(\theta)=+\infty.$
\end{enumerate}

Condition $\textbf{(C)}$ implies that $\nu$ is the tail of a L\'{e}vy density with infinite mass on $\mathbb{R}^+$: indeed, by the initial and finite value
theorems, respectively, we have that
\begin{eqnarray}
\lim_{x\rightarrow 0}\nu (x) &=&\lim_{\theta \rightarrow +\infty
}\theta
\tilde{\nu}(\theta )=\lim_{\theta \rightarrow +\infty }\varphi
(\theta)=+\infty,   \label{nu1} \\
\lim_{x\rightarrow +\infty }\nu (x) &=&\lim_{\theta \rightarrow
0}\theta \tilde{\nu}(\theta )=\lim_{\theta \rightarrow 0}\varphi
(\theta)=0,  \label{nu2}
\end{eqnarray}
where we denote by $\tilde{f}(\cdot)$ the Laplace transform of a function $f$.
 Now, let $\kappa :\mathbb{R}^{+}\backslash \{0\}\rightarrow
\mathbb{R}^{+}$ be a locally integrable function (with integrable
singularity at zero) defined as the associate Sonine kernel of $\nu$,
i.e. let $\kappa (\cdot )$ and $\nu (\cdot )$ satisfy the so-called \textquotedblleft Sonine condition"
\begin{equation*}
\int_{0}^{t}\nu (z)\kappa (t-z)dz\equiv 1, \quad t> 0.
\end{equation*}
The latter can be equivalently expressed,
in the Laplace domain, as $\tilde{\kappa}(\theta
)\tilde{\nu}(\theta )=1/\theta $. If $\kappa (\cdot )$ is the
Sonine pair of a tail L\'{e}vy density $\nu
(\cdot )$, by considering (\ref{tr}), we have that%
\begin{equation}
\tilde{\kappa}(\theta )=\int_{0}^{+\infty }e^{-s\theta }\kappa (s)ds=\frac{1%
}{\varphi(\theta )}.  \label{ex}
\end{equation}%
The existence and non-negativity of $\kappa (\cdot )$ is
guaranteed by the
Bernstein theorem, since $\varphi(\cdot )$ is a Bernstein function and thus $%
\tilde{\kappa}(\cdot)$ is completely monotone (by Theorem 3.7 in \cite{SCH}). 
Moreover,  as a consequence of the assumption that the Bernstein function $\varphi(\cdot)$ is complete, we can be sure that $\kappa(\cdot)$ is monotonically non-increasing. 
Indeed, by Theorem 7.3 in \cite{SCH}, the function $1/\varphi(\cdot)$ is Stieltjes and then, by Theorem 2.2 in \cite{SCH}, it is the Laplace transform of a completely monotone density function. 
As a consequence of $\textbf{(C)}$, we have that $\lim_{\theta \rightarrow 0}\tilde{%
\kappa}(\theta )=+\infty$ and $\lim_{\theta \rightarrow +\infty }\tilde{%
\kappa}(\theta )=0$.  For further details on the Sonine theory and on general
fractional calculus in this setting, see \cite{KOC}, \cite{KOC2}, \cite{SAM} and \cite{LUC2}.

\begin{remark}
It is well-known that $1/\varphi(\theta)$ coincides with the Laplace transform of the
potential measure $U(\cdot)$ of the subordinator with Laplace exponent $\varphi$ (see \cite{BER}).
Since, by assumption, $\varphi(\cdot)$ is complete Bernstein, also its conjugate $\varphi^{*}(\theta)=\theta/\varphi(\theta)$ is complete Bernstein (by Prop. 7.1 in \cite{SCH}). Therefore, both $\varphi$ and $\varphi^{*}$ are special Bernstein functions; the related subordinators are, consequently, special and possess non-decreasing potential densities on $(0,+\infty)$. In particular, $U(\cdot)$ has density equal to
\begin{equation}
u(t)=\frac{1}{\int_{0}^{+\infty}x \overline{\nu}(x)dx}+\nu^{*}(t),  \label{star}
\end{equation}
where $\nu^{*}(t):=\int_{t}^{+\infty} \overline{\nu}^{*}(x)dx$ and $\overline{\nu}^{*}(\cdot)$ is the L\'{e}vy density related to $\varphi^{*}(\theta)$
(see \cite{SCH}, p.162).
Thus, when the mean of $\overline{\nu}(\cdot)$ is infinite, the potential density coincides with the tail density and  $\kappa(\cdot)=\nu^{*}(\cdot).$
\end{remark}

\vspace{0.5cm}

We recall now the Young's inequality and we prove some preliminary
results; the latter are needed for defining properly the
GFOs by means of convolutions and
interpreting them as tempered distribution (in order to get its
Fourier transform); the main references are \cite{BOG}, \cite{KAT}
and \cite{RUD}.

\begin{lem}[Young's inequality, \cite{BOG}] 
Let $1\leq p,q,r\leq \infty$ such that $1/p+1/q=1+1/r$; let $f\in L^{p}(\mathbb{R^d})$ and $g\in L^{q}(\mathbb{R^d}%
). $ Then the
function \[(f \star g)(x)=\int_{\mathbb{R}}f(x-y)g(y)dy\] exists, a.e.~$x\in \mathbb{R},$ and $f\star g \in L^r(\R)$.
Moreover, the following inequality holds
\begin{equation*}
\Vert f\star g\Vert _{r}\leq \Vert f\Vert _{p}\Vert g\Vert _{q}.
\end{equation*}
\end{lem}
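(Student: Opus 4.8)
This is the classical Young convolution inequality, and the plan is to reproduce its standard proof via the three-exponent Hölder inequality. Since $|(f\star g)(x)|\le (|f|\star|g|)(x)$, I may assume $f,g\ge 0$ and it suffices to bound $|f|\star|g|$: once this function is shown to lie in $L^{r}(\R)$, it is finite for a.e.\ $x$, so the integral defining $f\star g$ converges absolutely a.e., yielding both the claimed existence and, by the same estimate, the norm bound. Before the main argument I dispose of the endpoint $r=\infty$, where the hypothesis reads $1/p+1/q=1$; here a single application of Hölder's inequality in $y$ gives the uniform bound $|(f\star g)(x)|\le \Vert f(x-\cdot)\Vert_{p}\,\Vert g\Vert_{q}=\Vert f\Vert_{p}\Vert g\Vert_{q}$, which is exactly the conclusion.

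For $1\le p,q\le r<\infty$ — and note that $p\le r$, $q\le r$ are forced by the hypothesis, since $1/q\le 1$ gives $1/p\ge 1/r$ — I introduce auxiliary exponents $s,t$ defined by $1/s:=1/p-1/r\ge 0$ and $1/t:=1/q-1/r\ge 0$. Using $1/p+1/q=1+1/r$ one checks the key relation $1/r+1/s+1/t=1$, so $(r,s,t)$ is an admissible triple for the generalized Hölder inequality (when $p=r$ or $q=r$ the corresponding exponent is $\infty$ and its factor below reduces to a constant, so the argument degenerates harmlessly). The heart of the proof is the algebraic splitting of the integrand,
\[ f(x-y)\,g(y)=\big(f(x-y)^{p}\,g(y)^{q}\big)^{1/r}\cdot f(x-y)^{\,1-p/r}\cdot g(y)^{\,1-q/r}, \]
whose three factors, viewed as functions of $y$, have integrability exponents exactly $r$, $s$, $t$, because $s(1-p/r)=p$ and $t(1-q/r)=q$.

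Applying the generalized Hölder inequality in $y$ to this product gives the pointwise estimate $(f\star g)(x)\le \big(\int f(x-y)^{p}\,g(y)^{q}\,dy\big)^{1/r}\,\Vert f\Vert_{p}^{\,p/s}\,\Vert g\Vert_{q}^{\,q/t}$, where I used translation invariance of Lebesgue measure to evaluate the second and third factors. I then raise both sides to the power $r$, integrate in $x$, and apply Tonelli's theorem to the nonnegative kernel to factor $\int\!\int f(x-y)^{p}\,g(y)^{q}\,dy\,dx=\Vert f\Vert_{p}^{p}\Vert g\Vert_{q}^{q}$. Collecting exponents and using $r/s+1=r/p$ and $r/t+1=r/q$ reduces the right-hand side to $\Vert f\Vert_{p}^{r}\Vert g\Vert_{q}^{r}$, i.e.\ $\Vert f\star g\Vert_{r}\le \Vert f\Vert_{p}\Vert g\Vert_{q}$. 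The only real obstacle is the bookkeeping: choosing the splitting so that the three exponents align, verifying $1/r+1/s+1/t=1$, and justifying Tonelli (which requires only joint measurability of $(x,y)\mapsto f(x-y)g(y)$, inherited from measurability of $f,g$, together with nonnegativity). None of these is deep, but the exponent arithmetic must be carried out carefully, and the degenerate cases $p=1$, $q=1$, $p=r$, $q=r$ should be checked to confirm that the factors and exponents remain meaningful.
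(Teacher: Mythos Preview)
Your argument is correct and is precisely the standard proof of Young's convolution inequality via the three-exponent H\"older inequality; the exponent bookkeeping, the treatment of the endpoint $r=\infty$, and the appeal to Tonelli are all in order. The paper does not actually prove this lemma: it is stated with a citation to Bogachev and used as a black box, so there is no ``paper's own proof'' to compare against --- your write-up simply supplies the classical argument that the authors take for granted.
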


\begin{lem}
    \label{lemdefintegrator} Let $p_{1}\geq 1$ and $p_{2}\geq 1$ and let $g :%
    \mathbb{R}\rightarrow \mathbb{R^{+}}$ be such that $g(x)=0$, for $x\leq0$, and $g \in
    L^{p_{1}}(0,\epsilon )\cap L^{p_{2}}(\epsilon ,\infty )$, for any $\epsilon
    >0$. Then the operators
    \begin{equation*}
        I_{-}^{(g) }f(x):=(f\star \breve{g})(x),\quad \text{ a.e. }x\in \mathbb{R},
    \end{equation*}
    and
    \begin{equation*}
         I_{+}^{(g)}f(x):=(f\star g)(x),\quad \text{ a.e. }x\in \mathbb{R},
    \end{equation*}%
  where $\breve{g}(y):=g (-y)$,  are  well-defined for $f\in L^{1}(\mathbb{R})$.%
    \newline
\end{lem}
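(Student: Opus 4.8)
The plan is to reduce the claim to a double application of Young's inequality (Lemma~\ref{lemdefintegrator} is what we want; Young is the preceding Lemma, i.e.\ Lemma~2.1), with the exponent $p=1$ supplied by $f\in L^{1}(\R)$. The only genuine issue is that $g$ is \emph{not} assumed to belong to a single global space $L^{q}(\R)$; rather, it has possibly different integrability near the origin (where an integrable singularity is allowed) and near infinity. I would therefore first split $g$ at the threshold $\epsilon$ that already appears in the hypothesis, turning each piece into an honest $L^{p}$-function on the whole line.

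Fix any $\epsilon>0$ and write $g=g_{1}+g_{2}$, where $g_{1}:=g\,\mathbf{1}_{(0,\epsilon)}$ and $g_{2}:=g\,\mathbf{1}_{[\epsilon,\infty)}$; both vanish on $(-\infty,0]$, since $g$ does. By hypothesis $g\in L^{p_{1}}(0,\epsilon)$ and $g\in L^{p_{2}}(\epsilon,\infty)$, so, regarded as functions on all of $\R$, we have $g_{1}\in L^{p_{1}}(\R)$ and $g_{2}\in L^{p_{2}}(\R)$.

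Next I would apply Young's inequality to each piece against $f\in L^{1}(\R)$. Choosing $p=1$ and $q=p_{1}$ forces $r=p_{1}$ (from $1/1+1/p_{1}=1+1/r$), so $f\star g_{1}$ exists a.e.\ and $f\star g_{1}\in L^{p_{1}}(\R)$; analogously, $f\star g_{2}$ exists a.e.\ and $f\star g_{2}\in L^{p_{2}}(\R)$. Consequently
\[
I_{+}^{(g)}f=f\star g=f\star g_{1}+f\star g_{2}
\]
is defined off a Lebesgue-null set (the union of the two exceptional sets), which proves that $I_{+}^{(g)}f$ is well-defined a.e.\ for every $f\in L^{1}(\R)$.

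For $I_{-}^{(g)}$ I would run the identical decomposition on the reflection $\breve{g}$. Since $\breve{g}(y)=g(-y)$, this function is supported on $(-\infty,0)$, and the pieces $\breve{g_{1}},\breve{g_{2}}$ inherit the memberships $\breve{g_{1}}\in L^{p_{1}}(\R)$ and $\breve{g_{2}}\in L^{p_{2}}(\R)$, because reflection is an isometry of each $L^{p}$. The same Young estimates then give that $f\star\breve{g}$ exists a.e., i.e.\ $I_{-}^{(g)}f$ is well-defined. I do not expect any real obstacle here: the entire content lies in recognizing that the local/tail integrability hypothesis is precisely what is needed to split $g$ into two genuine $L^{p}$-functions, after which Young's inequality with $p=1$ does the rest; the choice of $\epsilon$ is immaterial, as any fixed $\epsilon>0$ yields the same conclusion.
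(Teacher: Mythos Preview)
Your proof is correct and follows essentially the same approach as the paper: split $g$ at the threshold $\epsilon$ into $g_{1}\in L^{p_{1}}(\R)$ and $g_{2}\in L^{p_{2}}(\R)$, then apply Young's inequality with $p=1$ to each piece. The paper treats $I_{-}^{(g)}$ first (with the reflections $\breve{g}_{1},\breve{g}_{2}$) and makes the absolute-value bound $|I_{-}^{(g)}f(x)|\le (|f|\star\breve{g}_{1})(x)+(|f|\star\breve{g}_{2})(x)$ explicit, but the content is identical to yours.
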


\begin{proof}
   We give the proof only for $I_{-}^{(g)}$, since for $I_{+}^{(g)}$ it is similar.
For $\epsilon >0,$ we define $g _{1}(y):=g (y)1_{(0,\epsilon )}(y)$ and $%
g _{2}(y):=g (y)1_{(\epsilon ,\infty )}(y)$, so that $g _{1}\in L^{m}(\mathbb{R}%
) $ for $m\in \lbrack 1,p_{1}]$ and $g _{2}\in
L^{p_{2}}(\mathbb{R})$. By
the Young's inequality, we have that, for any $f\in L^{1}(\mathbb{R})$, $%
(f\star \breve{g}_{1})(x)$ is well-defined for $x\in
\mathbb{R}/\Omega
_{1} $ and $(f\star \breve{g}_{2})(x)$ is well-defined for $x\in \mathbb{R}%
/\Omega _{2}$ such that $Leb(\Omega _{1})=0$ and $Leb(\Omega _{2})=0$.%
\newline
Moreover, for each $\epsilon >0$ and $x\in \mathbb{R}/(\Omega _{1}\cup
\Omega _{2}),$
\begin{eqnarray*}
|I_{-}^{(g) }f(x)| &\leq&\int_{\mathbb{R}}|f(x-y)|g (-y)dy \\
&=&\int_{-\infty }^{-\epsilon }|f(x-y)|g (-y)dy+\int_{-\epsilon
}^{0}|f(x-y)|g (-y)dy \\
&=&\int_{\mathbb{R}}|f(x-y)|g
_{1}(-y)dy+\int_{\mathbb{R}}|f(x-y)|g
_{2}(-y)dy \\
&=&(|f|\star \breve{g}_{1})(x)+(|f|\star \breve{g}_{2})(x)<\infty
,
\end{eqnarray*}%
by the Young's inequality.\newline
\end{proof}

    \begin{remark}\label{rem:FractIntegralSmooth}
        We highlight that, if we consider the Schwartz space of test functions $\mathcal{S}:=\mathcal{S}(\mathbb{R})$ and $f \in \mathcal{S}$,
        then $I^{(g)}_{\pm}f \in C^{\infty}(\mathbb{R})$, see Theorem 7.19 in \cite{RUD}.
    \end{remark}

\begin{remark}
By the embedding properties of the $L^{p}$ spaces and the hypothesis $%
g _{1}\in L^{p_1}(0,\epsilon )$, we have that $g_1 \in L^{m}(0,\epsilon)$%
, for $m\in \lbrack 1,p_{1}]$. In particular, $g _{1}\in L^{1}(0,\epsilon)$%
. Furthermore, all the examples we give in Section \ref{SubSec:
SpecialCases} are such that, when the function $g$ is the tail of
the L\'{e}vy density, the associated function $g_{2}$ is in
$L^{n}(\epsilon ,\infty ),$
for $n\in \lbrack p_{2},\infty ] $. However, we are assuming that $%
g_2 \in L^{p_2} (\epsilon, \infty)$.

Finally, we note that the hypothesis $g \in L^{p_{1}}(0,\epsilon
)\cap L^{p_{2}}(\epsilon ,\infty )$ for $p_1, p_2 \geq 1$ is
mandatory because $ g $ does not belong necessarily to
$L^{r}(\mathbb{R})$, for any $r\geq 1$ (e.g.
$g(s)=s^{(\alpha-1)/2}/\Gamma((1+\alpha)/2)$, for $\alpha \in
(0,1)$).
\end{remark}

\subsection{The generalized fractional derivative}

We define the generalization of the Riemann-Liouville derivatives, given in (\ref{rl})-(\ref{rl2}), as the
derivatives, in distributional sense, of the operators $I_{-}^{(g) }$ and $I_{+}^{(g) }$.

\begin{definition}
\label{defder} Let $\varphi$ be a Bernstein function and $\nu $ be the tail L\'{e}%
vy density associated to $\varphi ;$ if $\nu :%
    \mathbb{R}\rightarrow \mathbb{R^{+}}$ is such that $\nu(x)=0$, for $x\leq0$, and $\nu \in
    L^{p_{1}}(0,\epsilon )\cap L^{p_{2}}(\epsilon ,\infty )$, for any $\epsilon
    >0$, then, for $f\in
L^{1}(\mathbb{R})$, we define the generalized  right-sided 
Riemann-Liouville fractional derivative as follows
\begin{equation}
(\mathcal{D}_{-}^{(\nu )}f)(x):=-\frac{d}{dx}(I_{-}^{(\nu )}f)(x)=-\frac{d}{%
dx}\int_{x}^{+\infty }f(t)\nu (t-x)dt,\quad \text{a.e. }x\in \mathbb{R},
\label{gg2}
\end{equation}%
 and the generalized left-sided Riemann-Liouville
fractional derivative as follows
    \begin{equation}\label{eqt:LeftSidedRLDerivative}
        (\mathcal{D}_{+}^{(\nu )}f)(x):=\frac{d}{dx}(I_{+}^{(\nu) }f)(x)=\frac{d}{dx}%
        \int_{-\infty}^{x}f(t)\nu (x-t)dt,\quad \text{a.e. }x\in \mathbb{R},
    \end{equation}%
where the differential operator is used in distributional sense.
\newline
\end{definition}

Analogously, the Weyl-type derivatives, given in (\ref{wr})-(\ref{wr2}), can be generalized, to the domain $ W^{1,1}(%
\mathbb{R})$:

\begin{equation*}
W^{1,1}(\mathbb{R})=\{ f \in L^1(\mathbb{R}) | \, Df \in L^1(\mathbb{R}) \},
\end{equation*}
where $D$ is the weak derivative, by the following operators

\begin{equation}
(\mathfrak{D}_{-}^{(\nu )}f)(x):=-(I_{-}^{(\nu) }\frac{d}{dx}%
f)(x)=-\int_{0}^{+\infty }\frac{\partial }{\partial x}f(x+t)\nu (t)dt,\quad
\text{a.e. }x\in \mathbb{R}  \label{weyl}
\end{equation}%
and
    \begin{equation}
        (\mathfrak{D}_{+}^{(\nu )}f)(x):=\left(I_{+}^{(\nu) }\frac{d}{dx}%
        f\right)(x)=\int_{0}^{+\infty }\frac{\partial }{\partial x}f(x-t)\nu (t)dt,\quad
        \text{a.e. }x\in \mathbb{R},  \label{weyl2}
    \end{equation}%
for $f \in W^{1,1}(%
\mathbb{R})$.
The previous definitions have been introduced in \cite{TOA}. For other
references on general fractional calculus with different approaches, see
\cite{CHE}, \cite{LUC}, \cite{KOC} and \cite{KOC2}.

In order to compute the Fourier transform of
$\mathcal{D}_{\pm}^{(\nu )}$, we introduce the definition of the convolution for
distributions, see Section 7 in \cite{RUD}. Let us denote by $\mathcal{S}%
^{\prime }:=\mathcal{S}^{\prime }(\mathbb{R})$ the dual space of
$\mathcal{S}$, i.e. the space of tempered distributions.

    \begin{definition}\label{DefDist}
        Let $f \in L^{p}(\mathbb{R})$ for $p \in [1, +\infty]$. We define the distribution $u_f \in \mathcal{S}'$ as follows
        \[ u_f(q):=\int_{\mathbb{R}}f(y)q(y)dy, \quad q \in \mathcal{S}. \]
        Furthermore, let   $q \in \mathcal{S}$ and $u_f \in \mathcal{S}'$, then their convolution is defined as
        \[  (u_f \star q)(x)=\int_{\mathbb{R}}f(t)q(x-t)dt,\quad  x \in \mathbb{R}, \]
(see Def.~7.8 in \cite{RUD}).\end{definition}

\begin{lem}\label{lem2.3}
Let $p_{1}\geq 1$ and $p_{2}\geq 1$ and let $\nu :%
    \mathbb{R}\rightarrow \mathbb{R^{+}}$ be such that $\nu(x)=0$  for $x\leq0$, and $\nu \in
    L^{p_{1}}(0,\epsilon )\cap L^{p_{2}}(\epsilon ,\infty )$, for any $\epsilon
    >0$. Then the Fourier transform of $%
    \mathcal{D}_{-}^{(\nu )}q$ and $%
    \mathcal{D}_{+}^{(\nu )}q$ are given, respectively, by
    \begin{enumerate}

        \item[i)]
        \begin{equation} \label{eqt:FourierTransformDerivativeLeftSide}
            \mathcal{F}\left( \mathcal{D}_{-}^{(\nu )}q\right) (\xi ):=\int_{\mathbb{R%
            }}e^{i\xi x}\left( \mathcal{D}_{-}^{(\nu )}q\right) (x)dx=\varphi(i\xi )%
            \widehat{q}(\xi ),\qquad \xi \in \mathbb{R},
        \end{equation}
            \item[ii)]
            \begin{equation} \label{eqt:FourierTransformDerivativeRightSide}
                \mathcal{F}\left( \mathcal{D}_{+}^{(\nu )}q\right) (\xi ):=\int_{\mathbb{R%
                }}e^{i\xi x}\left( \mathcal{D}_{+}^{(\nu )}q\right) (x)dx=\overline{\varphi(i\xi )}%
                \widehat{q}(\xi ),\qquad \xi \in \mathbb{R},
            \end{equation}%
where $q \in \mathcal{S}(\mathbb{R})$ and $\widehat{q}(\xi ):=\mathcal{F}\left( q\right) (\xi ).$
    \end{enumerate}
\end{lem}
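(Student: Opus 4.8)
The plan is to reduce both identities to three ingredients: the convolution theorem for a tempered distribution paired with a Schwartz function, the distributional differentiation rule for the Fourier transform, and a single computation of the Fourier transform of the tail density $\nu$. Throughout I use the convention $\mathcal{F}(g)(\xi)=\int_{\mathbb{R}}e^{i\xi x}g(x)\,dx$, under which integration by parts gives $\mathcal{F}(\tfrac{d}{dx}T)(\xi)=-i\xi\,\mathcal{F}(T)(\xi)$ for $T\in\mathcal{S}'$.

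First I would record that $\nu$ defines a tempered distribution $u_\nu$. Splitting $\nu=\nu\mathbf{1}_{(0,\epsilon)}+\nu\mathbf{1}_{(\epsilon,\infty)}$, the first summand lies in $L^{1}(0,\epsilon)$ by (\ref{dd}) and the embedding of $L^{p_1}$, while the second lies in $L^{p_2}(\epsilon,\infty)$; since every $L^{p}$ function induces an element of $\mathcal{S}'$ via Definition \ref{DefDist}, so does $\nu$. By Lemma \ref{lemdefintegrator} and Remark \ref{rem:FractIntegralSmooth}, $I^{(\nu)}_{+}q=u_\nu\star q$ is a smooth, slowly increasing function for $q\in\mathcal{S}$, so its distributional Fourier transform obeys $\mathcal{F}(u_\nu\star q)=\widehat{q}\,\mathcal{F}(u_\nu)$. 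Combining this with the definitions (\ref{gg2}) and (\ref{eqt:LeftSidedRLDerivative}) and the differentiation rule yields
\[
\mathcal{F}(\mathcal{D}_{+}^{(\nu)}q)(\xi)=-i\xi\,\widehat{q}(\xi)\,\mathcal{F}(u_\nu)(\xi),\qquad
\mathcal{F}(\mathcal{D}_{-}^{(\nu)}q)(\xi)=i\xi\,\widehat{q}(\xi)\,\mathcal{F}(u_{\breve\nu})(\xi),
\]
where $\breve\nu(y)=\nu(-y)$ and, by the substitution $s=-y$, $\mathcal{F}(u_{\breve\nu})(\xi)=\mathcal{F}(u_{\nu})(-\xi)$.

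The core step is to identify $\mathcal{F}(u_\nu)$ with the boundary value of the Laplace transform of $\nu$. Since $\nu\notin L^{1}(\mathbb{R})$ in general, I cannot simply substitute $\theta=-i\xi$ into (\ref{tr}); instead I would regularize. For $\theta>0$ the function $e^{-\theta s}\nu(s)$ belongs to $L^{1}(0,\infty)$, with ordinary Fourier transform $\int_0^\infty e^{(i\xi-\theta)s}\nu(s)\,ds=\tilde\nu(\theta-i\xi)=\varphi(\theta-i\xi)/(\theta-i\xi)$ by (\ref{tr}). Letting $\theta\downarrow 0$, dominated convergence gives $e^{-\theta\,\cdot}\nu\to\nu$ in $\mathcal{S}'$, so by continuity of $\mathcal{F}$ on $\mathcal{S}'$ together with the continuity of the analytic extension of $\varphi$ up to the imaginary axis (Proposition 3.6 in \cite{SCH}) I obtain
\[
\mathcal{F}(u_\nu)(\xi)=\frac{\varphi(-i\xi)}{-i\xi},\qquad
\mathcal{F}(u_{\breve\nu})(\xi)=\frac{\varphi(i\xi)}{i\xi},
\]
as identities in $\mathcal{S}'$; the apparent pole at $\xi=0$ is harmless, as it is cancelled once we multiply by the factor $i\xi$ coming from the derivative.

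Substituting these expressions then cancels the factor $i\xi$ and gives $\mathcal{F}(\mathcal{D}_{+}^{(\nu)}q)(\xi)=\varphi(-i\xi)\widehat q(\xi)$ and $\mathcal{F}(\mathcal{D}_{-}^{(\nu)}q)(\xi)=\varphi(i\xi)\widehat q(\xi)$; the first equals $\overline{\varphi(i\xi)}\widehat q(\xi)$ by the Schwarz reflection $\varphi(-i\xi)=\varphi(\overline{i\xi})=\overline{\varphi(i\xi)}$, valid since $\varphi$ is real-valued and analytic on the positive axis. Because $\varphi(\pm i\xi)$ grows at most polynomially while $\widehat q$ is rapidly decreasing, the right-hand sides are genuine integrable functions, so the stated pointwise formulas for $\mathcal{F}(\mathcal{D}_{\pm}^{(\nu)}q)$ are legitimate. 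The main obstacle I anticipate is precisely the identification of $\mathcal{F}(u_\nu)$: one must justify the passage $\theta\downarrow 0$ in $\mathcal{S}'$ rather than pointwise, since the Fourier integral of $\nu$ is only conditionally convergent, and the regularization above, combined with the continuity of $\varphi$ on the closed half-plane, is what makes this rigorous.
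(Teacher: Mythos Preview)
Your proof is correct and follows the same overall architecture as the paper: view $\nu$ as a tempered distribution, apply the convolution formula $\mathcal{F}(u_\nu\star q)=\widehat{q}\,\mathcal{F}(u_\nu)$ from \cite{RUD}, identify $\mathcal{F}(u_\nu)$ with $\varphi(-i\xi)/(-i\xi)$, and finish with the differentiation rule.

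The one genuine methodological difference lies in how the Fourier transform of $\nu$ is computed. The paper splits $\nu=\nu_1+\nu_2$ with $\nu_1\in L^1(\mathbb{R})$ and $\nu_2\in L^{p_2}\cap L^\infty(\mathbb{R})$ (the $L^\infty$ bound coming from monotonicity of the tail density), so that each piece has an integral Fourier representation on $\mathcal{S}$; the sum of the two integrals is then evaluated directly as $\int_0^\infty e^{-i\xi y}\nu(y)\,dy=\varphi(i\xi)/(i\xi)$ by formal substitution into (\ref{tr}). You instead regularize with $e^{-\theta s}\nu(s)\to\nu$ in $\mathcal{S}'$ and pass to the limit using continuity of $\varphi$ on the closed half-plane. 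Your route is slightly more explicit about why the analytic continuation to the imaginary axis is legitimate, while the paper's splitting makes the existence of the Fourier integral itself more transparent; both reach the same identity and neither requires ideas absent from the other.
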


\begin{proof}
   We prove equation \eqref{eqt:FourierTransformDerivativeLeftSide} in details and we give only some
   hints for equation \eqref{eqt:FourierTransformDerivativeRightSide}.\\
    \begin{itemize}
        \item[i)]
        We first prove the existence of the Fourier transform of the integral
        operator defined in Lemma \ref{lemdefintegrator}; to this aim, we need to
        divide the tail measure into two parts so that each one is in $L^{p}(\mathbb{%
            R})$, for some $p\geq1$. Under these assumptions, we have both the existence of the two parts as
        convolution  and the existence and integral representation of their
        Fourier transform.\newline
Let $\breve{\nu_1}(y):=\nu(-y)1_{(0,\epsilon)}(-y)$ and $\breve{\nu_2}(y):=\nu(-y)1_{[\epsilon, +\infty)}(-y)$; since $\breve{\nu_1} \in L^{p_1}(\mathbb{R})$ and $\breve{\nu_2} \in L^{p_2}(%
        \mathbb{R})$, we can split the operator $I^{(\nu)}_{-}$ into two parts by defining  the tempered distributions as in Def. \ref{DefDist}, so that
        \begin{equation*}
           (u_{\breve{\nu}_i}\star q)(x)=(q \star \breve{\nu}_i)(x),
        \end{equation*}
        for $q \in \mathcal{S}(\mathbb{R})$ and $i=1,2$, see Example 7.12 in \cite{RUD}. Therefore we can write that
        \begin{equation*}
            (I^{(\nu)}_{-}q)(x)=((q \star \breve{\nu}_1)(x) + (q \star \breve{\nu}%
            _2)(x))=((u_{\breve{\nu}_1} + u_{\breve{\nu}_2}) \star q)(x).
        \end{equation*}
        By Theorem 7.19 in \cite{RUD}, we have that if $q \in \mathcal{S}(\mathbb{%
            R})$, then $I^{(\nu)}_{-}q \in C^{\infty}(\mathbb{R})$ and
        \begin{equation*}
            \mathcal{F}(I^{(\nu)}_{-}q)(\xi)=\int_{\mathbb{R}}e^{i\xi x}((u_{\breve{\nu%
                }_1} + u_{\breve{\nu}_2}) \star q)(x)dx=\int_{\mathbb{R}}e^{i\xi x}(
                \breve{\nu}_1 + \breve{\nu}_2)(x)dx\int_{\mathbb{R}}e^{i\xi
                x}q(x)dx.
        \end{equation*}
Moreover, under the assumptions on $\nu(\cdot)$, we have that $\nu_1 \in L^{1}(\mathbb{R})$ and $\nu_2\in L^{p_2}(\R)\cap L^{\infty}(\R)$, where we recall that $\nu_{1}(y):=\nu (y)1_{(0,\epsilon )}(y)$ and $%
\nu _{2}(y):=\nu (y)1_{(\epsilon ,\infty )}(y)$, $\epsilon >0$.  Thus, both $u_{\breve{\nu}_1}$ and $u_{\breve{\nu}_2}$
        have an integral representation on $\mathcal{S}(\mathbb{R})$ and we are
        able to compute explicitly the Fourier transform of the distributions $u_{%
            \breve{\nu}_1}$ and $u_{\breve{\nu}_2}$, see Ch.~VI, Sect.~4 in \cite{KAT}.
            
       Their Fourier transforms exist
        singularly:

        \begin{eqnarray*}
            \int_{\mathbb{R}}e^{i\xi x}(\breve{\nu}_1 + \breve{\nu}_2)(x)dx&=&
            \int_{\mathbb{R}}e^{i\xi x}\breve{\nu}_1(x)dx + \int_{\mathbb{R}}e^{i\xi x}%
            \breve{\nu}_2(x)dx \\
            &\overset{*}{=}&\int_0^\infty e^{-i\xi y}\nu(y)dy \\
            &\overset{**}{=}&\frac{\varphi(i \xi)}{i \xi},
        \end{eqnarray*}
        for $\xi \neq 0$, since $\nu_1(\cdot) + \nu_2(\cdot)=\nu(\cdot)$ and we have used the change of variable $y=-x$ in $*$ and equation~\eqref{tr} in $**$. 
        Formula (\ref{eqt:FourierTransformDerivativeLeftSide}) follows by
        considering Def. \ref{defder} and
        recalling that $\mathcal{F}\left(\frac{d}{dx}f \right)(\xi)=(-i\xi)\mathcal{F%
        }\left(f \right)(\xi)$.

        \item[ii)] In order to prove \eqref{eqt:FourierTransformDerivativeRightSide}, we have that the existence follows from
        similar considerations as above and the following lines are needed:

            \begin{eqnarray*}
                \int_{\mathbb{R}}e^{i\xi x}(\nu_1 + \nu_2)(x)dx&=&
                \int_{\mathbb{R}}e^{i\xi x}\nu_1(x)dx + \int_{\mathbb{R}}e^{i\xi x}%
                \nu_2(x)dx \\
                &=&\int_0^\infty e^{i\xi x}\nu(x)dx \\
                &\overset{*}{=}&\overline{\left(\frac{\varphi(i \xi)}{i \xi}\right)},
            \end{eqnarray*}
            where we use ~\eqref{tr} in $*$.

    \end{itemize}
\end{proof}
If we choose $\varphi(x)=x^{\beta },$ $\beta \in (0,1)$, the generalized Riemann-Liouville derivatives defined in Def.\ref{defder}
coincide with the fractional derivatives of order $\beta$ (given in
(\ref{rl})-(\ref{rl2})); in this special case, the tail L\'{e}vy density is given by $\nu (s)=s^{-\beta }/\Gamma (1-\beta )$ and $\mathcal{F}\left( \mathcal{D}%
_{\pm}^{(\nu )}f\right) (\xi )=(\mp i\xi )^{\beta }\widehat{f}(\xi ).$

If $\nu \in L^1_{loc}(\mathbb{R})$, its Fourier
transform exists in the distributional sense, but the associated
distribution is defined on $\mathcal{D}(\mathbb{R})$, where
$\mathcal{D}(\mathbb{R})=\{ q \in C^{\infty}(\mathbb{R})| \,
Supp(q) \text{ is compact}\}$.

\begin{remark}\label{Rem}
We note that, on the Schwartz space $\mathcal{S}(\mathbb{R})$, the operators $\mathcal{%
D}_{\pm}^{(\nu )}$ coincide with the Weyl-type derivatives $\mathfrak{D}%
_{\pm}^{(\nu )}$ defined in (\ref{weyl})-(\ref{weyl2}), as can be checked by comparing (\ref%
{eqt:FourierTransformDerivativeLeftSide})-(\ref%
{eqt:FourierTransformDerivativeRightSide}) with Lemma 2.9 in
\cite{TOA}.
\end{remark}

\

We now derive the general fractional derivative of the indicator
function and we prove that it belongs to
$L^{2}(\mathbb{R})$. To this aim, in view of what follows we restrict ourselves to the right-sided case.

Even though the definition of $\mathcal{D}_{-}^{(\nu )}$ is given on $L^{1}(%
\mathbb{R})$, its Fourier transform, given in formula (\ref{eqt:FourierTransformDerivativeLeftSide}), holds for $%
f\in \mathcal{S}(\mathbb{R})$. We extend it to the indicator
function in distributional sense. We will denote hereafter the positive part as usual, i.e. 
\begin{equation*}
(x)_{+}:=\left\{
\begin{array}{c}
x,\quad x>0 \\
0,\quad x\leq 0%
\end{array}%
\right . .
\end{equation*}

\begin{theorem}
\label{thmFourierconvolution} Let $a,b\in \mathbb{R}$ such that $a<b$, then
the generalized fractional derivative of the indicator function of the
interval $(a,b)$ is given by
\begin{equation}
(\mathcal{D}_{-}^{(\nu )}1_{(a,b)})(x)=\nu ((b-x)_{+})-\nu ((a-x)_{+}),
\quad \text{a.e.}\, x\in \mathbb{R}.  \label{gg3}
\end{equation}%
Moreover, we have that
\begin{equation}
\mathcal{F}\left( \mathcal{D}_{-}^{(\nu )}1_{(a,b)}\right) (\xi
)=\varphi(i\xi ) \widehat{1_{(a,b)}}(\xi )=\varphi(i\xi
)\frac{e^{ib\xi }-e^{ia\xi }}{i\xi }, \label{ff}
\end{equation}%
in distributional sense.
\end{theorem}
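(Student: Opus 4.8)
The plan is to treat the two displayed formulas separately: first I would establish the a.e.\ identity (\ref{gg3}) by evaluating the integral operator $I_{-}^{(\nu)}$ on $1_{(a,b)}$ and then differentiating, and afterwards deduce the Fourier transform (\ref{ff}) by transforming the right-hand side of (\ref{gg3}) directly.

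For (\ref{gg3}), I would start from the definition in (\ref{gg2}) and write
\[
(I_{-}^{(\nu)}1_{(a,b)})(x)=\int_{x}^{+\infty}1_{(a,b)}(t)\,\nu(t-x)\,dt .
\]
With the substitution $u=t-x$, and recalling that in Definition \ref{defder} the tail density is extended by $\nu\equiv 0$ on $(-\infty,0]$, this reduces to $\int_{(a-x)_{+}}^{(b-x)_{+}}\nu(u)\,du=N((b-x)_{+})-N((a-x)_{+})$, where $N(u):=\int_{0}^{u}\nu(s)\,ds$. At this point I must check that $N$ is well defined and locally absolutely continuous: this follows because $\nu\in L^{1}_{loc}(\mathbb{R})$, using $\int_{0}^{1}\nu<\infty$ from (\ref{dd}) near the origin and the monotonicity of $\nu$ away from it. Differentiating in the distributional sense, and using $N'=\nu$ a.e.\ together with the chain rule for the Lipschitz map $x\mapsto(c-x)_{+}$ (whose derivative is $-1_{\{x<c\}}$), I obtain
\[
-\frac{d}{dx}\big[N((b-x)_{+})-N((a-x)_{+})\big]=\nu((b-x)_{+})-\nu((a-x)_{+}),
\]
with the convention $\nu(0)=0$ inherited from the extension. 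Since the composed functions are locally absolutely continuous, the distributional derivative coincides with the classical a.e.\ one and nothing special happens at the break-points $x=a,b$; this is exactly (\ref{gg3}), which one can verify region by region (on $x<a$, $a<x<b$, $x>b$) to see that the convention $\nu(0)=0$ makes all three cases consistent.

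For (\ref{ff}), I would Fourier-transform (\ref{gg3}) termwise. The function on its right-hand side is locally integrable and decays at $\pm\infty$, hence is a tempered distribution, so its Fourier transform is well defined; however each summand $\nu((c-\cdot)_{+})$ is in general neither in $L^{1}$ nor in $L^{2}$, so the transform is not an absolutely convergent integral and must be understood in $\mathcal{S}'$. To make this rigorous I would regularise: for $\epsilon>0$,
\[
\int_{\mathbb{R}}e^{i\xi x}e^{-\epsilon(c-x)_{+}}\nu((c-x)_{+})\,dx
=e^{ic\xi}\int_{0}^{+\infty}e^{-(\epsilon+i\xi)u}\nu(u)\,du
=e^{ic\xi}\,\frac{\varphi(\epsilon+i\xi)}{\epsilon+i\xi},
\]
where the first equality is the substitution $u=c-x$ and the second is (\ref{tr}) evaluated at $\epsilon+i\xi$ in the open right half-plane. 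Letting $\epsilon\to0^{+}$ and using the continuity of $\varphi$ on $\overline{\mathcal{H}}$ (granted by the extension through Proposition 3.6 in \cite{SCH}), the pairing against any test function passes to the limit by dominated convergence, identifying $\mathcal{F}(\nu((c-\cdot)_{+}))(\xi)=e^{ic\xi}\varphi(i\xi)/(i\xi)$ for $\xi\neq0$.

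Taking $c=b$ and $c=a$ and subtracting then yields
\[
\mathcal{F}\big(\mathcal{D}_{-}^{(\nu)}1_{(a,b)}\big)(\xi)=\varphi(i\xi)\,\frac{e^{ib\xi}-e^{ia\xi}}{i\xi},
\]
and recognising $\frac{e^{ib\xi}-e^{ia\xi}}{i\xi}=\int_{a}^{b}e^{i\xi x}\,dx=\widehat{1_{(a,b)}}(\xi)$ closes (\ref{ff}). The main obstacle is precisely this second step: since $\nu((c-\cdot)_{+})$ typically lies outside every $L^{r}(\mathbb{R})$, the Fourier transform exists only distributionally, and the computation is legitimate solely through the $\epsilon$-regularisation, where the validity of (\ref{tr}) on the open half-plane and the continuity of $\varphi$ up to the imaginary axis do the essential work. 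A secondary but necessary care is the bookkeeping with the positive parts together with the convention $\nu(0)=0$, which is what reconciles (\ref{gg3}) across the three regions.
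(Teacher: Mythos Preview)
Your argument is correct and reaches the same conclusions, but along a different path from the paper's. For (\ref{gg3}) the paper does not compute $I_{-}^{(\nu)}1_{(a,b)}$ and then differentiate; instead it invokes the Weyl-type form $\mathfrak{D}_{-}^{(\nu)}$, uses that $1_{(a,b)}\in W^{1,1}(\mathbb{R})$ with distributional derivative $\delta_a-\delta_b$, and reads off $\nu((b-x)_+)-\nu((a-x)_+)$ from the convolution with the deltas. Your route via the antiderivative $N$ and the chain rule for $x\mapsto(c-x)_+$ is more elementary and sidesteps the delta-function formalism entirely. For (\ref{ff}) the paper does not regularise: it pairs $\mathcal{D}_{-}^{(\nu)}1_{(a,b)}$ against $\mathcal{F}\eta$ for $\eta\in\mathcal{S}$, swaps the order of integration (combining the $a$- and $b$-terms first), and recognises $\int_0^\infty e^{-ity}\nu(y)\,dy=\varphi(it)/(it)$ from (\ref{tr}). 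Your $\epsilon$-regularisation makes the analytic continuation of (\ref{tr}) to the imaginary axis explicit, at the cost of having to argue that no spurious contribution survives at $\xi=0$ when the two terms are subtracted; the paper's duality argument keeps everything under an integral against $\eta$ and so never meets that issue pointwise. Both approaches are sound; yours is more hands-on, the paper's is slightly cleaner for the Fourier step.
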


\begin{proof}
Thanks to Remark \ref{Rem} and considering that $%
1_{(a,b)} \in W^{1,1}(\mathbb{R})$, for $a,b \in \mathbb{R}$ and $a<b$, we can compute directly the Weyl-type derivative of the indicator function. Since $\frac{d}{dx}1_{(a,b)}(x)=\delta_a(x)-%
\delta_b(x)$, a.e.~$x$, we have that
\begin{eqnarray*}
(\mathfrak{D}_{-}^{(\nu )}1_{(a,b)})(x)&=&-\int_{0}^{+\infty }\frac{\partial
}{\partial x}1_{(a,b)}(x+t)\nu (t)dt \\
&=&-\int_{0}^{+\infty }(\delta_a(x+t)-\delta_b(x+t))\nu (t)dt \\
&=&\nu ((b-x)_{+})-\nu ((a-x)_{+}), \qquad \text{a.e. }
x\in \mathbb{R}.
\end{eqnarray*}
It could be checked that this is equivalent to using the
Riemann-Liouville derivative for $1_{(a,b)}$ by taking a sequence
$\{q_n\} \subset \mathcal{S}$ such that $q_n \to 1_{(a,b)}$
in $L^1(\mathbb{R})$.

Let $\langle \cdot ,\cdot \rangle_{L^2}$ be the inner product of
$L^{2}$. Then $\langle \mathcal{F}(\mathcal{D}_{-}^{(\nu
)}1_{(a,b)})(\cdot),\eta \rangle_{L^2}=\langle \varphi(i\cdot
)\mathcal{F}(1_{(a,b)}),\eta \rangle_{L^2} $, for $\eta \in
\mathcal{S}$. Indeed, by the Plancherel's theorem,
\begin{eqnarray*}
\langle \mathcal{F}(\mathcal{D}_{-}^{(\nu )}1_{(a,b)}),\eta \rangle_{L^2} &=&\int_{%
\mathbb{R}}\nu ((b-x)_{+})\mathcal{F}(\eta
)(x)dx-\int_{\mathbb{R}}\nu
((a-x)_{+})\mathcal{F}(\eta )(x)dx \\
&=&\int_{-\infty }^{b}\nu ((b-x))\mathcal{F}(\eta
)(x)dx-\int_{-\infty
}^{a}\nu ((a-x))\mathcal{F}(\eta )(x)dx \\
&=&\int_{0}^{\infty }\nu
(y)\int_{\mathbb{R}}(e^{it(b-y)}-e^{it(a-y)})\eta
(t)dtdy \\
&=&\int_{\mathbb{R}}(e^{itb}-e^{ita})\frac{\varphi(it)}{it}\eta (t)dt \\
&=&\int_{\mathbb{R}}\varphi(it)\mathcal{F}(1_{a,b})(t)\eta (t)dt,
\end{eqnarray*}%
since  $(\mathcal{F}1_{(a,b)})(t)=(e^{ibt}-e^{iat})/it$.
\end{proof}

\begin{theorem}
\label{L2} Let $a,b\in \mathbb{R}$ be such that $a<b$, and let $\overline{\nu }(\cdot )$ and $\nu (\cdot )$
satisfy the following assumptions:%
\begin{enumerate}
\item[$\mathbf{(A1)}$] $\overline{\nu} \in L^{2}(1 ,+\infty )$.

\item[$\mathbf{(A2)}$] $\nu \in L^{2}(0,1)\cap L^{p}(1 ,+\infty )$, for
some $p\geq 1.$
\end{enumerate}%
Then $\mathcal{D}_{-}^{(\nu )}1_{(a,b)}\in L^{2}(\mathbb{R}).$
\end{theorem}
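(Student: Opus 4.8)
The plan is to avoid the Fourier side altogether and argue directly from the explicit pointwise formula \eqref{gg3} of Theorem~\ref{thmFourierconvolution}, namely
\[
(\mathcal{D}_{-}^{(\nu )}1_{(a,b)})(x)=\nu ((b-x)_{+})-\nu ((a-x)_{+}),\qquad\text{a.e. }x\in\R .
\]
Recalling that $\nu$ is extended by $0$ on $(-\infty,0]$ (as in Definition~\ref{defder}), the integrand vanishes for $x\ge b$, and the remaining mass splits according to the sign of $a-x$. Thus I would write $\R$ as the disjoint union of $[b,\infty)$ (where the function is $0$), the interval $[a,b)$ (where $a-x\le 0$, so the function equals $\nu(b-x)$), and $(-\infty,a)$ (where both arguments are positive, so the function equals $\nu(b-x)-\nu(a-x)$). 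It then suffices to check square-integrability on the last two pieces.

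On $[a,b)$ the change of variable $u=b-x$ reduces the contribution to $\int_{0}^{b-a}\nu(u)^{2}\,du$. The singular part near $u=0$ is controlled by $\mathbf{(A2)}$, which gives $\nu\in L^{2}(0,1)$; if $b-a>1$, the bounded remainder $(1,b-a)$ is handled by monotonicity, since $\nu$ is non-increasing with $\nu(1)=\int_{1}^{\infty}\overline{\nu}(z)\,dz<\infty$ (finiteness following from \eqref{dd}), so $\nu$ is bounded, hence square-integrable, there.

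On $(-\infty,a)$ the substitution $s=a-x$ turns the contribution into $\int_{0}^{\infty}\bigl[\nu(s+c)-\nu(s)\bigr]^{2}\,ds$ with $c=b-a>0$. Writing $\nu(s)-\nu(s+c)=\int_{s}^{s+c}\overline{\nu}(z)\,dz\ge 0$, I would split the $s$-integral at $1$. For $s\in(0,1)$ I bound $\int_{s}^{s+c}\overline{\nu}\le\nu(s)$, so this part is at most $\int_{0}^{1}\nu(s)^{2}\,ds<\infty$ by $\mathbf{(A2)}$. For $s\ge 1$ I would write $\int_{s}^{s+c}\overline{\nu}(z)\,dz=\int_{0}^{c}\overline{\nu}(s+r)\,dr$ and apply Minkowski's integral inequality:
\[
\left(\int_{1}^{\infty}\Bigl(\int_{0}^{c}\overline{\nu}(s+r)\,dr\Bigr)^{2}ds\right)^{1/2}
\le\int_{0}^{c}\left(\int_{1}^{\infty}\overline{\nu}(s+r)^{2}\,ds\right)^{1/2}dr
=\int_{0}^{c}\Bigl(\int_{1+r}^{\infty}\overline{\nu}(z)^{2}\,dz\Bigr)^{1/2}dr
\le c\,\|\overline{\nu}\|_{L^{2}(1,\infty)},
\]
where the last bound uses $1+r\ge 1$ together with $\mathbf{(A1)}$. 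Summing the two regions then yields $\mathcal{D}_{-}^{(\nu )}1_{(a,b)}\in L^{2}(\R)$.

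The step I expect to be the genuine obstacle is the far tail $x\to-\infty$, i.e. the $s\ge 1$ part: there the pointwise size $\nu(s+c)-\nu(s)$ is small but must be summed over an infinite range, and the only way to capture the needed decay is to pass to the Lévy density $\overline{\nu}$ and invoke $\mathbf{(A1)}$ via Minkowski's inequality. By contrast, the origin singularity (both in $[a,b)$ and in the $s\in(0,1)$ part) is comparatively routine, being absorbed directly by $\mathbf{(A2)}$; the only mild care required elsewhere is keeping track of the convention $\nu\equiv 0$ on $(-\infty,0]$ so that the regions glue correctly.
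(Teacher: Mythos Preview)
Your proof is correct and follows essentially the same route as the paper: both argue directly from the explicit formula \eqref{gg3}, split the line according to the signs of $b-x$ and $a-x$, absorb the origin singularity via $\mathbf{(A2)}$, and control the far tail $x\to-\infty$ by rewriting $\nu(s)-\nu(s+c)=\int_{s}^{s+c}\overline{\nu}$ and invoking $\mathbf{(A1)}$. The only cosmetic differences are that the paper cuts at $a-1$ and $a+\delta$ rather than at $a$ and $b$, and bounds the tail using the pointwise monotonicity of $\overline{\nu}$ (giving $\int_{s}^{s+c}\overline{\nu}\le c\,\overline{\nu}(s)$) in place of your Minkowski argument.
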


\begin{proof}
In order to prove that $\int_{\mathbb{R}}\left[ (\mathcal{D}_{-}^{(\nu
)}1_{(a,b)})(x)\right] ^{2}dx<\infty $, we can write that
\begin{eqnarray*}
\int_{-\infty }^{a-1}\left[ \nu (b-u)-\nu (a-u)\right] ^{2}du
&=&\int_{-\infty }^{a-1}\left[ \int_{a-u}^{b-u}\overline{\nu}(w)dw\right] ^{2}du \\
&=&\int_{0}^{\epsilon}\left[ \int_{z+1}^{b-a+z+1}\overline{\nu}(w)dw\right] ^{2}dz+\int_{\epsilon }^{+\infty}\left[ \int_{z+1}^{b-a+z+1}\overline{\nu}(w)dw\right] ^{2}dz  \\
&\leq& \int_{0}^{\epsilon}\left[ \int_{1}^{b-a+1}\overline{\nu}(w)dw\right] ^{2}dz+(b-a)^2 \int_{\epsilon }^{+\infty}\overline{\nu}(z+1)^{2} dz <\infty ,
\end{eqnarray*}%
by the complete monotonicity of $\overline{\nu}(\cdot)$ and by \textbf{(A1)}. Let now $\delta >0$ be such that $a+\delta <b,$ and let $\epsilon
:=b-a-\delta$, then, since $b-x>\epsilon $ and the tail measure is
non-increasing, we have that $\nu (b-x)^{2}\leq \nu (\epsilon )^{2}<\infty $
and we can write%
\begin{eqnarray*}
&&\int_{a-1}^{a+\delta }\left[ \nu ((b-u)_{+})-\nu ((a-u)_{+})\right] ^{2}du
\\
&\leq &\int_{a-1}^{a+\delta }\nu (b-u)^{2}du+\int_{a-1}^{a}\nu (a-u)^{2}du \\
&\leq &(1+\delta )\nu (\epsilon )^{2}+\int_{0}^{1}\nu (z)^{2}dz<\infty ,
\end{eqnarray*}%
by $\textbf{(A2)}$. Finally,

\begin{equation*}
    \int_{a+\delta}^{b} \nu (b-u)^2 du= \int_{0}^{\epsilon} \nu(x)^2 dx <\infty.
\end{equation*}
\end{proof}

We note that the hypothesis $\textbf{(A2)}$ is a requirement on $\nu $ stronger than $\nu \in L^{p}(0,\epsilon )$, with $p\geq 1$, given in Lemma \ref%
{lemdefintegrator}.\\

\subsection{The generalized fractional integral}

By the considerations of Remark \ref{remder}, in order to recover
the case of the fractional Brownian motion with Hurst parameter
$H=\alpha /2>1/2$, we now explore the situation where the process
is defined by means of a
general fractional integral, instead of the derivative $\mathcal{D}%
_{-}^{(\nu )}.$

Let us recall the definition of $\kappa (\cdot)$ given in
(\ref{ex}), for a certain Bernstein function $\varphi$, and let us assume that $\kappa(\cdot) \in L^{1}(0,\epsilon]$, for any $\epsilon>0$. 
Then we
need also to define here the function $\chi
(x):=\int_{0}^{x}\kappa (z)dz,$
for $x>0$; its Laplace transform is equal to $\widetilde{\chi }(\theta )=%
\tilde{\kappa}(\theta )/\theta =1/\theta \varphi(\theta
)=1/\tilde{\nu}(\theta
)\theta ^{2}.$

In this case, we assume the further condition on the Bernstein function $\varphi(\cdot)$ (and thus on the corresponding Sonine pair)
\begin{enumerate}
\item[$\mathbf{(K)}$] $\lim_{\theta \rightarrow
0}\varphi(\theta )/\theta=\infty $ and $\lim_{\theta \rightarrow
+\infty }\varphi(\theta )/\theta=0$.
\end{enumerate}

As a consequence of $\textbf{(K)}$ we have the following asymptotic behavior of
the function $\kappa (\cdot )$:%
\begin{eqnarray}
\lim_{x\rightarrow 0}\kappa (x) &=&\lim_{\theta \rightarrow
+\infty }\theta
\tilde{\kappa}(\theta )=+\infty   \label{kappa1} \\
\lim_{x\rightarrow +\infty }\kappa (x) &=&\lim_{\theta \rightarrow
0}\theta \tilde{\kappa}(\theta )=0.  \label{kappa2}
\end{eqnarray}%
Finally, under $\textbf{(C)-(K)}$, we have that%
\begin{eqnarray}
\lim_{x\rightarrow 0}\chi (x) &=&\lim_{\theta \rightarrow +\infty }\theta
\widetilde{\chi }(\theta )=0 \\
\lim_{x\rightarrow +\infty }\chi (x) &=&\lim_{\theta \rightarrow 0}\theta
\widetilde{\chi }(\theta )=+\infty, \label{chilim}
\end{eqnarray}
while $\chi (\cdot)$ is increasing, by definition.

We consider the generalized Riemann-Liouville integral, by
choosing the kernel
$\kappa (\cdot )$ in the definition of the operator $\mathcal{I}_{-}^{(g)}$ given in Lemma \ref%
{lemdefintegrator} and evaluate its Fourier transform, in distributional
sense, for $f\in L^{1}(\mathbb{R})$.

\begin{definition}
\label{defint} Let $\kappa (\cdot)$ be a function defined in (\ref{ex}), for a certain Bernstein
function $\varphi$. If  $\kappa \in L^{p_{1}}(0,\epsilon )\cap L^{p_{2}}(\epsilon ,+\infty )$,
for any $\epsilon >0$ and for some $p_{1},p_{2}\geq 1$, then, for $f\in L^{1}(%
\mathbb{R})$, we define the generalized right-sided Riemann-Liouville
fractional integral as follows
\begin{equation}
(\mathcal{I}_{-}^{(\kappa )}f)(x):=-\int_{x}^{+\infty }f(t)\kappa
(t-x)dt,\quad \text{a.e. }x\in \mathbb{R}  \label{ex2}
\end{equation}
and the generalized left-sided Riemann-Liouville
fractional integral as follows
    \begin{equation}\label{eqt:LeftSidedRLIntegral}
        (\mathcal{I}_{+}^{(\kappa )}f)(x):=
        \int_{-\infty}^{x}f(t)\kappa (x-t)dt,\quad \text{a.e. }x\in \mathbb{R}.
    \end{equation}%
\end{definition}
Note that, in view of (\ref{star}), the previous operator could be alternatively defined as 
$(\mathcal{I}_{-}^{(\kappa )}f)(x):=\int_{x}^{+\infty }f(t)\nu^{*}
(t-x)dt,$ a.e. $ x\in \mathbb{R}$.

Thanks to Lemma \ref{lemdefintegrator}, the
operator $\mathcal{I}_{-}^{(\kappa )}$ introduced in
Def.~\ref{defint} is well-defined and its Fourier transform is
given by
\begin{equation}
\mathcal{F}\left( \mathcal{I}_{-}^{(\kappa )}\eta \right) (\xi ):=\int_{%
\mathbb{R}}e^{i\xi x}\left( \mathcal{I}_{-}^{(\kappa )}\eta
\right) (x)dx=\varphi(i\xi )^{-1}\widehat{\eta }(\xi ),\qquad \xi
\in \mathbb{R}, \label{ex3}
\end{equation}%
for $\eta \in \mathcal{S}(\mathbb{R})$ and $\widehat{\eta }(\xi ):=\mathcal{F%
}\left( \eta \right) (\xi ).$ Formula (\ref{ex3}) can be obtained following the same reasoning of Lemma \ref{lem2.3} and taking into account equation (\ref{ex}).

\begin{remark}
If we choose, in particular, $\kappa (s)=s^{\beta -1}/\Gamma (\beta ),$
for $\beta \in (0,1),$ which corresponds to $\nu (s)=s^{-\beta }/\Gamma
(1-\beta )$, formula (\ref{ex2}) reduces to the (right-sided)
Riemann-Liouville integral
\begin{equation}
(\mathcal{I}_{-}^{\beta }\eta )(x)=\frac{1}{\Gamma (\beta )}%
\int_{x}^{+\infty }\eta (t)(t-x)^{\beta -1}dt.  \label{ex4}
\end{equation}%
Moreover, in this case, formula (\ref{ex3}) gives the well-known
Fourier transform of (\ref{ex4}): $\mathcal{F}\left(
\mathcal{I}_{-}^{\beta }\eta \right) (\xi )$ $=(i\xi )^{-\beta
}\widehat{\eta }(\xi )$ (see \cite{KIL}, p.90).
\end{remark}

We now apply $\mathcal{I}_{-}^{(\kappa )}$ to the indicator
function.

\begin{theorem}\label{kap}
Let the functions $\chi(\cdot)$ and $\kappa(\cdot)$ satisfy the following conditions,
for any $\varepsilon
>0$,

\begin{enumerate}
\item[$\mathbf{(B1)}$] $\kappa \in L^{1}(0 ,\varepsilon) \cap L^{2}(\varepsilon ,+\infty )$,

\item[$\mathbf{(B2)}$] $\chi \in L^{2}(0,\varepsilon )$.
\end{enumerate}
Then
\begin{equation}
(\mathcal{I}_{-}^{(\kappa )}1_{(a,b)})(x)=\chi ((b-x)_{+})-\chi
((a-x)_{+}),\quad \text{a.e. }x\in \mathbb{R},  \label{nor}
\end{equation}%
with Fourier transform, in distributional sense,
\begin{equation}
\mathcal{F}\left( \mathcal{I}_{-}^{(\kappa )}1_{(a,b)}\right) (\xi
)=\varphi(i\xi )^{-1}\frac{e^{ib\xi }-e^{ia\xi }}{i\xi },
\label{fint}
\end{equation}%
and $\mathcal{I}_{-}^{(\kappa )}1_{(a,b)}\in L^{2}(\mathbb{R}).$
\end{theorem}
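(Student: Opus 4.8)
The plan is to mirror the two derivative-case results, Theorem~\ref{thmFourierconvolution} and Theorem~\ref{L2}, systematically replacing the tail density $\nu$ by the Sonine kernel $\kappa$ and the primitive of $\nu$ by $\chi(y)=\int_{0}^{y}\kappa(z)\,dz$. For the pointwise formula (\ref{nor}), I would apply $\mathcal{I}_{-}^{(\kappa)}$, which is well-defined on $L^{1}(\mathbb{R})$ by Lemma~\ref{lemdefintegrator} (here $\kappa\in L^{1}(0,\epsilon)\cap L^{2}(\epsilon,+\infty)$ by \textbf{(B1)}, i.e.\ $p_{1}=1$, $p_{2}=2$), directly to $1_{(a,b)}\in L^{1}(\mathbb{R})$, and split according to the location of $x$. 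Integrating over $t\in(a,b)\cap(x,+\infty)$ and substituting $z=t-x$ gives $0$ for $x\geq b$, the value $\chi(b-x)$ for $a\leq x<b$, and $\chi(b-x)-\chi(a-x)$ for $x<a$; these three cases combine into $\chi((b-x)_{+})-\chi((a-x)_{+})$, which is (\ref{nor}). Note that $\chi$ is finite everywhere, continuous, and satisfies $\chi(0)=0$, since $\kappa\in L^{1}(0,\epsilon)$ near the origin while $\kappa\in L^{2}\subset L^{1}_{loc}$ away from it.

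For the Fourier transform (\ref{fint}), I would repeat the Parseval/Plancherel computation of Theorem~\ref{thmFourierconvolution} with $\nu$ replaced by $\chi$. Pairing against $\eta\in\mathcal{S}$ and using (\ref{nor}) together with $\chi(0)=0$,
\begin{equation*}
\langle \mathcal{F}(\mathcal{I}_{-}^{(\kappa)}1_{(a,b)}),\eta\rangle_{L^2}
=\int_{-\infty}^{b}\chi(b-x)\,\mathcal{F}(\eta)(x)\,dx
-\int_{-\infty}^{a}\chi(a-x)\,\mathcal{F}(\eta)(x)\,dx .
\end{equation*}
The substitutions $y=b-x$ and $y=a-x$ together with Fubini turn the right-hand side into $\int_{\mathbb{R}}\eta(t)(e^{ibt}-e^{iat})\int_{0}^{\infty}e^{-iyt}\chi(y)\,dy\,dt$. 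The inner integral is the Laplace transform of $\chi$ on the imaginary axis, which equals $\widetilde{\chi}(it)=1/(it\,\varphi(it))$ by the relation $\widetilde{\chi}(\theta)=1/(\theta\varphi(\theta))$ recalled before Definition~\ref{defint}. Hence the pairing equals $\int_{\mathbb{R}}\varphi(it)^{-1}\frac{e^{ibt}-e^{iat}}{it}\,\eta(t)\,dt=\langle \varphi(i\cdot)^{-1}\mathcal{F}(1_{(a,b)}),\eta\rangle_{L^2}$, giving (\ref{fint}); this is also consistent with (\ref{ex3}) extended from $\mathcal{S}$ to the indicator in the distributional sense.

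For $\mathcal{I}_{-}^{(\kappa)}1_{(a,b)}\in L^{2}(\mathbb{R})$, I would split $\int_{\mathbb{R}}[\chi((b-x)_{+})-\chi((a-x)_{+})]^{2}\,dx$ into three ranges, as in Theorem~\ref{L2}. On $(b,+\infty)$ the integrand vanishes. On the bounded middle range, for $x\in(a-1,a)$ the integrand is dominated by the constant $\chi(b-a+1)^{2}$, while for $x\in(a,b)$ it equals $\chi(b-x)^{2}$, contributing $\int_{0}^{b-a}\chi(w)^{2}\,dw<\infty$ by \textbf{(B2)}. The genuine work is the left tail $(-\infty,a-1)$, where the integrand is $\big(\int_{a-x}^{b-x}\kappa(z)\,dz\big)^{2}$; since $\kappa$ is non-increasing (a consequence of $\varphi$ being complete Bernstein, established in the preliminaries), the windowed integral is bounded by $(b-a)\kappa(a-x)$, so that after $w=a-x$ one gets $\int_{-\infty}^{a-1}\big[\int_{a-x}^{b-x}\kappa\big]^{2}\,dx\leq (b-a)^{2}\int_{1}^{\infty}\kappa(w)^{2}\,dw<\infty$ by \textbf{(B1)}.

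The main obstacle is precisely this tail estimate: the pointwise and Fourier parts are essentially algebraic once (\ref{nor}) is in hand, whereas controlling the $L^{2}$ norm near $-\infty$ requires exploiting the monotonicity of $\kappa$ (the analogue of the complete monotonicity of $\overline{\nu}$ used in Theorem~\ref{L2}) to convert the bounded-window integral of $\kappa$ into a single tail value, after which \textbf{(B1)} closes the estimate. The only other points needing care are the finiteness and continuity of $\chi$ (ensuring the middle range is genuinely bounded) and the Fubini justification in the Fourier step.
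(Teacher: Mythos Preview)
Your treatment of the pointwise formula (\ref{nor}) and of the $L^{2}$ estimate is correct and essentially identical to the paper's: the paper also splits the $L^{2}$ integral into the piece $\int_{a}^{b}\chi(b-x)^{2}\,dx=\int_{0}^{b-a}\chi(w)^{2}\,dw$ (handled by \textbf{(B2)}) and the tail $\int_{-\infty}^{a}\bigl[\int_{a-x}^{b-x}\kappa\bigr]^{2}\,dx$, on which it uses the same monotonicity bound $\int_{w}^{w+b-a}\kappa\leq (b-a)\kappa(w)$ and \textbf{(B1)}. Your extra buffer interval $(a-1,a)$ versus the paper's split at $\epsilon$ after the substitution $w=a-x$ is cosmetic.

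The Fourier-transform step, however, has a genuine gap. You separate the two terms and arrive at the inner integral $\int_{0}^{\infty}e^{-iyt}\chi(y)\,dy$, which you identify with $\widetilde{\chi}(it)=1/(it\,\varphi(it))$. But under \textbf{(C)}--\textbf{(K)} one has $\chi(y)\to+\infty$ as $y\to+\infty$ (see (\ref{chilim})), so this integral \emph{diverges}; the Laplace relation $\widetilde{\chi}(\theta)=1/(\theta\varphi(\theta))$ is valid only for $\mathrm{Re}\,\theta>0$ and does not extend pointwise to the imaginary axis. Correspondingly, the Fubini swap you need fails, since $\int_{0}^{\infty}\chi(y)\int_{\mathbb{R}}|\eta(t)|\,dt\,dy=\|\eta\|_{L^{1}}\int_{0}^{\infty}\chi(y)\,dy=\infty$. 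The direct analogy with Theorem~\ref{thmFourierconvolution} breaks down precisely because $\nu\to 0$ at infinity while $\chi\to\infty$.

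The paper sidesteps this by invoking (\ref{ex3}), whose proof (following Lemma~\ref{lem2.3}) works with $\kappa$, not $\chi$: splitting $\kappa=\kappa_{1}+\kappa_{2}\in L^{1}+L^{2}$ gives $\int_{0}^{\infty}e^{-i\xi y}\kappa(y)\,dy=\widetilde{\kappa}(i\xi)=1/\varphi(i\xi)$ in the distributional sense, and then the convolution identity $\mathcal{I}_{-}^{(\kappa)}1_{(a,b)}=1_{(a,b)}\star\breve{\kappa}$ yields (\ref{fint}). To repair your argument you should either route through $\kappa$ in this way, or keep the difference $\chi((b-x)_{+})-\chi((a-x)_{+})=\int_{0}^{\infty}1_{(a,b)}(x+z)\kappa(z)\,dz$ together (this is in $L^{2}$, so Fubini against $\mathcal{F}(\eta)\in L^{2}$ is justified by Cauchy--Schwarz) before passing to $\kappa$'s Fourier transform.
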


\begin{proof}
Formula (\ref{nor}) is obtained directly, by applying
Def.\ref{defint}. The Fourier transform (\ref{fint}) follows
immediately from (\ref{ex3}). In order to prove that
$\mathcal{I}_{-}^{(\kappa )}1_{(a,b)}\in L^{2}(\mathbb{R})$ we write
\begin{eqnarray*}
&&\int_{-\infty }^{+\infty }\left[ \chi ((b-x)_{+})-\chi ((a-x)_{+})\right]
^{2}dx \\
&=&\int_{-\infty }^{a}\left[ \chi (b-x)-\chi (a-x)\right] ^{2}dx+%
\int_{a}^{b}\chi (b-x)^{2}dx.
\end{eqnarray*}%
The second integral is proved to be finite, by \textbf{(B2)}, since
\begin{equation*}
\int_{a}^{b}\chi (b-x)^{2}dx=\int_{0}^{b-a}\chi (w)^{2}dw<\infty ,
\end{equation*}%
while, for the first one, we can write (by considering that $\kappa (\cdot )$ is continuous and non-increasing)%
\begin{eqnarray*}
\int_{-\infty }^{a}\left[ \chi (b-x)-\chi (a-x)\right] ^{2}dx
&=&\int_{-\infty }^{a}\left[ \int_{a-x}^{b-x}\kappa (z)dz\right] ^{2}dx \\
&=&\int_{0}^{+\epsilon }\left[ \int_{w}^{w+b-a}\kappa (z)dz\right] ^{2}dw+\int_{\epsilon}^{+\infty }\left[ \int_{w}^{w+b-a}\kappa (z)dz\right] ^{2}dw  \\
&&\leq \int_{0}^{+\epsilon }\left[ \int_{0}^{b-a}\kappa (z)dz\right] ^{2}dw+(b-a)^2 \int_{\epsilon}^{+\infty }\kappa (z)^{2}dw <\infty ,
\end{eqnarray*}%
by \textbf{(B1)}.
\end{proof}

\subsection{The inner products induced by the Bernstein function}
Let now consider the nuclear triple $\mathcal{S}\subset L^{2}(\mathbb{R}%
)\subset \mathcal{S^{\prime }}$, then we define the following inner products on $\mathcal{S}$.

\begin{definition}\label{Inner}
Let $\varphi$ be a Bernstein function satisfying $\mathbf{(C)}$-$\mathbf{(K)}$, then, for any $\xi, \eta \in \mathcal{S}$, we define
\begin{equation*}
\langle \eta ,\xi \rangle _{\nu }:=C_{\nu }\int_{\mathbb{R}}|\varphi(ix)|^{2}%
\overline{\hat{\xi}(x)}\hat{\eta}(x)dx,
\end{equation*}%
where $\nu (\cdot )$ is the tail L\'{e}vy measure corresponding to
the Bernstein function $\varphi$ and $C_{\nu
}:=(||1_{[0,1)}||_{\nu }^{2})^{-1}=\left( \int_{\mathbb{R}}\left[
\nu ((1-u)_{+})-\nu ((-u)_{+})\right] ^{2}du\right) ^{-1}$. Similarly, we define

\begin{equation*}
\langle \eta ,\xi \rangle _{\kappa }:=C_{\kappa }\int_{\mathbb{R}}|\varphi(ix)|^{-2}%
\overline{\hat{\xi}(x)}\hat{\eta}(x)dx,
\end{equation*}%
where $C_{\kappa }:=(||1_{[0,1)}||_{\kappa }^{2})^{-1}$
$=\left( \int_{\mathbb{R}}\left[ \chi ((1-u)_{+})-\chi
((-u)_{+})\right] ^{2}du\right) ^{-1}$ and $\kappa
(\cdot )$ is the associate Sonine pair. 
\end{definition}

 It is
easy to see that both $\langle \cdot ,\cdot \rangle _{\nu }$ and  $\langle \cdot ,\cdot \rangle _{\kappa }$ are
well-defined, since the symmetry and linearity are
immediate consequences of the definitions. They are Hermitian,
since, for any $\xi,\eta \in \mathcal{S}$,
\begin{eqnarray*}
\langle \xi ,\eta \rangle _{\nu } &=&C_{\nu }\int_{\mathbb{R}}|\varphi(ix)|^{2}%
\overline{\hat{\xi}(x)}\hat{\eta}(x)dx \\
&=& C_{\nu }\overline{\int_{\mathbb{R}}|\varphi(ix)|^{2}\overline{\hat{\eta}(x)}%
\hat{\xi}(x)dx} =\overline{\langle \eta ,\xi \rangle _{\nu }}
\end{eqnarray*}
and the same holds for $\langle \cdot ,\cdot \rangle _{\kappa }$.
It is also evident that $\langle \xi ,\xi \rangle _{\nu
}\geq0$, $\langle \xi ,\xi \rangle _{\kappa
}\geq0$, for any $\xi,$  and that they vanish if and only if $\xi$ is
equal to zero a.e..

 We show now that the following relationships hold for the inner products in $L^{2}(\mathbb{R})$:

\begin{equation}
\langle \eta ,\xi \rangle _{\nu }= C_\nu \langle \mathcal{D}_{-}^{(\nu )}\eta ,%
\mathcal{D}_{-}^{(\nu )}\xi \rangle _{L^{2}} \quad \text{and} \quad \langle \eta ,\xi \rangle _{\kappa }= C_\kappa\langle \mathcal{I}_{-}^{(\kappa )}\eta ,%
\mathcal{I}_{-}^{(\kappa )}\xi \rangle _{L^{2}}, \label{rel}
\end{equation}%
for $\eta ,\xi \in \mathcal{S}(\mathbb{R}).$ Indeed, in the first case, we have that

\begin{eqnarray}
\langle \eta ,\xi \rangle _{\nu } &=&C_{\nu}\int_{\mathbb{R}}|\varphi(ix)|^{2}%
\overline{\hat{\xi}(x)}\hat{\eta}(x)dx=C_{\nu}\int_{\mathbb{R}}\overline{%
\varphi(ix)\hat{\xi}(x)}\varphi(ix)\hat{\eta}(x)dx \label{up} \\
&=&C_{\nu}\int_{\mathbb{R}}\overline{\widehat{\mathcal{D}_{-}^{(\nu )}\xi (x)}}%
\widehat{\mathcal{D}_{-}^{(\nu )}\eta (x)}dx \notag \\
&=&C_{\nu}\langle \widehat{\mathcal{D}_{-}^{(\nu )}\eta },\widehat{\mathcal{D}%
_{-}^{(\nu )}\xi }\rangle _{L^{2}}=C_{\nu}\langle \mathcal{D}_{-}^{(\nu )}\eta ,%
\mathcal{D}_{-}^{(\nu )}\xi \rangle _{L^{2}},\notag
\end{eqnarray}%
where, in the last step, we have used the Plancherel's theorem. The second case follows similarly.

We know, from Theorem \ref{L2} and Theorem \ref{kap}, that both $\mathcal{D}_{-}^{(\nu
)}1_{(a,b)}$ and $\mathcal{I}_{-}^{(\kappa
)}1_{(a,b)}$ belong to $L^2 (\mathbb{R})$ and then the equivalences given in (\ref%
{rel}) hold, not only in $\mathcal{S}(\mathbb{R})$, but also for the
indicator functions.
Thus, by Theorem \ref{thmFourierconvolution}, we have that
\begin{eqnarray}
\langle 1_{[0,t)},1_{[0,s)}\rangle _{\nu }  &=&C_{\nu }\int_{\mathbb{R}}\varphi (ix)\widehat{1_{[0,t)}}(x)%
\overline{\varphi (ix)\widehat{1_{[0,s)}}(x)}dx \label{eq} \\
&=&C_{\nu }\int_{\mathbb{R}}\frac{\left\vert \varphi(ix)\right\vert ^{2}}{x^{2}}%
\left[ (e^{-itx}-1)(e^{isx}-1)\right] dx \notag
\\
&=&\int_{\mathbb{R}}\mathcal{F}(\left[ \nu (t-\cdot)_{+}-\nu (-\cdot)_{+}%
\right])(x) \overline{\mathcal{F}(\left[ \nu (s-\cdot)_{+}-\nu (-\cdot)_{+}%
\right])}(x)dx \notag \\
&=&\text{[by the Plancherel's theorem]} \notag\\
&=&\int_{\mathbb{R}}\left[ \nu (t-u)_{+}-\nu (-u)_{+}%
\right]  \left[ \nu (s-u)_{+}-\nu (-u)_{+}%
\right] du \notag \\
&=&C_{\nu }\langle\mathcal{D}^{(\nu)}_{-}1_{[0,t)},\mathcal{D}^{(\nu)}_{-}1_{[0,s)}\rangle_{L^2} < +\infty, \notag
\end{eqnarray}
Analogously, by Theorem \ref{kap}, we can write that
\begin{eqnarray}
\langle 1_{[0,t)},1_{[0,s)}\rangle _{\kappa }&=&C_{\kappa }\int_{\mathbb{R}}\frac{(e^{-itx}-1)(e^{isx}-1)}{x^{2}\left\vert \varphi(ix)\right\vert^{2}}%
dx  \label{eqint} \\
&=&\int_{\mathbb{R}}\left[ \chi (t-u)_{+}-\chi (-u)_{+}\right]  \left[ \chi (s-u)_{+}-\chi (-u)_{+}%
\right] du \notag \\
&=&C_{\kappa }\langle \mathcal{I}_{-}^{(\kappa
)}1_{[0,t)},\mathcal{I}_{-}^{(\kappa )}1_{[0,s)}\rangle _{L^{2}}< +\infty. \notag
\end{eqnarray}

\begin{remark}\label{remder}
If we put $\varphi(x)=x^{\frac{1-\alpha }{2}}$, for $\alpha \in
(0,1)$, with
corresponding tail L\'{e}vy measure $\nu (x)=x^{\frac{\alpha -1}{2}%
}/\Gamma ((1+\alpha )/2)$, we obtain the case of the fractional
Brownian motion with Hurst parameter $H=\alpha/2$, for $H<1/2$:
indeed we have that $\mathcal{D}_{-}^{(\nu
)}=\mathcal{D}_{-}^{(1-\alpha)/2}$. Moreover, equation
(\ref{rel}) coincides with the result given in \cite{GRO2}, Lemma
3.3, i.e.
\begin{equation*}
\langle \eta ,\xi \rangle _{\alpha }=C_{\alpha}\langle
\mathcal{D}_{-}^{(1-\alpha )/2}\eta ,\mathcal{D}_{-}^{(1-\alpha
)/2}\xi \rangle _{L^{2}},
\end{equation*}%
where $\langle \eta ,\xi \rangle _{\alpha }:=C_{\alpha }\int_{\mathbb{R}%
}|x|^{1-\alpha }\overline{\hat{\xi}(x)}\hat{\eta}(x)dx$, for $\eta
,\xi \in \mathcal{S}(\mathbb{R})$ and $C_{\alpha}=\sin (\pi \alpha/2) \Gamma (1+\alpha)$.

We note that the restriction on the parameter $\alpha$ is needed since, otherwise, i.e. for $\alpha >1$,
the function $\nu(\cdot)$ would not be a non-increasing function, as a proper tail L\'{e}vy density. Therefore, this case must be generalized by considering the general fractional integral,
instead of the derivative.

\end{remark}
In order to simplify the notation and to recover more easily the well-known special case of
the FBM, we give the following
\begin{definition} \label{DefOperatorM} For a complete Bernstein function $\varphi
(\cdot)$ satisfying $\mathbf{(C)}$-$\mathbf{(K)}$, let $\mathcal{M}_{\pm}^{(\varphi )}$ be defined either as
$\mathcal{M}_{\pm}^{(\nu )}:=C_{\nu}\mathcal{D}_{\pm}^{(\nu )}$, under
the assumptions \textbf{(A1)-(A2)}, or as $\mathcal{M}_{\pm}^{(\kappa )}:=C_{\kappa}\mathcal{I}_{\pm}^{(\kappa )}$, under \textbf{(B1)-(B2)}. 
\end{definition}

\subsection{Properties of the GFOs} \label{sec:AnalyticalPropertiesLeftSided}

Finally, we prove the continuity of the right-sided GFO $\mathcal{M}_{+}^{(\varphi )}$ defined in Def. \ref{DefOperatorM} and the integration by parts rule for a special set of functions.
As in \cite{BEN}, the above properties of continuity and integration by parts play a central role for the BG-noise's existence.  

	\subsubsection*{Continuity}
	We introduce the complexified Schwartz space as
	
	\[  \mathcal{S}_{\mathbb{C}}:=\{ \xi_1 + \mathrm{i}\xi_2 \mid \xi_i \in \mathcal{S} \text{ for }i=1,2\} \]
	and the families $(|\cdot|_p)_{p \in \mathbb{N}}$ and $(|\cdot|_{r,s})_{r,s \in \mathbb{N}}$ defined as follows, for $\xi \in \mathcal{S}$,
	\[  |\xi|_{r,s}:=\sup_{x \in \mathbb{R}}|x^r \frac{d^s}{dx^s} \xi(x)|  \]
	and  $\| \cdot \|_p$ are defined in Appendix~\ref{App:HidaDistribution}.
	The previous two families of norms and semi-norms are equivalent on $\mathcal{S}$, see Section V.3 in \cite{REE}.
	\begin{lem}\label{lem:ContinuityFracOpera}
		Let $p_{1,\nu},p_{2,\nu},p_{1,\kappa},p_{2,\kappa} \geq 1$, $\nu \in L^{p_{1,\nu}}((0,\epsilon))\cap L^{p_{2,\nu}}([\epsilon,\infty))$ and $\kappa \in L^{p_{1,\kappa}}((0,\epsilon))\cap L^{p_{2,\kappa}}([\epsilon,\infty))$ for any $\epsilon>0$. If $\varphi$ is either $\kappa$ or $\nu$, then
		
		\[  \mathcal{M}_{+}^{\varphi}: (\mathcal{S}_{\mathbb{C}},\|\cdot\|_p) \to (L^{\infty}(\mathbb{R}),\|\cdot\|_{L^\infty(\R)})  \]
		is bounded. \\
		Specifically, there exist $p_\varphi \in \mathbb{N}$ and $C_\varphi>0$ such that $\|\mathcal{M}_{+}^{\varphi}\xi\|_{L^\infty(\R)}< C_{\varphi} \| \xi \|_{p_{\varphi}}$, for any $\xi \in \mathcal{S}_{\mathbb{C}}$. 
	\end{lem}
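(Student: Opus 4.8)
The plan is to reduce $\mathcal{M}_{+}^{\varphi}$ to a convolution against the kernel and then invoke the endpoint $r=\infty$ of Young's inequality, closing the estimate with the elementary bound of the $L^{q}$-norm of a Schwartz function by its semi-norms $|\cdot|_{r,s}$. Since, by Definition \ref{DefOperatorM}, $\mathcal{M}_{+}^{(\kappa)}=C_{\kappa}\mathcal{I}_{+}^{(\kappa)}$ and $\mathcal{M}_{+}^{(\nu)}=C_{\nu}\mathcal{D}_{+}^{(\nu)}$, the multiplicative constants are harmless and it suffices to bound $\|\mathcal{I}_{+}^{(\kappa)}\xi\|_{L^\infty(\R)}$ and $\|\mathcal{D}_{+}^{(\nu)}\xi\|_{L^\infty(\R)}$. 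For the integral operator, Lemma \ref{lemdefintegrator} together with Definition \ref{defint} (after the change of variable $s=x-t$) gives $(\mathcal{I}_{+}^{(\kappa)}\xi)(x)=(\xi\star\kappa)(x)$, while for the derivative I would use Remark \ref{Rem}, so that on $\mathcal{S}_{\mathbb{C}}$ the operator agrees with the Weyl form \eqref{weyl2}, whence $(\mathcal{D}_{+}^{(\nu)}\xi)(x)=(\xi'\star\nu)(x)$ with $\xi':=\frac{d}{dx}\xi\in\mathcal{S}_{\mathbb{C}}$. This unifies both cases as a single convolution $\psi\star g$, with kernel $g\in\{\kappa,\nu\}$ and $\psi\in\{\xi,\xi'\}$.

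Next I would split the kernel as $g=g_{1}+g_{2}$, with $g_{1}:=g\,1_{(0,\epsilon)}\in L^{p_{1}}(\R)$ and $g_{2}:=g\,1_{(\epsilon,\infty)}\in L^{p_{2}}(\R)$ exactly as in Lemma \ref{lemdefintegrator}, and apply Young's inequality with $r=\infty$ to each piece, i.e.\ with conjugate exponents $1/q_{i}+1/p_{i}=1$. This yields the pointwise-uniform estimate
\[
\|\psi\star g\|_{L^\infty(\R)}\le \|\psi\star g_{1}\|_{L^\infty(\R)}+\|\psi\star g_{2}\|_{L^\infty(\R)}\le \|\psi\|_{q_{1}}\,\|g_{1}\|_{p_{1}}+\|\psi\|_{q_{2}}\,\|g_{2}\|_{p_{2}},
\]
and reduces the problem to dominating $\|\psi\|_{q_{i}}$ by a Schwartz semi-norm of $\xi$.

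For the last step, for any $q\ge 1$ and any $\zeta\in\mathcal{S}_{\mathbb{C}}$ I would factor out $(1+x^{2})$,
\[
\|\zeta\|_{q}^{q}=\int_{\R}\frac{\big|(1+x^{2})\zeta(x)\big|^{q}}{(1+x^{2})^{q}}\,dx\le \Big(\sup_{x\in\R}(1+x^{2})|\zeta(x)|\Big)^{q}\int_{\R}\frac{dx}{(1+x^{2})^{q}},
\]
where the last integral is finite since $q\ge 1$; hence $\|\zeta\|_{q}\le c_{q}\,(|\zeta|_{0,0}+|\zeta|_{2,0})$, and in the degenerate case $p_{i}=1$ (so $q_{i}=\infty$) one simply uses $\|\psi\|_{\infty}\le |\psi|_{0,0}$. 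Taking $\zeta=\xi$ in the integral case, and $\zeta=\xi'$ in the derivative case (so that $|\xi'|_{r,0}=|\xi|_{r,1}$), the right-hand side of the Young estimate becomes a finite sum of semi-norms $|\xi|_{r,s}$. Finally, by the equivalence of the families $(|\cdot|_{r,s})_{r,s}$ and $(\|\cdot\|_{p})_{p}$ on $\mathcal{S}$ (Section~V.3 in \cite{REE}), there exist $p_{\varphi}\in\mathbb{N}$ and $C_{\varphi}>0$ with $\|\mathcal{M}_{+}^{\varphi}\xi\|_{L^\infty(\R)}<C_{\varphi}\|\xi\|_{p_{\varphi}}$, as claimed. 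I expect the only genuine care to be the bookkeeping of the conjugate exponents $q_{1},q_{2}$, including the endpoints $p_{i}\in\{1,\infty\}$, rather than any analytic obstacle; the two substantive ingredients are the $L^{\infty}$-endpoint of Young's inequality and the elementary $L^{q}$-bound for Schwartz functions.
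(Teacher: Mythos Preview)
Your argument is correct. The unification via Young's inequality at the endpoint $r=\infty$, together with the elementary $L^q$-bound for Schwartz functions, handles both operators at once and needs only the bare integrability hypotheses $g_i\in L^{p_i}$.

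The paper proceeds differently and less uniformly. For $\mathcal{M}_{+}^{(\nu)}$ it also passes to the Weyl form, but then treats the two pieces asymmetrically: the $\nu_1$-part is bounded by $\|\nu_1\|_{L^1}\,|\xi|_{0,1}$ (your Young estimate with $p_1=1$), while for the $\nu_2$-part the paper does \emph{not} use Young on $\xi'\star\nu_2$; instead it exploits the specific structure $\nu(s)=\int_s^\infty\bar\nu(y)\,dy$, swaps the order of integration, and differentiates the inner primitive to obtain the bound $2\,|\xi|_{0,0}\,\nu(\epsilon)$. For $\mathcal{M}_{+}^{(\kappa)}$ the paper again uses $\|\kappa_1\|_{L^1}\,|\xi|_{0,0}$ on the singular part and, on the tail, the monotonicity bound $\kappa_2\le\kappa(\epsilon)$ together with $\|\xi\|_{L^1}\le\pi(|\xi|_{0,0}+|\xi|_{2,0})$. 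Your route is shorter and more robust: it never appeals to the L\'evy-tail representation of $\nu$ or to the monotonicity of $\kappa$, and it yields both cases from a single template. The paper's route, in exchange, produces bounds involving only the seminorms $|\xi|_{0,0}$ and $|\xi|_{0,1}$ and only the $L^1$-norms of the near-origin pieces, which are slightly more explicit but not needed for the statement.
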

	
	\begin{proof}
		Let $\xi \in \mathcal{S}_{\mathbb{C}}$ with $\xi_1 = \Re(\xi)$ and $\xi_2 = \Im(\xi)$. By noting that $\mathcal{M}_{+}^{\varphi}\xi=\mathcal{M}_{+}^{\varphi}\xi_1+\mathrm{i}\mathcal{M}_{+}^{\varphi}\xi_2$, we firstly prove the statement for $\mathcal{M}_{+}^{\varphi}=\mathcal{M}_{+}^{\nu}$ and then for $\mathcal{M}_{+}^{\varphi}=\mathcal{M}_{+}^{\kappa}$ with $\xi \in \mathcal{S}$ as for $\xi \in \mathcal{S}_{\mathbb{C}}$ is straightforward. \\
		Let $\mathcal{M}_{+}^{\varphi}=\mathcal{M}_{+}^{\nu}$, $\xi \in \mathcal{S}$, $x \in \mathbb{R}$ and for any $\epsilon>0$, we split the L{\'e}vy measure as $\nu(s)=\nu_1(s)+\nu_2(s)$ for $s>0$ where $\nu_1(s)=\nu(s)1_{(0,\epsilon)}(s)$ and $\nu_2(s)=\nu(s)1_{(\epsilon,\infty)}(s)$. We have that
		
		\begin{eqnarray*}
			(\sqrt{C_\nu})^{-1}\left|\mathcal{M}_{+}^{\nu}\xi(x)\right|&=&\left|\frac{d}{dx} \int_{-\infty}^x \xi(s)\nu(x-s)ds \right|\\
			&=& \left|\frac{d}{dx} \int_{0}^\infty \xi(x-s)\nu(s)ds \right|\\
			&\leq&  \int_{0}^\infty \left|\frac{d}{dx}\xi(x-s)\right|\nu_1(s)ds + \frac{d}{dx} \int_{0}^\infty \left|\xi(x-s)\right|\nu_2(s)ds\\
			&\leq& \max_{x \in \mathbb{R}}\left|\xi'(x)\right|\|\nu_1\|_{L^{1}(\mathbb{R}^+)}+\left|\frac{d}{dx} \int_{0}^\infty \xi(x-s)\nu_2(s)ds\right|.
		\end{eqnarray*}
		
		For the last term, we have that
		
		\[
		\int_{0}^\infty \xi(x-s)\nu_2(s)ds=\int_{\epsilon}^\infty \xi(x-s)\int_s^ \infty \bar{\nu}(y)dyds
		=\int_\epsilon^\infty \bar{\nu}(y)\int_\epsilon^y \xi(x-s)dsdy,
		\]
		and 
		\[ \left|\frac{d}{dx} \int_\epsilon^y\xi(x-s)ds\right|=\left|\frac{d}{dx} \int_{x-y}^{x-\epsilon}\xi(u)du\right|<|\xi(x-\epsilon)|+|\xi(x-y)|\leq 2 \max_{x \in \mathbb{R}}|\xi(x)|, \]
		so that we obtain the following inequality
		
		\begin{eqnarray*}  
			\left|\frac{d}{dx} \int_{0}^\infty \xi(x-s)\nu_2(s)ds\right|&=& \int_\epsilon^\infty \bar{\nu}(y)dy\left|\int_\epsilon^y\frac{d}{dx} \xi(x-s)ds\right|\\
			&<& 2 \max_{x \in \mathbb{R}}|\xi(x)|\int_\epsilon^\infty \bar{\nu}(y)dy\\
			&=&2 \max_{x \in \mathbb{R}}|\xi(x)| \nu(\epsilon).
		\end{eqnarray*}
		Thus, 
		\[ 	|\mathcal{D}_{+}^{\nu}\xi(x)|< \max_{x \in \mathbb{R}}|\xi'(x)|\|\nu_1\|_{L^{1}(\mathbb{R}^+)}+2 \max_{x \in \mathbb{R}}|\xi(x)| \nu(\epsilon)<C_{\nu, \epsilon}(|\xi|_{0,1}+|\xi|_{0,0}) \]
		where $C_{\nu, \epsilon}=\|\nu_1\|_{L^{1}(\mathbb{R})}+2\nu(\epsilon)$.
		
		

         Finally, by Lemma 2 in \cite{REE}, p.~142,   
         , we finally have that there exists $p' \in \mathbb{N}$ such that $\|\mathcal{D}_{+}^{\nu}\xi\|_{\infty}< C \| \xi \|_{p'}$,
		for $C>0$.\\
        Let now $\mathcal{M}_{+}^{\varphi}=\mathcal{M}_{+}^{\kappa}$, $\xi \in \mathcal{S}$, $x \in \mathbb{R}$ and for any $\epsilon>0$, we split $\kappa$ as $\nu$ above such that

        \begin{eqnarray*}
            (\sqrt{C_\nu})^{-1}\left|\mathcal{M}_{+}^{\kappa}\xi(x)\right|&=&\left| \int_{-\infty}^x \xi(s)\kappa(x-s)ds \right|\\
            &\leq& \int_0^\infty |\xi(x-s)|\kappa(s)ds=\int_0^\infty |\xi(x-s)|\kappa_1(s)ds + \int_0^\infty |\xi(x-s)| \kappa_2(s)ds\\
            &\leq& \|\xi\|_{0,0} \|\kappa_1\|_{L^1(\R)} +\kappa(\epsilon) \|\xi\|_{L^1(\R)}.
        \end{eqnarray*}
        From p.~135 in \cite{REE}, it follows that $\|\xi \|_{L^1(\mathbb{R})}\leq \pi( \|\xi \|_{L^\infty(\R)} +\|x^2 \xi \|_{L^\infty(\R)})$. Using Lemma 2 in p.~142 of \cite{REE} and leveraging the fact that $|\cdot|_{r,s}$ are directed, we obtain $\|\mathcal{M}_{+}^{\kappa}\xi \|_{L^\infty(\R)}\leq C_{\kappa,\epsilon}|\xi |_{p''}$, where $C_{\kappa,\epsilon}=\|\kappa_1\|_{L^1(\R)}+\kappa(\epsilon)\pi$ and $p'' \in \mathbb{N}$.\\
        For $\epsilon=1$ we have that $\|\mathcal{M}_{+}^{\varphi}\xi \|_{L^\infty(\R)}\leq C_\varphi \| \xi \|_{p_\varphi} $ where $p_\varphi \in \N$, so that the result follows. 
		
	\end{proof}
	
	\subsubsection*{Integration by parts rule} We prove the following result, which is needed below for the definition of the BG-noise.

\begin{lem}\label{lem:IntegrPartsFormul}
		Let $\xi \in \mathcal{S}$ and let $\mathcal{M}_{-}^{(\varphi )}$ be defined as in Def. \ref{DefOperatorM}; if assumptions \textbf{(A1)-(A2)} and \textbf{(B1)-(B2)} hold, then 
		\begin{equation} \label{eq: IntegPartsIndica}
			\langle \xi, \mathcal{M}_{-}^{(\varphi )} 1_{(0,t]} \rangle_{L^2(\R)}= \langle \mathcal{M}_{+}^{(\varphi )} \xi , 1_{(0,t]} \rangle_{L^2(\R)}. 
		\end{equation}
	\end{lem}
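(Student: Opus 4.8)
The plan is to establish \eqref{eq: IntegPartsIndica} by a direct computation, treating the two instances of $\mathcal{M}_{\pm}^{(\varphi )}$ from Definition \ref{DefOperatorM} separately. The key simplifying observation is that $\mathcal{M}_{-}^{(\varphi )}$ and $\mathcal{M}_{+}^{(\varphi )}$ are the \emph{same} constant multiple of $\mathcal{D}_{-}^{(\nu )}$ and $\mathcal{D}_{+}^{(\nu )}$ respectively (resp. of $\mathcal{I}_{\pm}^{(\kappa )}$), so that this common constant cancels from both sides. Hence it suffices to prove
\[ \langle\xi, \mathcal{D}_{-}^{(\nu )}1_{(0,t]}\rangle_{L^2(\R)} = \langle\mathcal{D}_{+}^{(\nu )}\xi, 1_{(0,t]}\rangle_{L^2(\R)} \]
in the derivative case (under $\mathbf{(A1)}$-$\mathbf{(A2)}$) and the analogous identity for $\mathcal{I}_{\pm}^{(\kappa )}$ in the integral case (under $\mathbf{(B1)}$-$\mathbf{(B2)}$).

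For the derivative case I would insert the explicit pointwise representations already at hand: Theorem \ref{thmFourierconvolution} gives $\mathcal{D}_{-}^{(\nu )}1_{(0,t]}(x)=\nu((t-x)_{+})-\nu((-x)_{+})$, a real $L^2$ function by Theorem \ref{L2}, while Remark \ref{Rem} identifies $\mathcal{D}_{+}^{(\nu )}$ with the Weyl operator on $\mathcal{S}$, so that $\mathcal{D}_{+}^{(\nu )}\xi(x)=\int_{0}^{\infty}\xi'(x-s)\nu(s)\,ds$. Expanding the right-hand inner product and applying Fubini, then evaluating the inner primitive $\int_{0}^{t}\xi'(x-s)\,dx=\xi(t-s)-\xi(-s)$, yields
\[ \langle\mathcal{D}_{+}^{(\nu )}\xi, 1_{(0,t]}\rangle_{L^2(\R)} = \int_0^t\!\!\int_0^\infty\xi'(x-s)\nu(s)\,ds\,dx = \int_0^\infty\nu(s)\big(\xi(t-s)-\xi(-s)\big)\,ds. \]
The substitutions $u=t-s$ and $u=-s$, together with the convention $\nu\equiv 0$ on $(-\infty,0]$ from Definition \ref{defder}, rewrite the last expression as $\int_\R\xi(u)\big[\nu((t-u)_{+})-\nu((-u)_{+})\big]\,du$, which is precisely the left-hand inner product. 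Since all kernels and $1_{(0,t]}$ are real, the conjugation in $\langle\cdot,\cdot\rangle_{L^2(\R)}$ is inert, and the computation is linear in $\xi$, so it also covers complex $\xi\in\mathcal{S}_{\mathbb{C}}$.

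The integral case is handled in the same spirit, using $\mathcal{I}_{-}^{(\kappa )}1_{(0,t]}(x)=\chi((t-x)_{+})-\chi((-x)_{+})$ from Theorem \ref{kap} and $\mathcal{I}_{+}^{(\kappa )}\xi(x)=\int_{0}^{\infty}\xi(x-s)\kappa(s)\,ds$ from Definition \ref{defint}. Introducing the primitive $F(y):=\int_{-\infty}^{y}\xi(w)\,dw$, Fubini reduces the right-hand side to $\int_{0}^{\infty}\kappa(s)\big(F(t-s)-F(-s)\big)\,ds$; for the left-hand side one first rewrites $\langle\xi, \mathcal{I}_{-}^{(\kappa )}1_{(0,t]}\rangle = \int_{0}^{\infty}\chi(s)(\xi(t-s)-\xi(-s))\,ds$, then inserts $\chi(s)=\int_{0}^{s}\kappa(z)\,dz$ and swaps the order of integration to reach the identical $\int_{0}^{\infty}\kappa(z)(F(t-z)-F(-z))\,dz$.

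The main obstacle is the rigorous justification of the Fubini/Tonelli swaps, since none of the kernels is globally integrable: $\nu$ and $\kappa$ carry a (generally non-integrable) singularity at the origin and possibly heavy tails, and $\chi$ grows at infinity. I would control this by the standard splitting $\nu=\nu_1+\nu_2$, $\kappa=\kappa_1+\kappa_2$ at a fixed $\epsilon>0$: near the origin $\nu_1,\kappa_1\in L^1(0,\epsilon)$ (by \eqref{dd} and $\mathbf{(B1)}$) are paired with $\|\xi\|_\infty$ or $\|\xi'\|_\infty$ over the bounded range $(0,t]$, while on $[\epsilon,\infty)$ the monotonicity of the kernels ($\nu(s)\le\nu(\epsilon)$, $\kappa(s)\le\kappa(\epsilon)$) together with $\xi,\xi'\in L^1(\R)$ gives finiteness; for the left-hand side of the integral case one additionally uses $\chi(s)=O(s)$ (again because $\kappa$ is non-increasing), dominated by the rapid decay of $\xi(t-s)-\xi(-s)$. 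An alternative route is Plancherel's theorem: since $\mathcal{D}_{-}^{(\nu )}1_{(0,t]},\mathcal{I}_{-}^{(\kappa )}1_{(0,t]}\in L^2(\R)$ (Theorems \ref{L2}, \ref{kap}) and, by Lemma \ref{lem2.3} and \eqref{ex3}, the Fourier multiplier of $\mathcal{D}_{+}$ (resp. $\mathcal{I}_{+}$) is the complex conjugate of that of $\mathcal{D}_{-}$ (resp. $\mathcal{I}_{-}$), both sides equal $\frac{1}{2\pi}\int_\R \overline{\varphi(ix)}\,\widehat{\xi}(x)\,\overline{\widehat{1_{(0,t]}}(x)}\,dx$ (with $\varphi(ix)$ replaced by $\varphi(ix)^{-1}$ in the integral case), provided one first checks $\mathcal{M}_{+}^{(\varphi )}\xi\in L^2(\R)$; I prefer the direct computation above since it avoids this extra verification.
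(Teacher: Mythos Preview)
Your proof is correct, but it follows a different route from the paper's. The paper proves the identity entirely on the Fourier side: it first checks that $\mathcal{M}_{+}^{(\nu)}\xi\in L^2(\R)$ by writing $\mathcal{M}_{+}^{(\nu)}\xi=\sqrt{C_\nu}(\nu_1\star D\xi)+\sqrt{C_\nu}(\nu_2\star D\xi)$ and applying Young's inequality to each piece, and then invokes Parseval's formula twice, using that the Fourier multiplier of $\mathcal{D}_{+}^{(\nu)}$ is $\overline{\varphi(i\cdot)}$, the complex conjugate of the multiplier $\varphi(i\cdot)$ for $\mathcal{D}_{-}^{(\nu)}$ (Lemma~\ref{lem2.3}). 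This is exactly the ``alternative route'' you sketch at the end and then set aside.

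Your direct computation via the explicit pointwise formulas for $\mathcal{D}_{-}^{(\nu)}1_{(0,t]}$, $\mathcal{I}_{-}^{(\kappa)}1_{(0,t]}$ and the Weyl form of $\mathcal{D}_{+}^{(\nu)}\xi$, $\mathcal{I}_{+}^{(\kappa)}\xi$, followed by Fubini and elementary substitutions, is a legitimate and more elementary alternative: it avoids verifying $\mathcal{M}_{+}^{(\varphi)}\xi\in L^2(\R)$ and does not rely on the Fourier machinery, at the cost of the bookkeeping needed to justify the Tonelli swaps (your $\nu=\nu_1+\nu_2$ splitting and the $\chi(s)=O(s)$ bound for large $s$ handle this adequately). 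The paper's Parseval argument is shorter given that the Fourier transforms \eqref{eqt:FourierTransformDerivativeLeftSide}--\eqref{eqt:FourierTransformDerivativeRightSide} and \eqref{ex3} are already established, and it makes transparent \emph{why} the adjoint relation holds (conjugate multipliers); your approach trades that structural insight for self-containedness.
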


 \begin{proof}
 
    Let $\xi \in \mathcal{S}$ and $\mathcal{M}_{-}^{(\varphi )}\xi=\mathcal{M}_{-}^{(\nu )}\xi$. By noting that $\mathcal{M}_{-}^{(\nu )} 1_{(0,t]}\in L^2(\R)$ by Theorem~\ref{L2} and also that $\mathcal{M}_{+}^{(\nu )} \xi=\sqrt{C_\nu}(\nu_1\star D\xi) + \sqrt{C_\nu}(\nu_2\star D\xi) \in L^2(\R)$ by the Young's inequality, where $D$ is the derivative, we apply the Parseval's formula in $*$ as follows: 

    \begin{eqnarray*}
			\langle \xi, \mathcal{M}_{-}^{(\nu )} 1_{(0,t]} \rangle_{L^2(\R)}&\overset{*}{=}& 	\langle \overline{\hat{\xi}}, \sqrt{C_\nu} g(\mathrm{i}\cdot) \widehat{1_{(0,t]}} \rangle_{L^2(\R)}\\
			&=&\langle \overline{\sqrt{C_\nu} \overline{g(\mathrm{i} \cdot) \widehat{\xi} }}, \widehat{1_{[0,t)]}}\rangle_{L^2(\R)}\\
			&=& \langle \overline{ \widehat{\mathcal{M}_{+}^{(\nu )}\xi)}}, \widehat{1_{[0,t)]}}\rangle_{L^2(\R)}\\
			&\overset{*}{=}& \langle \mathcal{M}_{+}^{(\nu )}\xi, 1_{[0,t)}\rangle_{L^2(\R)}.
		\end{eqnarray*}

    For $\mathcal{M}_{-}^{(\varphi )}\xi=\mathcal{M}_{-}^{(\kappa )}\xi$ the result follows analogously.

\end{proof}

\section{The BG-process in the white noise space}\label{SecProc}

Let us now consider the space $\mathcal{S}^{\prime }$ with its cylindrical
sigma-field $\mathcal{B}$ and let us indicate by $\langle \cdot, \cdot
\rangle$ the dual pairing between $\mathcal{S}$ and $\mathcal{S}^{\prime }$,
defined as an extension of the inner product in $L^2$. In particular, for $%
f\in L^2$ and $\eta \in \mathcal{S}$, we have that $\langle f,
\eta
\rangle=\langle f, \eta \rangle_{L^2}$. We endow $(\mathcal{S}^{\prime },%
\mathcal{B})$ with the white noise measure $\mu $ defined by

\begin{equation}
\int_{\mathcal{S}^{\prime }(\mathbb{R})}e^{i\langle u,\xi \rangle }d\mu
(u)=\exp\left( -\frac{1}{2}\langle \xi ,\xi \rangle_{L^2} \right) ,
\label{def}
\end{equation}%
for $\xi \in \mathcal{S}(\mathbb{R})$. We recall that, thanks to the
properties of the white noise measure $\mu $ (see \cite{OBA}), the following
equalities  hold  for the moments:
\begin{eqnarray}
&&\int_{\mathcal{S}^{\prime }(\mathbb{R})}\langle u,\xi \rangle ^{2p}d\mu
(u) =\frac{(2p)!}{2^{p}p!}\langle \xi ,\xi \rangle_{L^2} ^{p}, \qquad \int_{%
\mathcal{S}^{\prime }(\mathbb{R})}\langle u,\xi \rangle ^{2p+1}d\mu (u) =0,
\label{ss} \\
&&\int_{\mathcal{S}^{\prime }(\mathbb{R})}\langle u,\xi \rangle
\langle u,\eta \rangle d\mu (u) =\langle \xi ,\eta \rangle_{L^2} ,
\label{ss2}
\end{eqnarray}%
for $p\in \mathbb{N}$, $\xi ,\eta \in \mathcal{S}(\mathbb{R}).$ The above equations are extended to the complexified space of square integrable functions $L^2_{\C}$ as well as the following:

\begin{equation}\label{eq:ExpComplexSchwartz}
 \int_{\mathcal{S}^{\prime }(\mathbb{R})} e^{\langle u, \xi \rangle} d\mu (u)=e^{\langle \xi,\xi \rangle/2}, \quad \xi \in L^2_{\C},  \end{equation}
see p.~21 in \cite{OBA}.
We now consider the extension of the dual pairing $\langle \cdot,
\cdot \rangle$ to $\mathcal{S}^{\prime}(\mathbb{R})\times
L^2(\mathbb{R},dx)$ and we recall that, by Theorem \ref{L2},
$\mathcal{D}_{-}^{(\nu )}1_{[0,t)} \in L^2(\mathbb{R},dx)$;
analogously, we proved in Theorem \ref{kap} that
$\mathcal{I}_{-}^{(\kappa )}1_{[0,t)} \in L^2(\mathbb{R},dx)$.

\begin{definition} Let $\varphi
(\cdot)$ be a complete Bernstein function satisfying $\mathbf{(C)}$-$\mathbf{(K)}$ and let $\mathcal{M}_{-}^{(\varphi )}$ be the generalized operator given in Def. \ref{DefOperatorM}
. Then we define on
$(\mathcal{S}^{\prime },\mathcal{B},\mu )$ the
BG-process as%
\begin{equation}
X_{\varphi }(1_{[0,t)}):=\langle \omega ,\mathcal{M}_{-}^{(\varphi
)}1_{[0,t)}\rangle,\qquad t\in \mathbb{R}^{+},\text{ }\text{a.s. }
\omega \in \mathcal{S}^{\prime }(\mathbb{R}).  \label{def4}
\end{equation}
\end{definition}

We now present some distributional properties of
$X_{\varphi }(1_{[0,t)})$ and we prove that, under
appropriate conditions on the Bernstein function, it has a
continuous version and a square-integrable local time.

\begin{theorem}
\label{T1} {Let

\begin{equation*} C_{\varphi}:=\left\{
\begin{array}{l}
C_{\nu},\text{\qquad if }\mathcal{M}_{-}^{(\varphi )}=\mathcal{M}%
_{-}^{(\nu )} \\
C_{\kappa},\text{\qquad if }\mathcal{M}%
_{-}^{(\varphi )}=\mathcal{M}_{-}^{(\kappa )}%
\end{array}%
\right.
\end{equation*}
and let
\begin{equation*} A_{\varphi}(x):=\left\{
\begin{array}{l}
\varphi_1(ix)^{2}+\varphi_2(ix)^{2},\text{\qquad if }\mathcal{M}_{-}^{(\varphi )}=\mathcal{M}%
_{-}^{(\nu )} \\
\left[ \varphi_1(ix)^{2}+\varphi_2(ix)^{2}\right] ^{-1},\text{\qquad if }\mathcal{M}%
_{-}^{(\varphi )}=\mathcal{M}_{-}^{(\kappa )}%
\end{array}%
\right.
\end{equation*}
where $\varphi_1(z):=\mathfrak{R}(\varphi(z))$ and $\varphi_2(z):=\mathfrak{I}(\varphi(z))$, for $z \in \mathbb{C}$,} then for the process $X_{\varphi
}(1_{[0,t)})$ defined in (\ref{def4}) we have that

\begin{enumerate}
\item[$(i)$] It is Gaussian, with zero mean and
\begin{equation*}
\mathrm{Cov}(X_{\varphi }(1_{[0,t)}),X_{\varphi }(1_{[0,s)}))=2C_{\varphi }\int_{\mathbb{%
R}}A_{\varphi}(x)\frac{ 1-\cos (tx)-\cos (sx)+\cos
((t-s)x)}{x^{2}} dx.
\end{equation*}

\item[$(ii)$] For $0<t_{1}...<t_{k},$ $k\in \mathbb{N},$ and $\theta _{j}\in
\mathbb{R},$ $j=1,2,,...k,$ its characteristic function reads
\begin{equation*}
\mathbb{E}e^{i\sum_{j=1}^{k}\theta _{j}X_{\varphi }(1_{[0,t_{j})})}=\exp\left( -%
\frac{1}{2}\left\Vert \sum_{j=1}^{k}\theta
_{j}1_{[0,t_{j})}\right\Vert _{\varphi }^{2}\right),
\end{equation*}
where $\left\Vert \xi\right\Vert_{\varphi }:=\sqrt{\langle \xi,\xi
\rangle _{\varphi }}$ and $\langle \cdot,\cdot \rangle _{\varphi
}$ is either $\langle \cdot,\cdot \rangle _{\nu }$ or $\langle
\cdot,\cdot \rangle _{\kappa }$, defined in Def. \ref{Inner}.

\item[$(iii)$] It has stationary increments

\item[$(iv)$] If the Bernstein function $\varphi(\cdot )$ is such
that the following
condition is satisfied:%
\begin{equation}
\int\limits_{\mathbb{R}}\frac{A_{\varphi}(ax)}{x^{2}}(1-\cos x)dx\leq Ka^{1-%
\frac{q+1}{p}},  \label{ass}
\end{equation}%
for any $a>0$ and some constants $K,q>0$, $p>1,$ with $q<p-1$, then there
exists a version of the process with $\gamma $-Holder continuous sample
paths, for $0<\gamma <q/2p.$
\end{enumerate}
\end{theorem}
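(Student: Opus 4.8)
The plan is to establish the four claims in sequence, since each builds on the previous. For claim $(i)$, I would start from the definition $X_{\varphi}(1_{[0,t)}) = \langle \omega, \mathcal{M}_{-}^{(\varphi)}1_{[0,t)}\rangle$. Since $\mathcal{M}_{-}^{(\varphi)}1_{[0,t)} \in L^2(\mathbb{R})$ by Theorems \ref{L2} and \ref{kap}, the dual pairing is well-defined, and the moment formulas \eqref{ss}-\eqref{ss2} for the white noise measure $\mu$ immediately yield that $X_{\varphi}$ is Gaussian with zero mean and covariance $\langle \mathcal{M}_{-}^{(\varphi)}1_{[0,t)}, \mathcal{M}_{-}^{(\varphi)}1_{[0,s)}\rangle_{L^2}$. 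By the relations \eqref{rel} this equals $C_{\varphi}^{-1}\langle 1_{[0,t)}, 1_{[0,s)}\rangle_{\varphi}$, which via \eqref{eq}-\eqref{eqint} has the Fourier representation $C_{\varphi}\int_{\mathbb{R}} A_{\varphi}(x)\,\overline{\widehat{1_{[0,t)}}(x)}\,\widehat{1_{[0,s)}}(x)\,dx$. Writing out $\widehat{1_{[0,t)}}(x) = (e^{itx}-1)/(ix)$ and using $|\varphi(ix)|^{\pm 2} = (\varphi_1(ix)^2+\varphi_2(ix)^2)^{\pm 1} = A_{\varphi}(x)$, a direct expansion of the product of exponentials into cosines gives the stated covariance after exploiting the evenness of $A_{\varphi}$ to discard the imaginary (sine) terms.

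For claim $(ii)$, I would use the exponential formula \eqref{eq:ExpComplexSchwartz} (or equivalently the Gaussianity just established): the linear combination $\sum_j \theta_j X_{\varphi}(1_{[0,t_j)}) = \langle \omega, \mathcal{M}_{-}^{(\varphi)}\sum_j \theta_j 1_{[0,t_j)}\rangle$ is a centered Gaussian random variable whose variance is $\|\mathcal{M}_{-}^{(\varphi)}\sum_j \theta_j 1_{[0,t_j)}\|_{L^2}^2 = C_{\varphi}^{-1}\|\sum_j \theta_j 1_{[0,t_j)}\|_{\varphi}^2$, so its characteristic function is the claimed exponential. Claim $(iii)$ follows because the $\nu$- and $\kappa$-inner products depend on the indicators only through $\widehat{1_{[0,t)}}$ and the Fourier multiplier $A_{\varphi}(x)$, which is translation-invariant; concretely, the covariance from $(i)$ depends on $s,t$ only through the combination $1-\cos(tx)-\cos(sx)+\cos((t-s)x)$, and one checks that $\mathrm{Var}(X_{\varphi}(1_{[s,t)})) = \mathrm{Var}(X_{\varphi}(1_{[0,t-s)}))$ by a change of variable, giving stationarity of increments.

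The main work is in claim $(iv)$, the Hölder continuity. The strategy is to invoke the Kolmogorov continuity criterion. Because the increment $X_{\varphi}(1_{[s,t)})$ is centered Gaussian with variance $\sigma^2(t-s) := \mathrm{Var}(X_{\varphi}(1_{[0,t-s)}))$ by stationarity of increments, all its moments are controlled by powers of its variance: $\mathbb{E}|X_{\varphi}(1_{[s,t)})|^{2n} = c_n\,\sigma^{2n}(t-s)$. From claim $(i)$ with $s=0$ one has $\sigma^2(a) = 2C_{\varphi}\int_{\mathbb{R}} \frac{A_{\varphi}(x)}{x^2}(1-\cos(ax))\,dx$, and a substitution $x \mapsto x/a$ recasts this as $a\int_{\mathbb{R}} \frac{A_{\varphi}(x/a)}{x^2}(1-\cos x)\,dx$, precisely the quantity bounded in hypothesis \eqref{ass}. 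Thus $\sigma^2(a) \leq 2C_{\varphi}K\, a^{2-(q+1)/p}$, and choosing $n$ large enough that $2n$-th moments give an exponent exceeding $1$ in the Kolmogorov criterion produces a version with $\gamma$-Hölder paths for every $\gamma < q/(2p)$.

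\textbf{Main obstacle.} I expect the delicate point to be making the scaling argument in $(iv)$ rigorous: one must justify the change of variables inside the singular integral defining $\sigma^2(a)$ and confirm convergence both near $x=0$ (where $1-\cos x \sim x^2/2$ tames the $1/x^2$ singularity, but the behavior of $A_{\varphi}(x/a)$ as $a\to 0$ must be monitored) and at infinity. The hypothesis \eqref{ass} is evidently designed to encapsulate exactly this asymptotic bound, so the real task is to verify that the covariance integral from $(i)$ is finite and reduces cleanly to the scaled form, after which Kolmogorov's criterion applies mechanically; translating the exponent $1 - (q+1)/p$ in the variance into the Hölder exponent $q/(2p)$ via the criterion is the last bookkeeping step.
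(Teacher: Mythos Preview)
Your proposal is correct and follows essentially the same route as the paper: both use the white-noise moment identities \eqref{ss}--\eqref{ss2} together with the Fourier representations \eqref{eq}--\eqref{eqint} for (i)--(ii), the translation invariance of $\|1_{[h,t+h)}\|_\varphi = \|1_{[0,t)}\|_\varphi$ for (iii), and the Kolmogorov criterion after the scaling substitution $x\mapsto x/(t-s)$ for (iv). One small algebraic slip to fix in your plan: the bound on the variance should read $\sigma^2(a)\le 2C_\varphi K\,a^{(q+1)/p}$, not $a^{2-(q+1)/p}$ (you apply \eqref{ass} with parameter $1/a$, giving $K(1/a)^{1-(q+1)/p}$, which combines with the factor $a$ from the substitution); taking the $2p$-th moment with the specific $p$ from the hypothesis then yields $\mathbb{E}|X_\varphi(1_{[s,t)})|^{2p}\le K'(t-s)^{q+1}$ and the claimed H\"older exponent $q/(2p)$ follows.
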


\begin{proof}
\begin{enumerate}
\item[$(i)$] The values of mean and covariance follow by considering (\ref{ss}) and (%
\ref{ss2}) respectively: indeed, for  the case where $\mathcal{M}_{-}^{(\varphi )}=\mathcal{M}%
_{-}^{(\nu )}$ we have that

\begin{eqnarray}
&&\mathrm{Cov}(X_{\varphi }(1_{[0,t)}),X_{\varphi }(1_{[0,s)}))  \label{cov2} \\
&=&\langle \mathcal{M}_{-}^{(\nu )}1_{[0,t)},\mathcal{M}_{-}^{(\nu
)}1_{[0,s)}\rangle_{L^2}+\overline{\langle \mathcal{M}_{-}^{(\nu )}1_{[0,t)},%
\mathcal{M}_{-}^{(\nu )}1_{[0,s)}\rangle_{L^2}}  \notag \\
&=&\text{by (\ref{eq})}=\langle 1_{[0,t)},1_{[0,s)}\rangle _{\nu }+\overline{%
\langle 1_{[0,t)},1_{[0,s)}\rangle _{\nu }}  \notag \\
&=&2C_{\nu }\int_{\mathbb{R}}\frac{\varphi_1(ix)^{2}+\varphi_2(ix)^{2}}{x^{2}}\left[ 1-\cos
(tx)-\cos (sx)+\cos ((t-s)x)\right] dx.  \notag
\end{eqnarray}
The case where $\mathcal{M}_{-}^{(\varphi )}=\mathcal{M}%
_{-}^{(\kappa )}$ can be treated by analogous steps.

\item[$(ii)$] The formula for the characteristic function easily follows from the linearity
of definition (\ref{def4}).

\item[$(iii)$] We can check that $X_{\varphi }(1_{[0,t)})$ has
stationary increments, since, for $h>0,$
\begin{eqnarray}
&&\mathbb{E}e^{i\sum_{j=1}^{k}\theta _{j}\left[ X_{\varphi
}(1_{[0,t_{j}+h)})-X_{\varphi }(1_{[0,h)})\right] }  \label{cf} \\
&=&\exp\left( -\frac{1}{2}\left\Vert \sum_{j=1}^{k}\theta
_{j}(1_{[0,t_{j}+h)}-1_{[0,h)})\right\Vert _{\varphi }^{2}\right)  \notag \\
&=&\exp\left( -\frac{1}{2}\left\Vert \sum_{j=1}^{k}\theta
_{j}1_{[h,t_{j}+h)}\right\Vert _{\varphi }^{2}\right) =\exp\left( -\frac{1}{2}%
\left\Vert \sum_{j=1}^{k}\theta _{j}1_{[0,t_{j})}\right\Vert
_{\varphi }^{2}\right) .  \notag
\end{eqnarray}%
The last step follows by considering that $\mathcal{F}\left(1_{[h,t_{j}+h)}%
\right) (x)=(e^{ix(t_{j}+h)}-e^{ixh})/ix=e^{ixh}(e^{ixt_{j}}-1)/ix$ and that
\begin{eqnarray*}
\| 1_{[h,t_j+h)}\|_{\varphi}^2&=& C_{\varphi }\int_{\mathbb{R}}A_{\varphi}(x)\overline{%
\frac{e^{ix(t_{j}+h)}-e^{ixh}}{ix}}\frac{e^{ix(t_{j}+h)}-e^{ixh}}{ix}dx \\
&=& C_{\varphi }\int_{\mathbb{R}}A_{\varphi}(x)e^{-ixh} \frac{e^{-ixt_{j}}-1}{ix}%
e^{ixh} \frac{e^{ixt_{j}}-1}{ix}dx=\|1_{[0,t_j)} \|_{\varphi}^2.
\end{eqnarray*}

\item[$(iv)$] In order to apply the Kolmogorov continuity theorem, it is
enough to prove that the following inequality is satisfied, under the
assumption (\ref{ass}):%
\begin{equation}
\mathbb{E}\left[ \left( X_{\varphi }(1_{[0,t)})-X_{\varphi }(1_{[0,s)})\right) ^{2p}%
\right] \leq K^{\prime }(t-s)^{q+1},  \label{ass2}
\end{equation}%
for some $K^{\prime }>0$ and $s<t$. We can write that%
\begin{eqnarray*}
\mathbb{E}\left[ \left\vert X_{\varphi }(1_{[0,t)})-X_{\varphi
}(1_{[0,s)})\right\vert ^{2p}\right] &=&\int_{\mathcal{S}^{\prime }(\mathbb{R%
})}\langle u,\mathcal{M}_{-}^{(\varphi )}1_{[s,t)}\rangle ^{2p}d\mu (u) \\
&=&[\text{by (\ref{ss})]}=\frac{(2p)!}{2^{p}p!}\langle
\mathcal{M}_{-}^{(\varphi
)}1_{[s,t)},\mathcal{M}_{-}^{(\varphi )}1_{[s,t)}\rangle^{p} \\
&=&[\text{by (\ref{up})]}=C_{\varphi
}^{p}\frac{(2p)!}{2^{p}p!}\left[
\int_{\mathbb{R}}A_{\varphi}(x)\left\vert (\mathcal{F}1_{[s,t)})(x)\right\vert ^{2}dx%
\right] ^{p} \\
&=&C_{\varphi }^{p}\frac{(2p)!}{2^{p}p!}\left[
\int_{\mathbb{R}}A_{\varphi}(x)\left\vert (\mathcal{F}1_{[0,t-s)})(x)\right\vert ^{2}dx%
\right] ^{p}.
\end{eqnarray*}%
Then we obtain that%
\begin{eqnarray}
&&\mathbb{E}\left[ \left\vert X_{\varphi }(1_{[0,t)})-X_{\varphi
}(1_{[0,s)})\right\vert ^{2p}\right]  \label{dis} \\
&=&C_{\varphi }^{p}\frac{(2p)!}{p!}\left[
\int_{\mathbb{R}}\frac{A_{\varphi}(x)}{x^{2}}\left[ 1-\cos
((t-s)x)\right] dx\right] ^{p}
\notag \\
&=&C_{\varphi }^{p}\frac{(2p)!(t-s)^{p}}{p!}\left[ \int_{\mathbb{R}}\frac{%
A_{\varphi}(z/(t-s))}{z^{2}}\left[ 1-\cos z\right] dz%
\right] ^{p}  \notag \\
&\leq &K^{\prime }(t-s)^{q+1},  \notag
\end{eqnarray}%
by (\ref{ass}).
\end{enumerate}
\end{proof}

We now derive the condition on the Bernstein function under which
the process $X_{\varphi }(1_{[0,t)})$ possesses a continuous and
square integrable local time. We recall that a sufficient
condition for these properties is that, for $0<s<t\leq T<+\infty
$, there
exist $(\rho _{0},H)\in (0,+\infty )\times (0,1)$ and a positive function $%
\psi \in L^{1}(\mathbb{R})$ such that the following upper bound holds for
the characteristic function of a measurable process $\{X(t)\}_{t\in (0,T)}$
\begin{equation}
\left\vert \mathbb{E}\left( \exp \left\{ i\theta \frac{X(t)-X(s)}{(t-s)^{H}}%
\right\} \right) \right\vert \leq \psi (\theta ),  \label{bon}
\end{equation}%
for $|t-s|<\rho _{0}$ and $\theta \in \mathbb{R}$ (see \cite{BON} for
details). Let us denote by $L(B,x)$, for the Borel set $B\subseteq \mathbb{R}%
^{+},$ the Radon-Nikodym derivative w.r.t.~the Lebesgue measure of $\mu
_{B}(A)=\int_{B}1_{\{X(s)\in A\}}ds,$ for the Borel set $A\subseteq \mathbb{R%
}$.

\begin{theorem}
\label{T2} If the Bernstein function $\varphi$ satisfies the following condition%
\begin{equation}
\int\limits_{\mathbb{R}}\frac{A_{\varphi}(xa)}{x^{2}}(1-\cos
x)dx\geq Ca^{\beta },  \label{bon2}
\end{equation}%
for any $a>0$ and some constant $\beta \in (0,1)$, then
$X_{\varphi }(1_{[0,t)})$ possesses local time $L([0,t],x)$
continuous in $t$, for $a.e.$ $x\in \mathbb{R}$, and square
integrable with respect to $x$.
\end{theorem}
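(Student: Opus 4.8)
The plan is to verify the sufficient condition for existence of a continuous, square-integrable local time, as encapsulated in the bound \eqref{bon}, with the correct identification of the scaling exponent $H$ and the controlling integrable function $\psi$. The starting point is the explicit characteristic function of the increment established in Theorem~\ref{T1}(ii)--(iii): by stationarity of increments, for $0<s<t$,
\[
\mathbb{E}\left( \exp\left\{ i\theta \frac{X_\varphi(1_{[0,t)})-X_\varphi(1_{[0,s)})}{(t-s)^H} \right\} \right) = \exp\left( -\frac{\theta^2}{2(t-s)^{2H}} \left\Vert 1_{[0,t-s)} \right\Vert_\varphi^2 \right).
\]
Thus the whole matter reduces to controlling $\left\Vert 1_{[0,t-s)} \right\Vert_\varphi^2$ from below. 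Writing $\tau = t-s$ and using the spectral representation of the norm together with the change of variable $z = x\tau$, one has
\[
\left\Vert 1_{[0,\tau)} \right\Vert_\varphi^2 = C_\varphi \int_{\mathbb{R}} A_\varphi(x) \frac{2(1-\cos(\tau x))}{x^2}\, dx = 2C_\varphi\, \tau \int_{\mathbb{R}} \frac{A_\varphi(z/\tau)}{z^2}(1-\cos z)\, dz,
\]
which is exactly the integral appearing in hypothesis \eqref{bon2} (up to the substitution $a = 1/\tau$ and constants).

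The key step is to invoke \eqref{bon2}: applying it with $a = 1/\tau$ yields a lower bound of the form $\left\Vert 1_{[0,\tau)} \right\Vert_\varphi^2 \geq C'\, \tau \cdot \tau^{-\beta} = C'\, \tau^{1-\beta}$ for some constant $C'>0$. This is precisely the growth we need. Setting $H := (1-\beta)/2 \in (0,1/2)$, the exponent $\tau^{1-\beta} = \tau^{2H}$ cancels the factor $\tau^{-2H}$ in the characteristic function, so that for $\tau < \rho_0$ (any fixed $\rho_0 < \infty$) we obtain
\[
\left\vert \mathbb{E}\left( \exp\left\{ i\theta \frac{X_\varphi(1_{[0,t)})-X_\varphi(1_{[0,s)})}{\tau^H} \right\} \right) \right\vert \leq \exp\left( -\frac{C'\theta^2}{2} \right) =: \psi(\theta).
\]
Since $\psi(\theta) = \exp(-C'\theta^2/2)$ is a Gaussian, it is manifestly a positive function in $L^1(\mathbb{R})$, and $H \in (0,1)$, so the pair $(\rho_0, H)$ together with $\psi$ satisfies all the requirements of \eqref{bon}.

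The conclusion then follows by citing the general criterion of \cite{BON}: the uniform bound \eqref{bon} on the normalized increment's characteristic function guarantees that $X_\varphi(1_{[0,t)})$ admits a local time $L([0,t],x)$ that is jointly measurable, continuous in $t$ for almost every $x$, and square integrable in $x$. I do not expect any serious analytical obstacle here, since the main inequality \eqref{bon2} has been hypothesized precisely to produce the required lower bound; the only care needed is the bookkeeping of the change of variable (correctly matching $a=1/\tau$ and tracking the constant $C_\varphi$) and checking that the resulting $H$ lies in $(0,1)$, which is automatic from $\beta \in (0,1)$. The substantive content is really the earlier identification of the covariance structure and the spectral form of $\|\cdot\|_\varphi$; the present theorem is a fairly direct application of the abstract local-time existence result once \eqref{bon2} is in hand.
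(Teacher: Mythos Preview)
Your proposal is correct and follows essentially the same approach as the paper: you identify $H=(1-\beta)/2$, use stationarity of increments and the spectral form of $\|1_{[0,\tau)}\|_\varphi^2$ with the change of variable $z=x\tau$, apply hypothesis \eqref{bon2} with $a=1/\tau$ to obtain a Gaussian bound $\psi(\theta)=e^{-C'\theta^2}$, and then invoke the criterion \eqref{bon} from \cite{BON}. The paper's proof proceeds in exactly this way, with only cosmetic differences in how the constants are tracked.
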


\begin{proof}
From (\ref{cf}) we have that
\begin{eqnarray*}
&&\left\vert \mathbb{E}\left(\exp \left\{ i\theta \frac{X_{\varphi
}(1_{[0,t)})-X_{\varphi
}(1_{[0,s)})}{(t-s)^{(1-\beta )/2}}\right\} \right) \right\vert \\
&=&\exp \left\{ -\frac{\theta ^{2}}{2|t-s|^{1-\beta }}\left\Vert
1_{[0,|t-s|)}\right\Vert _{\varphi }^{2}\right\} \\
&=&\exp \left\{ -\frac{\theta ^{2}C_{\varphi }|t-s|}{2|t-s|^{1-\beta }}\int_{%
\mathbb{R}}\frac{A_{\varphi}(z/|t-s|)}{z^{2}}\left[ 1-\cos
z\right] dz\right\} \\
&\leq &\exp \left\{ -\theta ^{2}C^{\prime }\right\} :=\psi (\theta ),
\end{eqnarray*}%
for a constant $C^{\prime }>0$, where, in the last step, we have applied (%
\ref{bon2}). Thus the $L([0,t],x)$ is continuous in $t$, for $a.e.$ $x\in
\mathbb{R}$, and square integrable with respect to $x$, by (\ref{bon}),
since $\psi (\theta )\in L^{1}(\mathbb{R})$.
\end{proof}
\begin{remark}
As a particular case, for $\varphi(x)=x^{(1-\alpha )/2}$ and $\alpha \in (0,1)$, the
process $X_{\varphi }(1_{[0,t)})$ coincides with the fractional
Brownian motion $B_{t}^{H}$, with Hurst parameter
$H=\alpha /2<1/2$ (see \cite{MUR} for details on the special cases),
since $(ix)^{(1-\alpha )/2}=|x|^{(1-\alpha)\mathrm{sign}(x)/4}$ and thus $\varphi_1(x)^{2}+\varphi_2(x)^{2}=|x|^{1-\alpha }$.
On the one hand, in this case, the condition (\ref{ass}) is satisfied with
$(q+1)/p=\alpha $, for
any $p$ such that $\alpha p>1$, since we have that%
\begin{equation}
\int_{\mathbb{R}}\frac{(ax)^{1-\alpha }}{x^{2}}\left[ 1-\cos x\right]
dx=Ka^{1-\alpha },  \label{fbm}
\end{equation}%
and $\mathbb{E}\left[ \left\vert X_{\varphi
}(1_{[0,t)})-X_{\varphi }(1_{[0,s)})\right\vert ^{2p}\right] \leq
K^{\prime }(t-s)^{\alpha p}.$ Thus we obtain the well-known result
of the $\gamma $-H\"{o}lder continuity of the fractional Brownian
motion, with $0<\gamma <H$, since $\gamma =q/(2p)=(\alpha
/2)-1/(2p)$. Moreover, for $\alpha =1$, it agrees with the
well-known H\"{o}lder property of the Brownian motion.

On the other hand, the condition (\ref{bon2}), which is satisfied for $\beta
=1-\alpha $, in view of (\ref{fbm}), guarantees the continuity and square
integrability of the local time for the fractional Brownian motion, which is
already well-known (see \cite{BON} and \cite{AZM}).
\end{remark}

    \section{The BG-noise}\label{SecNoise}

    In order to define the noise as a distribution, we need to introduce the space of distributions
     $(\mathcal{S})^{-1}$. In the Appendix B, we recall some well-known properties and the construction
     of spaces in white noise analysis, in order to give the definition of $S$-transform. Based on these and on the
     characterization theorems, we introduce here the BG-noise and we compute its $S$-transform.
     Then, we are also able to define the BG-Ornstein-Uhlenbeck process. 

    \begin{theorem}\label{thm:NoiseExistence}
        Let the assumptions \textbf{(A1)-(A2)} hold on $\nu$ and  \textbf{(B1)-(B2)} hold for $\kappa$, then the generalized process $(0,\infty) \ni t \mapsto X_{\varphi}(1_{[0,t)}) \in (\mathcal{S})^{-1}$ is differentiable, i.e. the following limit

        \[   \lim_{h \to 0} \frac{X_{\varphi}(1_{[0,t+h)}) - X_{\varphi}(1_{[0,t)})}{h}, \quad t>0 \]
        exists and converges in $(\mathcal{S})^{-1}$ to the element denoted by $\mathscr{N}_t^{\varphi}$, which fulfils 
\[  S (\mathscr{N}_t^{\varphi})(\xi)=(\mathcal{M}_{+}^{(\varphi )}\xi)(t), \]
        for every $\xi$ in a suitable neighborhood of zero $U_{p',q}=\{\xi \in \mathcal{S}_{\mathbb{C}} \mid 2^q \|\xi\|_{p'}^2<1 \}\subset \mathcal{S}_{\mathbb{C}}$, where $p',q \in \mathbb{N}$.
    \end{theorem}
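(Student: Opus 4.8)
The plan is to reduce the statement to the characterization and convergence theorem for the Kondratiev space $(\mathcal{S})^{-1}$ recalled in Appendix~\ref{App:HidaDistribution}: a family of distributions converges in $(\mathcal{S})^{-1}$ as soon as their $S$-transforms are holomorphic and uniformly bounded on one common neighborhood $U_{p',q}$ of the origin and converge pointwise there; the limit then has $S$-transform equal to the pointwise limit. Accordingly, I would set $\Phi_h := h^{-1}\big(X_{\varphi}(1_{[0,t+h)}) - X_{\varphi}(1_{[0,t)})\big) = h^{-1}\langle \omega, \mathcal{M}_{-}^{(\varphi)}1_{[t,t+h)}\rangle$, which is a first-chaos element of $(\mathcal{S})^{-1}$, and check the three hypotheses of that theorem.

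First I would compute the $S$-transform of $\Phi_h$. For a first-order functional $\langle \omega, f\rangle$ with $f\in L^{2}(\mathbb{R})$, differentiating the exponential identity \eqref{eq:ExpComplexSchwartz} in the direction $f$ gives $S\langle\omega,f\rangle(\xi)=\langle f,\xi\rangle$ (bilinear pairing), so that $(S\Phi_h)(\xi)=h^{-1}\langle \mathcal{M}_{-}^{(\varphi)}1_{[t,t+h)},\xi\rangle$. Writing $1_{[t,t+h)}=1_{[0,t+h)}-1_{[0,t)}$ and invoking the integration-by-parts rule of Lemma~\ref{lem:IntegrPartsFormul}, extended by linearity to $\xi\in\mathcal{S}_{\mathbb{C}}$, transfers the operator onto the test function:
\[ (S\Phi_h)(\xi)=\frac{1}{h}\,\langle 1_{[t,t+h)},\mathcal{M}_{+}^{(\varphi)}\xi\rangle=\frac{1}{h}\int_{t}^{t+h}(\mathcal{M}_{+}^{(\varphi)}\xi)(s)\,ds. \]
Since $\mathcal{M}_{+}^{(\varphi)}\xi$ is continuous (indeed smooth, by Remark~\ref{rem:FractIntegralSmooth}), the fundamental theorem of calculus yields $\lim_{h\to 0}(S\Phi_h)(\xi)=(\mathcal{M}_{+}^{(\varphi)}\xi)(t)$ for each fixed $\xi$, which identifies the candidate $S$-transform of the limit. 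The holomorphy hypothesis is immediate here, because $\xi\mapsto(S\Phi_h)(\xi)$ is linear, hence entire on $\mathcal{S}_{\mathbb{C}}$.

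The decisive step — and the only place where the structural assumptions \textbf{(A1)-(A2)}/\textbf{(B1)-(B2)} enter — is the uniform boundedness on a fixed neighborhood, which is exactly what Lemma~\ref{lem:ContinuityFracOpera} delivers: it provides $p_{\varphi}\in\mathbb{N}$ and $C_{\varphi}>0$ with $\|\mathcal{M}_{+}^{(\varphi)}\xi\|_{L^{\infty}(\mathbb{R})}\leq C_{\varphi}\|\xi\|_{p_{\varphi}}$. Choosing $p'\geq p_{\varphi}$ and any $q\in\mathbb{N}$, for $\xi\in U_{p',q}$ one estimates
\[ |(S\Phi_h)(\xi)|\leq \|\mathcal{M}_{+}^{(\varphi)}\xi\|_{L^{\infty}(\mathbb{R})}\leq C_{\varphi}\|\xi\|_{p'}<C_{\varphi}\,2^{-q/2}, \]
a bound uniform in both $h$ and $\xi\in U_{p',q}$.

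With pointwise convergence, holomorphy, and uniform boundedness on $U_{p',q}$ established, the convergence theorem guarantees that $\Phi_h\to\mathscr{N}_{t}^{\varphi}$ in $(\mathcal{S})^{-1}$ and that $S(\mathscr{N}_{t}^{\varphi})(\xi)=(\mathcal{M}_{+}^{(\varphi)}\xi)(t)$ on $U_{p',q}$, as claimed. I expect the main obstacle to be precisely this uniform-boundedness estimate: it is the continuity of $\mathcal{M}_{+}^{(\varphi)}$ into $L^{\infty}(\mathbb{R})$ that confines all the difference-quotient $S$-transforms to a single common neighborhood of zero, which is the reason Lemma~\ref{lem:ContinuityFracOpera} was proved beforehand; the remaining ingredients (the first-chaos $S$-transform formula, the integration-by-parts transfer, and the fundamental theorem of calculus) are routine once this is secured.
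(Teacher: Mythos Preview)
Your proposal is correct and follows essentially the same route as the paper: you use the same three ingredients---the first-chaos $S$-transform formula (which the paper derives by differentiating \eqref{eq:ExpComplexSchwartz}), the integration-by-parts transfer of Lemma~\ref{lem:IntegrPartsFormul}, and the $L^{\infty}$-continuity bound of Lemma~\ref{lem:ContinuityFracOpera}---and then appeal to the convergence criterion of Theorem~\ref{thm:CharctConvergSequenceHidaSpace}. The only cosmetic difference is that the paper works with a sequence $h_n\to 0$ (matching the sequential formulation of Theorem~\ref{thm:CharctConvergSequenceHidaSpace}) whereas you keep $h$ continuous; since your bound is uniform in $h$, this is immaterial.
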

    \begin{proof}
        Let $t>0$ and define the elements of the sequence $\{ \mathscr{N}_{t,n}^{\varphi}\}_{n \in \mathbb{N}}$ by

        \[  \mathscr{N}_{t,n}^{\varphi}:= \frac{X_{\varphi}(1_{[0,t+h_n)}) - X_{\varphi}(1_{[0,t)})}{h_n}, \quad n \in \mathbb{N},  \]
        where $\{ h_n\}_{n \in \mathbb{N}}$ is a sequence of real numbers such that $h_n \to 0$, for $n \to \infty$. By Theorem~\ref{thm:CharctConvergSequenceHidaSpace}, the sequence $\{ \mathscr{N}_{t,n}^{\varphi}\}_{n \in \mathbb{N}}$ is in $ (\mathcal{S})^{-1}$ and converges strongly in $(\mathcal{S})^{-1}$. Indeed, in the following step (1), we compute the S-transform of each $\mathscr{N}_{t,n}^{\varphi}$, on $U_{p,q}\supset U_{p^{'},q}$, with $p<p^{'}$. In order to apply Theorem~\ref{thm:CharctConvergSequenceHidaSpace}, we prove that $S(\mathscr{N}_{t,n}^{\varphi})(\xi)$ is a Cauchy sequence for all $\xi \in U_{p,q}$ and that $S(\mathscr{N}_{t,n}^{\varphi})(\xi)$ is bounded and holomorphic on $U_{p^{'},q}$, in steps (2) and (3), respectively.

        \begin{enumerate}

            \item For $\omega \in \mathcal{S}'$ and $s \in [-1,1]$, we define the function
            $f(\omega,s):=\exp(\langle\omega, \xi + s \mathcal{M}_{-}^{(\varphi )}1_{[0,t)} \rangle)$
             which is in $L^1(\mu)$, by Equation~\eqref{eq:ExpComplexSchwartz}, and is differentiable w.r.t.~$s$:
            \[ \frac{d}{d s} f(\omega, s)=\langle \omega, \mathcal{M}_{-}^{(\varphi )}1_{[0,t)}\rangle e^{\langle\omega, \xi + s \mathcal{M}_{-}^{(\varphi )}1_{[0,t)} \rangle}.\]
            For each $s \in [-1,1]$ and $\omega \in \mathcal{S}'$, we have that $|\frac{d}{d s} f(\omega, s)|<g(\omega)$, where
            \[  g(\omega)= e^{\langle \omega, \xi_1 \rangle} e^{2|\langle \omega, \mathcal{M}_{-}^{(\varphi )} 1_{[0,t)}\rangle|}, \]
            and $\xi_1=\Re(\xi)$.\\
            By the H\"{o}lder's inequality we have that $g \in L^1(\mu)$, because the functions
            $ e^{\langle \omega, \xi_1 \rangle}$ and $e^{2|\langle \omega, \mathcal{M}_{-}^{(\varphi )} 1_{[0,t)}\rangle|}$
            are in $L^2(\mu)$, by Equation~\eqref{eq:ExpComplexSchwartz}. Hence, for any $\xi \in U_{p,q}$, we
            have that

            \begin{eqnarray*}
                S(X_{\varphi}(1_{[0,t)}))(\xi)&=&e^{-\frac{1}{2}\langle \xi, \xi \rangle}\int_{\mathcal{S}'}\langle \omega ,\mathcal{M}_{-}^{(\varphi )}1_{[0,t)}\rangle e^{\langle \omega, \xi\rangle } \mu(d\omega)\\
                &=&e^{-\frac{1}{2}\langle \xi, \xi \rangle} \int_{\mathcal{S}'} \frac{d}{d s} e^{\langle \omega, \xi\rangle + s\langle \omega,\mathcal{M}_{-}^{(\varphi )}1_{[0,t)}  \rangle}\Big|_{s=0} \mu(d\omega)\\
                &\overset{*}{=}&e^{-\frac{1}{2}\langle \xi, \xi \rangle} \frac{d}{d s}\int_{\mathcal{S}'}  e^{\langle \omega, \xi + sM_{-}^{(\varphi)}1_{[0,t)}  \rangle} \mu(d\omega)\Big|_{s=0}\\
                &\overset{**}{=}&e^{-\frac{1}{2}\langle \xi, \xi \rangle} \frac{d}{d s} e^{\frac{1}{2} \langle \xi + s\mathcal{M}_{-}^{(\varphi )}1_{[0,t)}, \xi + s\mathcal{M}_{-}^{(\varphi)}1_{[0,t)} \rangle }\Big|_{s=0}\\
                &=&\langle \xi, \mathcal{M}_{-}^{(\varphi )}1_{[0,t)}\rangle
            \end{eqnarray*}
by interchanging the derivative and integral in $*$, as $g \in L^1(\mu)$, and by using again Equation~\eqref{eq:ExpComplexSchwartz} in $**$.

            Finally, we have that

            \begin{equation}\label{eq:STransformBrown}
            S(X_{\varphi}(1_{[0,t)}))(\xi)=\langle \xi, \mathcal{M}_{-}^{(\varphi )}1_{[0,t)}\rangle=\int_0^t (\mathcal{M}_{+}^{(\varphi )}\xi) (x)dx, \end{equation}
            where, in the last equality, we use Lemma \ref{lem:IntegrPartsFormul} and the linearity of the operator $\mathcal{M}_{+}^{(\varphi )}$.

            \item   
            We apply the $S$-transform to $\mathscr{N}_{t,n}^{\varphi}$ with $\xi \in U_{p,q}$ :

             \begin{eqnarray*}
                S\big(\mathscr{N}_{t,n}^{\varphi} \big)(\xi)&=&\frac{1}{h_n}\Big( S(X_{\varphi}(1_{[0,t+h)}))(\xi) - S(X_{\varphi}(1_{[0,t)}))(\xi)\Big)\\
                &=&\frac{ \langle \xi, \mathcal{M}_{-}^{(\varphi)}1_{[0,t+h_n)}\rangle-\langle \xi, \mathcal{M}_{-}^{(\varphi)}1_{[0,t)}  \rangle }{h_n}\\
                &\overset{*}{=}&\frac{ \langle \mathcal{M}_{+}^{(\varphi)}\xi, 1_{[0,t+h_n)}\rangle-\langle \mathcal{M}_{+}^{(\varphi)}\xi, 1_{[0,t)}  \rangle}{h_n}\\
                &=&\frac{\langle \mathcal{M}_{+}^{(\varphi)}\xi, 1_{[t,t+h_n)}\rangle}{h_n}=\frac{\int_t^{t+h_n} (\mathcal{M}_{+}^{(\varphi)}\xi)(x) dx}{h_n},
            \end{eqnarray*}
            using Lemma \ref{lem:IntegrPartsFormul} in $*$. We compute its limit, thanks to the continuity of $(\mathcal{M}_{+}^{(\varphi )}\xi) (\cdot)$ (see Remark \ref{rem:FractIntegralSmooth}),  as
            \[   \lim_{n \to \infty}\frac{1}{h_n} \int_t^{t+h_n}(\mathcal{M}_{+}^{(\varphi )}\xi) (x)dx=(\mathcal{M}_{+}^{(\varphi )}\xi) (t), \]
            so that $\{S \big(\mathscr{N}_{t,n}^{\varphi} \big)(\xi)\}_{n \in \mathbb{N}}$ is a Cauchy sequence.\\
            \item For each $n$, it is easy to prove that $S(\mathscr{N}_{t,n}^{\varphi} \big)(\cdot)=h_n^{-1}\langle \cdot, \mathcal{M}_{-}^{(\varphi )}1_{[t,t+h_n)} \rangle $
            is holomorphic on $U_{p,q}$. Now we prove the boundedness of the $S$-transform in a neighborhood of zero inside $U_{p,q}$.
            We apply the $S$-transform of $\mathscr{N}_{t,n}^{\varphi}$, for $\xi \in U_{p,q}$:  \[  S(\mathscr{N}_{t,n}^{\varphi})(\xi)=\frac{1}{h_n} \int_t^{t+h_n}(\mathcal{M}_{+}^{(\varphi )}\xi) (x)dx .   \]
Moreover, applying Lemma \ref{lem:ContinuityFracOpera}, we get that there exist $p' \in \mathbb{N}$ and $C_\varphi>0$ such that
            \[ h_n^{-1}\int_{t}^{t+h_n} |\mathcal{M}_{+}^{(\varphi)} \xi(x)| dx\leq \max_{x \in \mathbb{R}}(|\mathcal{M}_{+}^{(\varphi)} \xi(x)|)\leq C_\varphi\|\xi\|_{p'},\]
            for $\xi \in U_{p',q}\subset U_{p,q}$. Hence, we get that

            \[  |S \big(\mathscr{N}_{t,n}^{\varphi} \big)(\xi)|\leq C_\varphi\|\xi\|_{p'},\]
            for each $n \in \mathbb{N}$.
        \end{enumerate}
        Finally, we apply Theorem \ref{thm:CharctConvergSequenceHidaSpace} to establish the convergence of $\mathscr{N}_{t,n}^{\varphi}$ to an element in $(\mathcal{S})^{-1}$
       (denoted by $\mathscr{N}^{\varphi}_t$) fulfilling

        \[ S(\mathscr{N}^{\varphi}_t)(\xi)=\lim_{n \to \infty} S(\mathscr{N}_{t,n}^{\varphi})(\xi), \quad \xi \in U_{p',q}. \qedhere \]
    \end{proof}
    \begin{remark}
        A further analysis of the operators $\mathcal{M}^{(\varphi)}_{\pm}$ would be needed to prove the existence of noise by construction in Hida distribution space (see Appendix~\ref{App:HidaDistribution}).
    \end{remark}

    \begin{definition}
        Let the assumptions \textbf{(A1)-(A2)} hold for $\nu$ and  \textbf{(B1)-(B2)} hold for $\kappa$, then we define the BG-noise as the derivative of $X_\varphi(1_{[0,t)})$ in $(\mathcal{S})^{-1}$ and we denote it as $(\mathscr{N}^\varphi_t)_{t\geq0}$.
    \end{definition}

    \subsection{BG-Ornstein-Uhlenbeck Process}

    In this subsection we introduce the process through the following integral equation:

    \begin{equation}\label{eq:OUIntegralForm} U^{\varphi}_t=u_0-\theta \int_0^t U^{\varphi}_sds + \sigma X_{\varphi}(1_{[0,t)}), \quad t\geq 0, \end{equation}
    where $\theta >0$ and $\sigma \in \mathbb{R}$, following the methodology given in \cite{BOC1}.\\
    By applying the $S$-transform to \eqref{eq:OUIntegralForm} thanks to 
    Theorem \ref{thm:STransfoIsomo}, we find the differential equation solved by the $S$-transform of the process.
    For $\xi \in U_{p,q}=\{\xi \in \mathcal{S}_{\mathbb{C}} \mid 2^q \|\xi\|_p^2<1 \}$, we have that
    \begin{eqnarray*}
        S(U^{\varphi}_t)(\xi)&=&u_0-\theta S\left(\int_0^t U^{\varphi}_s ds \right)(\xi) + \sigma S( X_{\varphi}(1_{[0,t)}))(\xi)\\
        &\overset{*}{=}&u_0-\theta \int_0^t S(U^{\varphi}_s)(\xi) ds + \sigma  S( X_{\varphi}(1_{[0,t)}))(\xi)\\
        &\overset{**}{=}&u_0-\theta \int_0^t S(U^{\varphi}_s)(\xi) ds + \sigma \langle \xi, \mathcal{M}^{(\varphi)}_{-}1_{[0,t)}\rangle,\\
    \end{eqnarray*}
    where we use Theorem~6 in \cite{KON} in $*$ and Equation~\eqref{eq:STransformBrown} in $**$.\\

    We denote $u(t):=S(U^{\varphi}_t)(\xi)$, so that we have

    \[  u(t)=u_0-\theta \int_0^t u(s) ds + \sigma \int_0^t \left(\mathcal{M}^{(\varphi)}_{+}\xi \right) (x)dx,  \]
    for $t \geq 0$, which is equivalent to

    \[  \begin{cases}
        u'(t)=-\theta u(t)+\sigma \left(\mathcal{M}^{(\varphi)}_{+}\xi \right) (t), & t> 0,\\
        u(0)=u_0, & t=0,
    \end{cases} \]
   since $\left(\mathcal{M}^{(\varphi)}_{+}\xi \right) (t)$ is continuous, by Remark~\ref{rem:FractIntegralSmooth}.
    We solve the above ODE, obtaining

    \begin{eqnarray*}  u(t)&=&u_0e^{-\theta t} + \sigma \left(\int_0^t \left(\mathcal{M}^{(\varphi)}_{+}\xi \right) (s) ds - \theta \int_0^t e^{\theta (s-t)} \int_0^s \left(\mathcal{M}^{(\varphi)}_{+}\xi \right) (u) du ds  \right)\\
        &=&u_0e^{-\theta t} + \sigma \left(\langle \xi, \mathcal{M}^{(\varphi)}_{-}1_{[0,t)}\rangle - \theta \int_0^t e^{\theta (s-t)} \langle \xi, \mathcal{M}^{(\varphi)}_{-}1_{[0,s)}\rangle ds  \right)\\ 
        &\overset{*}{=}&u_0e^{-\theta t} + \sigma S(X_{\varphi}(1_{[0,t)}))(\xi) - \theta \int_0^t e^{\theta (s-t)} S(X_{\varphi}(1_{[0,s)}))(\xi) ds,\\ 
        \end{eqnarray*}
    where we use Equation~\eqref{eq:STransformBrown} in $*$. By applying Theorem 6 in \cite{KON} on the third term, we invert the $S$-transform in order to obtain

    \[ U^{\varphi}_t=u_0 e^{-\theta t} + \sigma X_{\varphi}(1_{[0,t)}) -  \sigma \theta \int_0^t e^{\theta (s-t)} X_{\varphi}(1_{[0,s)}) ds, \quad t\geq 0.  \]

    \begin{definition}
        Let $\theta>0$ and $\sigma \in \mathbb{R}$. The solution $U^{\varphi}_t$, $t\geq0$, of the integral equation~\eqref{eq:OUIntegralForm} is called BG-Ornstein-Uhlenbeck process. It is a Gaussian process with characteristic function, expectation and covariance function given, respectively, by
        \begin{eqnarray}
        &&\mathbb{E}\left(e^{\mathrm{i} k U^{\varphi}_t }\right)=\exp\left(\mathrm{i}ku_0e^{-\theta t} - \frac{k^2}{2}\|h_{\varphi,t}\|^2 \right), \notag\\
         && \mathbb{E}\left(U^{\varphi}_t\right)=u_0 e^{-\theta t},\notag\\
          && Cov(U^{\varphi}_{t_1} U^{\varphi}_{t_2})=\sigma^2\langle h_{\varphi,t_1} , h_{\varphi,t_2} \rangle, \quad t_1,t_2 \geq 0, \notag 
          \end{eqnarray} 
        where $h_{\varphi,t}(x)=\mathcal{M}^{(\varphi)}_{-}1_{[0,t)}(x) - \theta \int_0^t e^{\theta (s-t)} \mathcal{M}^{(\varphi)}_{-}1_{[0,s)}(x) ds$, with $x \in \R$.
    \end{definition}

\section{Asymptotic behavior of the variance and covariance
functions}\label{SecVar}

We now present a different representation of the
generalized process, in the
Mandelbrot-van Ness sense: let $t\geq 0$, $u\in \mathbb{R}$ and let%
\begin{equation*}
k_{\varphi }(t,u):=\left\{
\begin{array}{c}
\sqrt{C_{\nu }}[\nu ((t-u)_{+})-\nu ((-u)_{+})],\qquad \text{if }\mathcal{M}%
_{-}^{(\varphi )}=\mathcal{M}_{-}^{(\nu )} \\
\sqrt{C_{\kappa }}\left[ \chi ((t-u)_{+})-\chi ((-u)_{+}\right]
,\qquad
\text{if }\mathcal{M}_{-}^{(\varphi )}=\mathcal{M}_{-}^{(\kappa )}%
\end{array}%
\right.
\end{equation*}%
then, we define the following process
 \[B_{t}^{\varphi }:=\int_{\mathbb{R}%
}k_{\varphi }(t,u)dW_{u}, \qquad t\geq 0,
\]
where $W_{t}$, for $t\in \mathbb{R},$
is the two-sided Wiener process.

Therefore, the process $B_{t}^{\varphi }$ is Gaussian and such that $\mathbb{%
E}B_{t}^{\varphi }=0$, for any $t\geq 0$. Recall that the integral $\int_{\mathbb{R}%
}k_{\varphi }(t,u)^{2}du<+\infty $, for any $t \geq 0$, by Theorems
\ref{L2} and \ref{kap}, under $\textbf{(A1)}$-$\textbf{(A2)}$ and $\textbf{(B1)}$- $\textbf{(B2)}$, respectively. 
Thus we can write, when $\mathcal{M}%
_{-}^{(\varphi )}=\mathcal{M}_{-}^{(\nu )} $, that
\begin{eqnarray}
\mathbb{E}\left( B_{t}^{\varphi }\right) ^{2}
&=&\int_{\mathbb{R}}k_{\varphi }(t,u)^{2}du=C_{\nu }\left\{
\int_{\mathbb{-\infty }}^{0}\left[ \nu (t-u)-\nu (-u)\right]
^{2}du+\int_{0}^{t}\nu (t-u)^{2}du\right\}
\label{asy} \\
&=&C_{\nu }\left\{ \int_{0}^{+\infty }\left[ \int_{0}^{t}\overline{\nu }
(z+u)dz\right] ^{2}du+\int_{0}^{t}\nu (u)^{2}du\right\} .  \notag
\end{eqnarray}
Analogously, when $\mathcal{M}%
_{-}^{(\varphi )}=\mathcal{M}_{-}^{(\kappa )} $, the variance reads%
\begin{equation}
\mathbb{E}\left( B_{t}^{\varphi }\right) ^{2}=C_{\kappa }\left\{
\int_{0}^{+\infty }\left[ \int_{0}^{t}\kappa (z+u)dz\right]
^{2}du+\int_{0}^{t}\chi (u)^{2}du\right\}.  \label{variance}
\end{equation}
In (\ref{asy}) and (\ref{variance}), the constants $C_{\nu}$ and $C_{\kappa}$  are equal to $\int_{\mathbb{R}%
}k_{\varphi }(1,u)^{2}du<+\infty $, for $\mathcal{M}%
_{-}^{(\varphi )}=\mathcal{M}_{-}^{(\nu )}$ and $\mathcal{M}%
_{-}^{(\varphi )}=\mathcal{M}_{-}^{(\kappa )}$, respectively.
Moreover, we have that, for $t,h>0,$%
\begin{eqnarray*}
&&B_{t+h}^{\varphi }-B_{t}^{\varphi }=\int_{\mathbb{R}}\left[
k_{\varphi
}(t+h,u)-k_{\varphi }(t,u)\right] dW_{u} \\
&=&\int_{-\infty }^{t}\left[ k_{\varphi }(t+h,u)-k_{\varphi
}(t,u)\right] dW_{u}+\int_{t}^{t+h}k_{\varphi
}(t+h,u)dW_{u}=:[I_{1}+I_{2}].
\end{eqnarray*}%
For the first integral the following equality in distribution
holds, by
considering that $W_{t}$ has stationary increments:%
\begin{equation*}
I_{1}=\int_{-\infty }^{t}\left[ k_{\varphi }(t+h,u)-k_{\varphi
}(t,u)\right] dW_{u}\overset{d}{=}\int_{-\infty }^{0}\left[
k_{\varphi }(h,z)-k_{\varphi }(0,z)\right] dW_{z}
\end{equation*}%
and, analogously, $I_{2}\overset{d}{=}\int_{0}^{h}k_{\varphi
}(h,z)dW_{z}.$
Thus we can write that $B_{t+h}^{\varphi }-B_{t}^{\varphi }\overset{d}{=}%
B_{h}^{\varphi }$ and the covariance of $B_{t}^{\varphi }$ reads%
\begin{eqnarray}
\mathrm{Cov}\left[ B_{t}^{\varphi },B_{s}^{\varphi }\right]  &=&\mathbb{E}%
\left( B_{t}^{\varphi }B_{s}^{\varphi }\right) =\frac{1}{2}\left[ \mathbb{E}%
\left( B_{t}^{\varphi }\right) ^{2}+\mathbb{E}\left(
B_{s}^{\varphi }\right) ^{2}-\mathbb{E(}B_{t}^{\varphi
}-B_{s}^{\varphi })^{2}\right]   \label{cov}
\\
&=&\frac{1}{2}\left[ \mathbb{E}\left( B_{t}^{\varphi }\right) ^{2}+\mathbb{E}%
\left( B_{s}^{\varphi }\right) ^{2}-\mathbb{E(}B_{|t-s|}^{\varphi })^{2}%
\right] .  \notag
\end{eqnarray}%
Since both $B_{t}^{\varphi }$ and $X_{\varphi }(1_{[0,t)})$ are
centered Gaussian processes, in order to check that they are equal
in the sense of the finite dimensional distributions, we only need
to prove the equivalence of the covariance. In the first case,
i.e. when $\mathcal{M}_{-}^{(\varphi
)}=\mathcal{M}_{-}^{(\nu )},$ by recalling Theorem \ref{thmFourierconvolution} and applying the Plancherel's theorem:

\begin{eqnarray*}
&&\mathrm{Cov}\left[ B_{t}^{\varphi },B_{s}^{\varphi }\right]  \\
&=&\frac{1}{2}C_{\nu }\int_{\mathbb{R}}\left[ \nu (t-u)_{+}-\nu (-u)_{+}%
\right] ^{2}du+\frac{1}{2}C_{\nu }\int_{\mathbb{R}}\left[ \nu
(s-u)_{+}-\nu
(-u)_{+}\right] ^{2}du- \\
&&-\frac{1}{2}C_{\nu }\int_{\mathbb{R}}\left[ \nu (|t-s|-u)_{+}-\nu (-u)_{+}%
\right] ^{2}du \\
&=&\frac{1}{2}C_{\nu }\int_{\mathbb{R}}\varphi (ix)\widehat{1_{[0,t)}}(x)%
\overline{\varphi (ix)\widehat{1_{[0,t)}}(x)}dx+\frac{1}{2}C_{\nu }\int_{%
\mathbb{R}}\varphi (ix)\widehat{1_{[0,s)}}(x)\overline{\varphi (ix)\widehat{%
1_{[0,s)}}(x)}dx- \\
&&-\frac{1}{2}C_{\nu }\int_{\mathbb{R}}\varphi (ix)\widehat{1_{[0,|t-s|)}}(x)%
\overline{\varphi (ix)\widehat{1_{[0,|t-s|)}}(x)}dx \\
&=&\frac{1}{2}\left[ \mathbb{E}\left( X_{\varphi }(1_{[0,t)})\right) ^{2}+%
\mathbb{E}\left( X_{\varphi }(1_{[0,s)})\right)
^{2}-\mathbb{E(}X_{\varphi
}(1_{[0,|t-s|)}))^{2}\right]  \\
&=&\mathrm{Cov}\left( X_{\varphi }(1_{[0,t)}),X_{\varphi
}(1_{[0,s)})\right) .
\end{eqnarray*}%
In the last step, we have considered the stationarity of the increments of $%
X_{\varphi }(1_{[0,t)})$. The case where $\mathcal{M}_{-}^{(\varphi )}=%
\mathcal{M}_{-}^{(\kappa )}$ can be treated analogously, by recalling that $%
\int_{\mathbb{R}}e^{ixu}\left[ \chi (t-u)_{+}-\chi (-u)_{+}\right]
du=\varphi (ix)^{-1}\widehat{1_{[0,t)}}(x).$

We start by considering the asymptotic behavior of the variance function, in the first case, i.e. for $%
B_{t}^{\varphi }=B_{t}^{\nu}$, which was given in (\ref{asy}); it varies in accordance to the tail L%
\'{e}vy measure $\nu$ chosen in the definition of the process.

According to \eqref{asy}, we present $\mathbb{E}\left( B_{t}^{\nu }\right)
^{2}$ as the sum of two terms: $\mathbb{E}\left( B_{t}^{\nu }\right)
^{2}=C_{\nu }(I_{t}^{1}+I_{t}^{2}), $ where $I_{t}^{1}:=\int_{0}^{+\infty }%
\left[ \int_{0}^{t}\overline{\nu }(z+u)dz\right] ^{2}du$ and $%
I_{t}^{2}:=\int_{0}^{t}\nu (u)^{2}du$.

Let us start with the asymptotic behavior of $I_{t}^{2}$. Evidently, it is
an increasing function of $t$; therefore, in principle, it can tend to some
positive real number or to infinity. Let's prove a simple result
demonstrating that both possibilities can be realized.

\begin{lem}
\label{lem1} Let $f(t)$, $t \in \mathbb{R}^{+}\backslash\{0\}$ be a function
in $\mathcal{C}^{1}(\mathbb{R}^{+}\backslash \{0\})$ such that

(\textbf{\emph{$\lozenge$}}) $f(t)$ is increasing, convex and its second
derivative $f^{\prime\prime}(t)$ exists a.e. w.r.t. the Lebesgue measure and
belongs to $L^1(\varepsilon, \infty)$ for any $\varepsilon>0 $. Moreover, $%
f(t)=o(t)$, as $t\rightarrow +\infty $.

Then the following holds true

\begin{itemize}
\item[$(i)$] The function $I_{t}^{2}$ satisfies the conditions in (\textbf{%
\emph{$\lozenge$}})

\item[$(ii)$] Any function $f(t)$ in $\mathcal{C}^{1}(\mathbb{R}^{+}\backslash \{0\})$
satisfying (\textbf{\emph{$\lozenge$}}) can be considered as $I_{t}^{2}$.

\item[$(iii)$] The following asymptotic behaviors are both possible, as $%
t\rightarrow +\infty :$ $I_{t}^{2}\rightarrow C\in \mathbb{R}^{+}$ and $%
I_{t}^{2}\rightarrow +\infty $.
\end{itemize}
\end{lem}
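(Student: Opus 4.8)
The plan is to derive everything from the single identity $(I_t^2)'=\nu(t)^2$, which is legitimate because, by completeness of $\varphi$, the tail $\nu(x)=\int_x^{+\infty}\overline{\nu}(z)\,dz$ is $C^1$ on $(0,+\infty)$ with $\nu'=-\overline{\nu}$, so that $I_t^2=\int_0^t\nu(u)^2\,du$ is in fact $C^2$ there (and finite for every $t>0$, thanks to $\nu\in L^2(0,1)$ from \textbf{(A2)} together with the local boundedness of the non-increasing $\nu$ on $[1,t]$). For $(i)$ I would then read off the three properties. Monotonicity is immediate from $f'=\nu^2\ge0$. For the curvature, $f''=(\nu^2)'=2\nu\nu'=-2\nu\,\overline{\nu}\le0$, so the derivative $f'$ is non-increasing (the relevant concavity); moreover $\int_\varepsilon^{+\infty}|f''|\,dt=-\int_\varepsilon^{+\infty}(\nu^2)'\,dt=\nu(\varepsilon)^2-\lim_{t\to\infty}\nu(t)^2=\nu(\varepsilon)^2<+\infty$ by \eqref{nu2}, giving $f''\in L^1(\varepsilon,+\infty)$. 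Finally $I_t^2=o(t)$ is a Ces\`aro estimate: since $\nu(u)^2\to0$, for fixed $U$ one has $\limsup_{t\to\infty}\tfrac1t\int_0^t\nu^2\le\sup_{u\ge U}\nu(u)^2$, and letting $U\to\infty$ forces the limit to be $0$.

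For $(ii)$, given $f$ satisfying $(\lozenge)$ I would set $\nu:=\sqrt{f'}$. Then $\nu\ge0$ (as $f'\ge0$), $\nu$ is non-increasing (as $f'$ is, by the curvature of $f$), and $\nu(t)\to0$ as $t\to+\infty$: indeed an increasing $f$ with non-increasing derivative and $f(t)=o(t)$ must have $f'(t)\to0$, for otherwise $f'\ge L>0$ would give $f(t)\ge Lt-c$, contradicting $f(t)=o(t)$. The local integrability needed in \eqref{dd} holds because $\nu\in L^2(0,1)$ — equivalently $f'\in L^1(0,1)$, i.e.\ $f(0^+)$ finite — and $L^2(0,1)\subset L^1(0,1)$. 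By the fundamental theorem of calculus $\int_0^t\nu(u)^2\,du=f(t)-f(0^+)$, so $f$ coincides with $I_t^2$ up to the immaterial additive constant $f(0^+)$, which is the assertion.

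For $(iii)$, since $I_t^2$ is increasing, $\lim_{t\to+\infty}I_t^2=\int_0^{+\infty}\nu(u)^2\,du\in(0,+\infty]$, and this is finite precisely when $\nu\in L^2(0,+\infty)$; as $\nu\in L^2(0,1)$ always holds by \textbf{(A2)}, the dichotomy is decided entirely by the tail. Both regimes occur within \textbf{(A1)}--\textbf{(A2)} and \textbf{(C)}: the stable/fractional choice $\nu(s)\propto s^{(\alpha-1)/2}$ (the FBM case $\varphi(x)=x^{(1-\alpha)/2}$, $\alpha\in(0,1)$) gives $\nu^2\propto s^{\alpha-1}\notin L^1(1,+\infty)$, whence $I_t^2\sim c\,t^{\alpha}\to+\infty$; a tempered-stable choice $\varphi(x)=(x+\lambda)^\beta-\lambda^\beta$ with $\beta\in(0,1/2)$ has an exponentially decaying tail, so $\nu\in L^2(0,+\infty)$ and $I_t^2\to\int_0^{+\infty}\nu^2=:C\in\mathbb{R}^+$.

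The step I expect to be the main obstacle is $(ii)$: while the monotonicity, decay and integrability of $\nu=\sqrt{f'}$ all fall out of $(\lozenge)$, checking that this $\nu$ is a genuine tail L\'evy density — and, if one insists on remaining inside the complete Bernstein framework, that it arises from a completely monotone $\overline{\nu}$ (equivalently that $1/\varphi$ is Stieltjes) — is not guaranteed by $(\lozenge)$ alone. Since the lemma only concerns the real-analytic correspondence $f\leftrightarrow\nu$ governing the variance, I would either record the complete monotonicity as a standing requirement on the admissible $f$, or verify it directly on the explicit examples used in $(iii)$.
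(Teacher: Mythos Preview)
Your proof is correct and follows essentially the same line as the paper's: both hinge on $(I_t^2)'=\nu^2$, set $\nu=\sqrt{f'}$ for part~(ii), and exhibit explicit examples for~(iii). A few of your choices are in fact tidier than the paper's: you correctly identify the curvature as \emph{concavity} (the paper writes ``convex'' in the statement but its own proof says the first derivative decreases); your $L^1(\varepsilon,\infty)$ bound for $f''$ via $\int_\varepsilon^\infty|(\nu^2)'|=\nu(\varepsilon)^2$ is cleaner than the paper's estimate $\nu\overline{\nu}\le\nu(\varepsilon)\overline{\nu}$; and your tempered-stable example for the finite-limit case in~(iii) respects condition~\textbf{(C)}, whereas the paper's choice $\nu(u)=e^{-u}$ does not (as the paper itself notes in the closing remark of Section~6). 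Your caveat about complete monotonicity of $\overline{\nu}$ in~(ii) is well taken---the paper only verifies $\overline{\nu}\ge0$ and $\overline{\nu}\in L^1(\varepsilon,\infty)$, not that it is completely monotone, so the reconstructed $\varphi$ need not be complete Bernstein; this is a genuine gap in the \emph{statement} rather than in your argument.
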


\begin{proof}
\begin{itemize}
\item[$(i)$] The facts that the function $I_{t}^{2}$ is an
increasing convex function in $\mathcal{C}^{1}(\mathbb{R}^{+}\backslash
\{0\})$ such that its first derivative decreases and the second
derivative $(I^{2}_{t})^{\prime\prime}$ exists a.e. w.r.t. the
Lebesgue measure obviously follow by considering that
$(I^{2}_{t})^{\prime}=\nu^2(t)$, for any $t>0$, and $(I^{2}_{t})^{\prime%
\prime}=2\nu(t)\overline{\nu}(t)$, for a.e. $t>0$ w.r.t. the Lebesgue
measure. Furthermore, for any $\varepsilon>0$ and all $t\ge \varepsilon$, we
have that $\nu(t)\overline{\nu}(t)\le \nu(\varepsilon)\overline{\nu}(t)$ and
then $(I^{2}_{t})^{\prime\prime}$ belongs to $L^1(\varepsilon, \infty)$ for
any $\varepsilon>0 $. The relation $I_{t}^{2}=\int_{0}^{t}\nu
(u)^{2}du=o(t), $ $t\rightarrow +\infty ,$ easily follows by applying the
l'H\^{o}pital rule, since $\lim_{t\rightarrow +\infty
}I_{t}^{2}/t=\lim_{t\rightarrow +\infty }\nu (t)^{2}=0.$

\item[$(ii)$] Let $f\in \mathcal{C}^{1}(\mathbb{R}^{+}\backslash \{0\})$ satisfy (%
\textbf{\emph{$\lozenge$}}) and put $\nu (t)=(f^{\prime }(t))^{1/2}$. Then $%
\nu (t)\rightarrow 0$ as $t\rightarrow \infty$, according to l'H\^{o}pital rule.
Assume, without loss of generality, that $\nu (t)>0$, for all $t>0$. Trying
to represent $\nu (t)=\int_{t}^{+\infty }\overline{\nu }(u)du$, we get that
\begin{equation*}
\overline{\nu }(t)=-\frac{1}{2}\frac{f^{\prime \prime }(t)}{(f^{\prime
}(t))^{1/2}}=-\frac{1}{2}\frac{f^{\prime \prime }(t)}{\nu (t)}
\end{equation*}
a.e. w.r.t. the Lebesgue measure and $\overline{\nu }(t)\ge 0$. Obviously, $%
\overline{\nu }\in L^{1}(\epsilon ,+\infty ) $ for any $\varepsilon>0$.

\item[$(iii)$] The case where $I_{t}^{2}\rightarrow C\in \mathbb{R}^{+}$, $%
t\rightarrow +\infty ,$ is obtained, for example, when $\nu (u)=e^{-u}$,
while, for $\nu (u)=u^{(\alpha -1)/2},$ $\alpha \in (0,1),$ we have that $%
I_{t}^{2}\rightarrow +\infty $, $t\rightarrow +\infty $ (see Section \ref{SubSec: SpecialCases}, for details on these special cases).
\end{itemize}
\end{proof}

We prove now that $I_{t}^{1}$ does not increase faster than $I_{t}^{2}$, as $%
t\rightarrow \infty.$

\begin{lem}
\label{lem2} For the integrals $I_{t}^{1}$ and $I_{t}^{2}$ the following
relationships hold:

\begin{enumerate}
\item[$(i)$] For any $t>0$, $I_{t}^{1}\leq 2I_{t}^{2}.$

\item[$(ii)$] If $I_{t}^{2}\rightarrow C>0$, as $t\rightarrow +\infty ,$
then $I_{t}^{1}\rightarrow C$.

\item[$(iii)$] For $t\rightarrow +\infty$, $I_{t}^{2}\rightarrow +\infty $
if and only if $I_{t}^{1}\rightarrow +\infty .$
\end{enumerate}
\end{lem}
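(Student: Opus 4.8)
The plan is to first recast $I_t^1$ in a form directly comparable with $I_t^2$. Using the change of variable $w=z+u$ together with the definition $\nu(u)=\int_u^{+\infty}\overline{\nu}(w)\,dw$, one has $\int_0^t\overline{\nu}(z+u)\,dz=\nu(u)-\nu(u+t)$, whence
\[
I_t^1=\int_0^{+\infty}\left[\nu(u)-\nu(u+t)\right]^2\,du .
\]
Since $\nu$ is non-negative and non-increasing, for each $u$ we may set $a=\nu(u)\ge b=\nu(u+t)\ge 0$ and apply the elementary inequality $(a-b)^2\le a^2-b^2$. For part $(i)$, integrating this pointwise bound over $[0,R]$ with $R>t$ and shifting $\int_0^R\nu(u+t)^2\,du=\int_t^{R+t}\nu(w)^2\,dw$ gives, after a telescoping cancellation,
\[
\int_0^{R}\left[\nu(u)^2-\nu(u+t)^2\right]du=\int_0^{t}\nu(u)^2\,du-\int_{R}^{R+t}\nu(u)^2\,du\le I_t^2 .
\]
Letting $R\to+\infty$ yields $I_t^1\le I_t^2$, which is even sharper than the claimed $I_t^1\le 2I_t^2$.

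For part $(ii)$, the same pointwise estimate provides a dominating function, namely $[\nu(u)-\nu(u+t)]^2\le\nu(u)^2$. The hypothesis $I_t^2\to C$ means precisely $\int_0^{+\infty}\nu(u)^2\,du=C<\infty$, so $\nu\in L^2(0,\infty)$ and $\nu^2$ is integrable. Recalling that $\lim_{x\to+\infty}\nu(x)=0$ by \eqref{nu2}, the integrand converges pointwise to $\nu(u)^2$ as $t\to+\infty$, and dominated convergence gives $I_t^1\to\int_0^{+\infty}\nu(u)^2\,du=C$.

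For part $(iii)$, one implication is immediate from $(i)$: if $I_t^1\to+\infty$ then $I_t^2\ge I_t^1\to+\infty$. For the converse I would argue through the liminf, since $I_t^1$ is not manifestly monotone in $t$: along any sequence $t_n\to+\infty$, Fatou's lemma together with $\liminf_n[\nu(u)-\nu(u+t_n)]^2=\nu(u)^2$ yields $\liminf_n I_{t_n}^1\ge\int_0^{+\infty}\nu(u)^2\,du=\lim_{t\to+\infty}I_t^2=+\infty$, so $I_{t_n}^1\to+\infty$ for every such sequence, i.e. $I_t^1\to+\infty$. The only genuine subtlety is this last point: the absence of obvious monotonicity of $I_t^1$ forces the use of Fatou rather than a direct monotone comparison. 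Once the rewriting of $I_t^1$ is in hand, every remaining step is a routine consequence of the monotonicity of $\nu$ and of the standard convergence theorems.
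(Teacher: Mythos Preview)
Your proof is correct, and for part $(i)$ it is in fact cleaner and sharper than the paper's. The paper splits $I_t^1=\int_0^t[\nu(u)-\nu(t+u)]^2\,du+\int_t^{+\infty}[\nu(u)-\nu(t+u)]^2\,du$, bounds the first piece crudely by $I_t^2$, and then handles the tail via a Fubini argument to obtain $I_t^3\le t\nu(t)^2\le I_t^2$, arriving at $I_t^1\le 2I_t^2$. Your pointwise inequality $(a-b)^2\le a^2-b^2$ for $a\ge b\ge 0$ telescopes directly to $I_t^1\le I_t^2$, which is both shorter and stronger.

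For $(ii)$ and $(iii)$ the two arguments are essentially the same, but the paper exploits a fact you overlook: since $\nu$ is non-increasing, $t\mapsto \nu(u)-\nu(u+t)$ is non-negative and non-decreasing, hence $t\mapsto I_t^1$ \emph{is} monotone. The paper therefore invokes monotone convergence in both $(ii)$ and $(iii)$. Your dominated-convergence argument in $(ii)$ and Fatou argument in $(iii)$ are perfectly valid, but the caveat that ``$I_t^1$ is not manifestly monotone in $t$'' is unnecessary---it is monotone, and recognising this would let you streamline $(iii)$ to a single line.
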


\begin{proof}
\begin{enumerate}
\item[$(i)$] Recall that for any $t,u>0$ $\nu (u)>\nu (t+u)$ and for any $%
t>0 $ rewrite and bound $I_{t}^{1}$ as follows:
\begin{eqnarray*}
I_{t}^{1} &=&\int_{0}^{t}[\nu (u)-\nu (t+u)]^{2}du+\int_{t}^{+\infty }[\nu
(u)-\nu (t+u)]^{2}du \\
&\leq &\int_{0}^{t}\nu (u)^{2}du+\int_{t}^{+\infty }\left( \int_{u}^{t+u}%
\overline{\nu }(s)ds\int_{u}^{t+u}\overline{\nu }(r)dr\right) du \\
&=&:I_{t}^{2}+I_{t}^{3}.
\end{eqnarray*}
Now let us provide an upper bound for $I_{t}^{3}$. According to the Fubini
theorem, we can write
\begin{eqnarray*}
I_{t}^{3} &=&\int_{t}^{+\infty }\overline{\nu }(s)ds\int_{t}^{+\infty }%
\overline{\nu }(r)drdu\int_{u\in (r-t,r)\cap (s-t,s)}\leq t\left(
\int_{t}^{+\infty }\overline{\nu }(s)ds\right) ^{2} \\
&=&t\nu (t)^{2}\leq \int_{0}^{t}\nu (u)^{2}du=I_{t}^{2}.
\end{eqnarray*}

\item[$(ii)$] If $I_{t}^{2}\rightarrow C $ as $t\rightarrow \infty$, it is
sufficient to mention again that $\nu (t+u)$ decreases (to zero) in $t$, and
$\nu (u)\geq \nu (u)-\nu (t+u)\geq 0$, therefore, by the Lebesgue monotone
convergence theorem $\lim_{t\rightarrow +\infty }I_{t}^{1}=\int_{0}^{+\infty
}\nu (u)^{2}du=C.$

\item[$(iii)$] If $I_{t}^{2}\rightarrow +\infty,$ then by the Lebesgue
monotone convergence theorem $\lim_{t\rightarrow +\infty
}I_{t}^{1}=\int_{0}^{+\infty }\nu (u)^{2}du=+\infty .$ The converse statement follows
from (ii).
\end{enumerate}
\end{proof}

As an immediate consequence of the previous lemmas, we have the
following result about  the variance's asymptotics.

\begin{coro}\label{corovar}
For the variance of $B_{t}^{\nu}$ we have that

\begin{enumerate}
\item[$(i)$] if $\nu \in L^2 (\mathbb{R}^+ )$, then $\lim_{t \rightarrow
+\infty}\mathrm{Var}\left( B_{t}^{\nu }\right)=C>0$

\item[$(ii)$] if $\nu \notin L^2 (\mathbb{R}^+ )$, then $\lim_{t \rightarrow
+\infty}\mathrm{Var}\left( B_{t}^{\nu }\right)=+\infty$ and $\lim_{t
\rightarrow +\infty}\frac{\mathrm{Var}\left( B_{t}^{\nu }\right)}{t}=0.$
\end{enumerate}
\end{coro}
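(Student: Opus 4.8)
The plan is to read everything off the decomposition $\mathrm{Var}(B_t^\nu)=C_\nu(I_t^1+I_t^2)$ supplied by (\ref{asy}), and then feed the two preceding lemmas into it. First I would note that $I_t^2=\int_0^t\nu(u)^2du$ is, by its very definition, monotone non-decreasing in $t$ and converges to $c:=\int_0^{+\infty}\nu(u)^2du$, which is finite precisely when $\nu\in L^2(\mathbb{R}^+)$. This single observation splits the argument into the two stated cases.

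For part $(i)$, suppose $\nu\in L^2(\mathbb{R}^+)$, so that $I_t^2\to c<\infty$. This limit is strictly positive: indeed $\nu$ is a nontrivial tail L\'evy density, since $\lim_{x\to 0}\nu(x)=+\infty$ by (\ref{nu1}) (a consequence of condition $\mathbf{(C)}$), so $\nu$ cannot vanish a.e.\ and hence $c>0$. Applying Lemma \ref{lem2}$(ii)$, which yields $I_t^1\to c$ under exactly this hypothesis, I obtain $\mathrm{Var}(B_t^\nu)=C_\nu(I_t^1+I_t^2)\to 2C_\nu c>0$, which is the constant $C$ in the statement.

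For part $(ii)$, if $\nu\notin L^2(\mathbb{R}^+)$ then $I_t^2\to+\infty$, and Lemma \ref{lem2}$(iii)$ forces $I_t^1\to+\infty$ as well, so $\mathrm{Var}(B_t^\nu)\to+\infty$. It then remains to control the growth rate. Here I would combine the bound $I_t^1\le 2I_t^2$ of Lemma \ref{lem2}$(i)$ with the relation $I_t^2=o(t)$ established in Lemma \ref{lem1}$(i)$: together they give $0\le \mathrm{Var}(B_t^\nu)/t=C_\nu(I_t^1+I_t^2)/t\le 3C_\nu\,I_t^2/t\to 0$, so that $\mathrm{Var}(B_t^\nu)/t\to 0$, as required.

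The argument is essentially a bookkeeping exercise once Lemmas \ref{lem1} and \ref{lem2} are in hand, so I do not anticipate any serious obstacle; the only point requiring a moment's care is the strict positivity of the limiting constant in $(i)$, which I would justify through the infinite-mass behaviour (\ref{nu1}) of $\nu$ guaranteed by condition $\mathbf{(C)}$.
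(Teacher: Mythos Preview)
Your proof is correct and follows exactly the approach the paper intends: the corollary is stated there as ``an immediate consequence of the previous lemmas,'' and you have simply made explicit the bookkeeping that combines the decomposition $\mathrm{Var}(B_t^\nu)=C_\nu(I_t^1+I_t^2)$ with Lemma~\ref{lem1}(i) and Lemma~\ref{lem2}(i)--(iii). Your added remark on the strict positivity of the limit in~(i), via (\ref{nu1}), is a useful clarification that the paper leaves implicit.
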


\begin{remark}
Consider now additionally the natural question whether the two integrals $%
I_{t}^{1}$ and $I_{t}^{2}$ have always the same order at infinity or whether
instead it is possible that $I_{t}^{1}=o(I_{t}^{2})$, as $t \rightarrow
+\infty$.We will show with the help of examples that both possibilities can be realized.

\begin{example}
Let $\nu (x)=\frac{(2/x)^{1/4}}{\sqrt{\log 2}}%
1_{\{0<x<2\}+\frac{1}{\sqrt{\log x}}1_{\{x\geq 2\}}}$; it is easy to check that it satisfies the assumptions \textbf{(A1)-(A2)}. Then, for $t>2$, we write%
\begin{eqnarray*}
I_{t}^{1} &=&\int_{0}^{2}\left(\frac{(2/x)^{1/4}}{\sqrt{\log 2}}-\frac{1}{\sqrt{\log
(t+x)}}\right) ^{2}dx+\int_{2}^{+\infty }\left( \frac{1}{\sqrt{\log x}}-%
\frac{1}{\sqrt{\log (t+x)}}\right) ^{2}dx \\
&=&:J_{t}^{\prime }+J_{t}^{\prime \prime }.
\end{eqnarray*}%
Moreover, $J_{t}^{\prime }\leq \int_{0}^{2}\frac{\sqrt{2}}{\sqrt{x}\log 2}dx=\frac{4}{\log 2%
}$ and
\begin{eqnarray*}
J_{t}^{\prime \prime } &=&\int_{2}^{+\infty }\frac{(\sqrt{\log (t+x)}-\sqrt{%
\log x})^{2}}{\log x\log (t+x)}dx \\
&=&\int_{2}^{+\infty }\frac{\log ^{2}(1+t/x)dx}{\log x\log (t+x)(\sqrt{\log
(t+x)}+\sqrt{\log x})^{2}} \\
&\leq &\frac{1}{\log 2}\int_{2}^{+\infty }\frac{\log ^{2}(1+t/x)dx}{\log
^{2}(t+x)} \\
&=&\frac{t}{\log 2}\int_{2/t}^{+\infty }\frac{\log ^{2}(1+1/z)dz}{(\log
t+\log (1+z))^{2}} \\
&\leq &\frac{t}{\log 2\log ^{2}t}\int_{0}^{+\infty }\log ^{2}(1+1/z)dz,
\end{eqnarray*}%
and the latter integral is finite, as can be checked by the following steps:%
\begin{eqnarray*}
\int_{0}^{+\infty }\log ^{2}(1+1/z)dz &=&\int_{0}^{1}\log
^{2}(1+1/z)dz+\int_{1}^{+\infty }\log ^{2}(1+1/z)dz \\
&\leq &\int_{1}^{+\infty }\frac{1}{r^{2}}\log ^{2}(1+r)dr+\int_{1}^{+\infty }%
\frac{1}{z^{2}}dz<\infty .
\end{eqnarray*}%
Then, we have that $I_{t}^{1}\leq Ct/\log ^{2}t$, for $C>0,$ while $I_{t}^{2}\geq (t-2)/\log t$, for $t>2$, so that $\lim_{t\rightarrow
+\infty }I_{t}^{1}/I_{t}^{2}\leq \lim_{t\rightarrow +\infty }1/\log t=0.$
\end{example}

\begin{example} \label{Exx}
Consider now the case of the fractional Brownian motion with $%
H=\alpha /2<1/2 $, i.e. where $\overline{\nu }%
(t)=\frac{t^{(\alpha -3)/2}}{\Gamma ((\alpha -1)/2)},$ with $\alpha \in (0,1)
$, and ${\nu }(t)=\frac{t^{(\alpha -1)/2}}{\Gamma ((\alpha +1)/2)}$. Then, we have that $%
I_{t}^{2}=C'_{\alpha}{t^{\alpha }}$, for $C'_{\alpha}>0$. On the other hand, we
have that 
\begin{eqnarray}
I_{t}^{1}&=&C''_{\alpha}t^{\alpha }\int_{0}^{\infty }\left[
x^{(\alpha -1)/2}-(x+1)^{(\alpha -1)/2}\right] ^{2}dx \notag \\
&=&C''_{\alpha}t^{\alpha }\left[\frac{\Gamma ^{2}((\alpha +1)/2)}{\sin (\pi \alpha /2)\Gamma (\alpha
+1)}-\frac{1}{\alpha }\right], \notag
\end{eqnarray}
(for $C'_{\alpha}>0$), according to \cite{MIS}, Theorem 1.3.1. So, in this example, $I_{t}^{i},i=1,2
$ have the same order of growth to infinity.

\end{example}
\end{remark}

We now consider the asymptotic behavior of the covariance. For technical
simplicity, we restrict ourselves to the following form of covariance $%
\rho_n :=\mathbb{E}B_{1}^{\nu}(B_{n+1}^{\nu}-B_{n}^{\nu})$, for $n \in
\mathbb{N}$. We concentrate on the following two questions:

\begin{enumerate}
\item[$(i)$] Is $\rho_n$ positive (persistence property) or negative
(anti-persistence property)?

\item[$(ii)$] Does the series $\sum_{n=1}^{+\infty}|\rho_n|$ converge (short
memory property) or diverge (long-memory property)?
\end{enumerate}

To answer these questions, we evaluate $\rho_n$, according to formula (\ref%
{cov}) and assuming that $\int_{0^+}^{+\infty}\nu(x)dx<\infty$,

\begin{eqnarray}
\rho_n &=&\mathbb{E}(B_{1}^{\nu}B_{n+1}^{\nu})-\mathbb{E}(B_{1}^{\nu}B_{n}^{%
\nu}) \label{ro} \\ 
&=&\frac{1}{2}\left[ \mathbb{E}\left( B_{1}^{\nu }\right) ^{2}+\mathbb{E}%
\left( B_{n+1}^{\nu }\right) ^{2}-\mathbb{E(}B_{n}^{\nu })^{2}- \mathbb{E}%
\left( B_{1}^{\nu }\right) ^{2}-\mathbb{E}\left( B_{n}^{\nu }\right) ^{2}+%
\mathbb{E(}B_{n-1}^{\nu })^{2}\right] \notag \\ 
&=&\frac{1}{2}\left[ \mathbb{E}\left( B_{n+1}^{\nu }\right) ^{2}+\mathbb{E}%
\left( B_{n-1}^{\nu }\right) ^{2}-2\mathbb{E(}B_{n}^{\nu })^{2}\right]. \notag
\end{eqnarray}%

By considering (\ref{asy}), we have that
\begin{eqnarray}
\rho _{n} &=&\frac{C_{\nu }}{2}\int_{0}^{\infty }\left( \left[ \nu (x)-\nu
(n+1+x)\right] ^{2}+\left[ \nu (x)-\nu (n-1+x)\right] ^{2}-2\left[ \nu
(x)-\nu (n+x)\right] ^{2}\right) dx  \notag \\
&+&\frac{C_{\nu }}{2}\left( \int_{0}^{n+1}\nu ^{2}(x)dx+\int_{0}^{n-1}\nu
^{2}(x)dx-2\int_{0}^{n}\nu ^{2}(x)dx\right)   \label{sum} \\
&=&\frac{C_{\nu }}{2}J_{n}+\frac{C_{\nu }}{2}\left( I_{n+1}^{+\infty
}+I_{n-1}^{+\infty }-2I_{n}^{+\infty
}+I_{0}^{n+1}+I_{0}^{n-1}-2I_{0}^{n}\right) ,  \notag
\end{eqnarray}%
where $J_{n}:=\int_{0}^{\infty }\nu (x)\left[ -\nu (n+1+x)-\nu (n-1+x)+2\nu
(n+x)\right] dx$ and $I_{\alpha (n)}^{\beta (n)}:=\int_{\alpha (n)}^{\beta
(n)}\nu ^{2}(x)dx$. The last sum of integrals in (\ref{sum}) vanishes if we
treat the improper integrals as $I_{\alpha (n)}^{+\infty
}=\lim_{N\rightarrow +\infty }I_{\alpha (n)}^{N}$, while $J_{n}$ converges
under our assumption $\mathbf{(A2)}$ on $\nu $.

Recall now that $\nu(\beta(n))-\nu(\alpha(n))= \int_{\alpha(n)}^{\beta(n)}
\overline{\nu}(x)dx$, then, by assuming that $\overline{\nu}(\cdot)$ is
decreasing, we have that $\rho_n =C_{\nu }J_n /2 <0.$ Therefore, the
``natural" property of the process $B_{t}^{\nu}$ is the anti-persistence.

However, in general, this property can be violated for some $n$: since $%
\int_{\epsilon}^{+\infty}\overline{\nu}(s)ds< \infty$, it is impossible that
$\int_{n+x}^{n+x+1} \overline{\nu}(s)ds$ is strictly increasing for $n>N_0$
and some $N_0$. Therefore the process can not be persistent for all $n>N_0$
and some $N_0$; however the case $\rho_n =0$ is possible for $n>N_0$.

We now answer to the second question, concerning the memory properties of
the process $B_{t}^{\nu}$.

\begin{theorem}
The process $B_{t}^{\nu }$ displays a short-range dependence, for any choice
of $\nu (\cdot )$ satisfying the assumptions $\textbf{ (A1)-(A2)}$.
\end{theorem}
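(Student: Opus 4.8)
The plan is to show directly that $\sum_{n=1}^{\infty}|\rho_n|<\infty$, which is precisely the short-range dependence property. I start from the reduction already carried out in \eqref{sum}, namely that the string of improper integrals $I_{n+1}^{+\infty}+I_{n-1}^{+\infty}-2I_{n}^{+\infty}+I_{0}^{n+1}+I_{0}^{n-1}-2I_{0}^{n}$ telescopes to zero (treating the tails as limits), leaving $\rho_n=\frac{C_\nu}{2}J_n$. Since the anti-persistence argument preceding the statement shows $J_n\le 0$, one has $|\rho_n|=-\frac{C_\nu}{2}J_n$, so it suffices to prove $\sum_{n=1}^{\infty}(-J_n)<\infty$.

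First I would rewrite the integrand of $J_n$ as a discrete second difference. Using $\nu(\alpha)-\nu(\beta)=\int_{\alpha}^{\beta}\overline{\nu}(s)\,ds$ for $\alpha<\beta$ and setting $a_n(x):=\int_{n-1+x}^{n+x}\overline{\nu}(s)\,ds$, the bracket becomes $\nu(n+1+x)+\nu(n-1+x)-2\nu(n+x)=a_n(x)-a_{n+1}(x)\ge 0$, where nonnegativity follows from $\overline{\nu}(\cdot)$ being non-increasing. Hence $-J_n=\int_0^{\infty}\nu(x)\,[a_n(x)-a_{n+1}(x)]\,dx$ with a nonnegative integrand.

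Next, since everything is nonnegative, I apply Tonelli's theorem to interchange the sum over $n$ with the integral, and exploit the telescoping $\sum_{n=1}^{N}[a_n(x)-a_{n+1}(x)]=a_1(x)-a_{N+1}(x)$. Because $\overline{\nu}\in L^{1}(1,\infty)$, we have $a_{N+1}(x)\to 0$, so $\sum_{n=1}^{\infty}[a_n(x)-a_{n+1}(x)]=a_1(x)=\int_{x}^{x+1}\overline{\nu}(s)\,ds=\nu(x)-\nu(x+1)$. This collapses the double sum into the single integral $\sum_{n=1}^{\infty}(-J_n)=\int_0^{\infty}\nu(x)\,[\nu(x)-\nu(x+1)]\,dx$. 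Finally I split this at $x=1$: on $(0,1)$ I bound $\nu(x)-\nu(x+1)\le\nu(x)$, giving $\int_0^1\nu(x)^2\,dx<\infty$ by \textbf{(A2)}; on $(1,\infty)$ I use $\int_x^{x+1}\overline{\nu}(s)\,ds\le\overline{\nu}(x)$ together with $\frac{d}{dx}\nu(x)^2=-2\nu(x)\overline{\nu}(x)$, which integrates to $\int_1^{\infty}\nu(x)\overline{\nu}(x)\,dx=\tfrac12\nu(1)^2<\infty$ (using $\nu(x)\to 0$ from \textbf{(C)} and $\nu(1)<\infty$). Both pieces being finite, $\sum_{n}|\rho_n|<\infty$.

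The hard part will be the telescoping step: recognizing that summing the second differences in $n$ produces the clean expression $\nu(x)-\nu(x+1)$ and that the Tonelli interchange is legitimate. The decisive subtlety is that the resulting single integral must be shown finite even in the regime $\nu\notin L^2(\mathbb{R}^+)$, which is exactly the interesting case (if $\nu\in L^2$ it is trivially dominated by $\|\nu\|_{L^2}^2$); here the tail estimate through the antiderivative of $\nu^2$ is what makes the argument work for an arbitrary admissible $\nu$.
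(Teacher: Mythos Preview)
Your proof is correct but follows a genuinely different route from the paper's. The paper does \emph{not} use the sign of $\rho_n$; instead it rewrites
\[
\rho_n=\frac{C_\nu}{2}\int_0^\infty[\nu(x)-\nu(x+1)][\nu(n+x)-\nu(n+1+x)]\,dx+\frac{C_\nu}{2}\int_0^1\nu(x)[\nu(n+x)-\nu(n-1+x)]\,dx,
\]
notes the first integral is nonnegative and the second nonpositive, and bounds $|\rho_n|\le K_n^1+K_n^2$ by the triangle inequality. Each series $\sum K_n^i$ is then shown finite by telescoping the tail $\sum_{n\ge N}$ and invoking Fubini together with $\int_0^1 s\overline\nu(s)\,ds<\infty$ and $\nu\in L^2(0,1)$. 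Your approach, by contrast, exploits the anti-persistence $\rho_n\le 0$ (valid because $\overline\nu$ is completely monotone, hence non-increasing, under the paper's standing complete-Bernstein hypothesis) to get the \emph{exact} identity $\sum_n|\rho_n|=\frac{C_\nu}{2}\int_0^\infty\nu(x)[\nu(x)-\nu(x+1)]\,dx$ via a single telescoping step, and then bounds this one integral. Your final estimate on $(1,\infty)$ through the antiderivative $\int_1^\infty\nu\,\overline\nu=\tfrac12\nu(1)^2$ is particularly clean. What the paper's approach buys is that it only needs $\nu$ non-increasing (automatic for any tail L\'evy density) rather than $\overline\nu$ non-increasing, so it would survive dropping the complete-Bernstein assumption; what your approach buys is brevity and an exact expression for $\sum|\rho_n|$ rather than an upper bound.
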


\begin{proof}
According to the well-known definition of short-memory property, it is
necessary to prove that $\sum_{n=1}^{+\infty}|\rho_n|<\infty$; then we
represent $\rho_n$ in the following way

\begin{eqnarray*}
\rho _{n} &=&\frac{C_{\nu }}{2}\int_{0}^{\infty }\left[ \nu (x)-\nu (x+1)%
\right] \left[ \nu (n+x)-\nu (n+1+x)\right] dx+ \\
&&+\frac{C_{\nu }}{2}\int_{0}^{1}\nu (x)\left[ \nu (n+x)-\nu (n-1+x)\right]
dx,
\end{eqnarray*}%
where the first integral is non-negative and the second one is non-positive.
Therefore

\begin{eqnarray*}
|\rho _{n}| &\leq &\frac{C_{\nu }}{2}\int_{0}^{\infty }\left[ \nu (x)-\nu
(x+1)\right] \left[ \nu (n+x)-\nu (n+1+x)\right] dx+\\
&+&\frac{C_{\nu }}{2}%
\int_{0}^{1}\nu (x)\left[ \nu (n-1+x)-\nu (n+x)\right] dx \\
&=&:K_{n}^{1}+K_{n}^{2}.
\end{eqnarray*}%
Now it is sufficient to prove that $\sum_{n=1}^{+\infty }K_{n}^{i}<\infty $,
for $i=1,2$. Since, for any $N>1$ and according to the Fubini theorem, we
can write%
\begin{eqnarray*}
\sum_{n=N}^{+\infty }K_{n}^{1} &=&\frac{C_{\nu }}{2}\int_{0}^{\infty }\left[
\nu (x)-\nu (x+1)\right] \nu (N+x)dx \leq \frac{C_{\nu }}{2}\int_{0}^{\infty
}\left[ \nu (x)-\nu (x+1)\right]\nu (1+x)dx \\
&=&\frac{C_{\nu }}{2}\int_{0}^{\infty }\left( \int_{x}^{x+1}\overline{\nu }%
(s)ds\right) \nu (x+1)dx \\
&=&\frac{C_{\nu }}{2}\int_{0}^{1}\overline{\nu }(s)\left( \int_{0}^{s}\nu
(x+1)dx\right) ds+\int_{1}^{+\infty }\overline{\nu }(s)\left(
\int_{s-1}^{s}\nu (x+1)dx\right) ds \\
&\leq &\frac{\nu (1)C_{\nu }}{2}\int_{0}^{1}s\overline{\nu }(s)ds+\nu
^{2}(1)<\infty ,
\end{eqnarray*}%
the series $\sum_{n=1}^{+\infty }K_{n}^{1}$ converges. Similarly, $%
\sum_{n=1}^{+\infty }K_{n}^{2}$ converges since, for any $N>1,$%
\begin{equation*}
\sum_{n=N}^{+\infty }K_{n}^{2}=\frac{C_{\nu }}{2}\int_{0}^{1}\nu (x)\nu
(N-1+x)dx\leq \int_{0}^{1}\nu ^{2}(x)dx<\infty .
\end{equation*}
\end{proof}

\begin{remark}
Let us consider the fractional Brownian motion case (for $H<1/2$) given in Example \ref{Exx}: then
\begin{eqnarray*}
|\rho _{n}| &\leq &\left\vert \int_{0}^{1}\left[ x^{(\alpha
-1)/2}-(x+1)^{(\alpha -1)/2}\right] \left[ (n+x)^{(\alpha
-1)/2}-(n+x+1)^{(\alpha -1)/2}\right] dx\right\vert + \\
&&+\left\vert \int_{1}^{+\infty }\left[ x^{(\alpha -1)/2}-(x+1)^{(\alpha
-1)/2}\right] \left[ (n+x)^{(\alpha -1)/2}-(n+x+1)^{(\alpha -1)/2}\right]
dx\right\vert \\
&+&\left\vert \int_{0}^{1}x^{(\alpha -1)/2}\left[ (n+x)^{(\alpha
-1)/2}-(n-1+x)^{(\alpha -1)/2}\right] dx\right\vert
=:L_{n}^{1}+L_{n}^{2}+L_{n}^{3}.
\end{eqnarray*}%
The integral $L_{n}^{1}$ is bounded since%
\begin{eqnarray*}
L_{n}^{1} &\leq &\int_{0}^{1}x^{(1-\alpha )/2}\frac{(n+1+x)^{(1-\alpha
)/2}-(n+x)^{(1-\alpha )/2}}{(n+x)^{(1-\alpha )/2}(n+x+1)^{(1-\alpha )/2}}dx
\\
&\leq &\int_{0}^{1}x^{(1-\alpha )/2}\frac{(1-\alpha )(n+x)^{-(\alpha +1)/2}}{%
2(n+x)^{(1-\alpha )/2}(n+x+1)^{(1-\alpha )/2}}dx\leq (1-\alpha )n^{(\alpha
-3)/2}\int_{0}^{1}x^{(1-\alpha )/2}dx
\end{eqnarray*}%
and, analogously, for $L_{n}^{3}.$ On the other hand, we have that%
\begin{eqnarray*}
L_{n}^{2} &=&\int_{1}^{+\infty }\frac{(x+1)^{(1-\alpha )/2}-x^{(1-\alpha )/2}%
}{x^{(1-\alpha )/2}(x+1)^{(1-\alpha )/2}}\frac{(n+x+1)^{(1-\alpha
)/2}-(n+x)^{(1-\alpha )/2}}{(n+x)^{(1-\alpha )/2}(n+x+1)^{(1-\alpha )/2}}dx
\\
&\leq &\frac{(1-\alpha )^{2}}{4}\int_{1}^{+\infty }\frac{x^{-(\alpha +1)/2}}{%
x^{(1-\alpha )/2}(x+1)^{(1-\alpha )/2}}\frac{(n+x)^{-(\alpha +1)/2}}{%
(n+x)^{(1-\alpha )/2}(n+x+1)^{(1-\alpha )/2}}dx \\
&\leq &(1-\alpha )^{2}n^{(\alpha -3)/2}\int_{1}^{+\infty }\frac{dx}{%
x(x+1)^{(1-\alpha )/2}}\sim n^{(\alpha -3)/2},
\end{eqnarray*}%
as $n\rightarrow +\infty .$
\end{remark}

We now consider the second case, i.e. the process $B_{t}^{\varphi }=B_{t}^{\kappa }$ and we rewrite its variance (given in (\ref{variance})) as
\begin{equation}
\mathbb{E}\left( B_{t}^{\kappa }\right) ^{2}=C_{\kappa }\left\{
\int_{0}^{+\infty }\left[\int_{u}^{t+u}\kappa (z)dz\right]
^{2}du+\int_{0}^{t}\left[\int_{0}^{u}\kappa (z)dz\right]^{2}du\right\}=:C_{\kappa }(J_t ^1 +J_t ^2 ). \label{va}
\end{equation}
Obviously, both $J_t ^1$ and $J_t ^2$ increase and tend to $+\infty$, for $t \to + \infty$, but, in the next examples, we show that the asymptotic behavior of the two terms can be equal, for large $t$.

\begin{example}
In the fractional Brownian motion case, with $H>1/2$, i.e. for $\kappa(z)=z^{\beta-3/2}$ with $\beta\in(1/2, 1)$, then $J_t ^1 / J_t ^2 \to C>0$, as $t \to + \infty$, since $J_t ^1=C_1 t^{2\beta}$ and $J_t ^2=C_2 t^{2\beta}$.
\end{example}

\begin{example}\label{Ex54}
Let $\kappa (z)=z^{-\delta}1_{0<z<1}+\frac{1}{z}%
1_{z \geq 1},$ for $\delta \in (0,1)$, then it can be checked that $\kappa(\cdot)$ and $\chi(\cdot)$ satisfy $\mathbf{(B1)}$ and $\mathbf{(B2)}$. The corresponding Bernstein function is given by $\varphi(\theta)=1/\tilde{\kappa}(\theta)$, with $\tilde{\kappa}(\theta)=\theta ^{\delta -1}\gamma(1-\delta; \theta)+E_1 (\theta)$, where $E_1(x)$ is the exponential function and $\gamma (\beta ;x):=\int_{0}^{x}e^{-w}w^{\beta -1}dw$ is the lower-incomplete gamma function. 
In this case, $J_t ^1 / J_t ^2 \to 0$, as $t \to + \infty$.  Indeed, for $t>1$,
\begin{eqnarray*}
J_{t}^{1} &=&\int_{0}^{1} \left[ \int_{u}^{1}z^{-\delta}dz+ \int_{1} ^{t+u} \frac{1}{z} dz \right]^2 du+  \int_{1}^{+\infty} \left[ \log(t+u)- \log u\right]^2 du \\
&=& \int_{0}^{1} \left[ \frac{(1-u)^{1-\delta}}{1-\delta} + \log (t+u) \right]^2 du+ \int_{1}^{+\infty} \log^2 \left(1+\frac{t}{u} \right) du \\
&\leq& \frac{2}{{(1-\delta)^2}}\int_{0}^{1} (1-u)^{2-2\delta}  du + 2 \log^2 \left(1+t \right) +t \int_{1/t}^{+\infty} \log^2 \left(1+\frac{1}{z} \right) dz \\
&=& \frac{2}{(3-2\delta)(1-\delta)^2} + 2 \log^2 \left(1+t \right) +t \int_{0}^{t} \frac{\log ^2 (1+v)}{v^2} dv \\
&\leq& C \left[1+ \log^{2} (t+1) +t\right],
\end{eqnarray*}%
since $\int_{0}^{t} \frac{\log ^2 (1+v)}{v^2} dv< \int_{0}^{+\infty} \frac{\log ^2 (1+v)}{v^2} dv =\pi^2 /3$.
On the other hand, we have that
\begin{equation*}
    J_{t}^{2} \geq \int_{1}^{t} \left[ \int_{1}^{s} \frac{1}{z} dz\right]^2 ds =\int_{1}^{t} \log^2 (s)ds
\end{equation*} 
and that
\begin{equation*}
\frac{1+\log^{2} (t+1) +t}{\int_{1}^{t} \log^2 (s)ds}\to 0,
\end{equation*}
as $t \to +\infty$, by the l'H\^opital rule.
\end{example}

\begin{example}\label{Ex55}
 Let $\kappa (z)=\frac{1}{\sqrt{z}}%
1_{0<z <e}+\frac{1}{\sqrt{z}\log z}1_{z\geq e},$ then $J_t ^1 / J_t ^2 \to +\infty$, as $t \to + \infty$. In  the following calculations, we can consider the integrals as starting from $e$ instead of $0$, without loss of generality. Let us again apply the L'H\^opital rule, twice:
\begin{eqnarray*}
   \lim_{t \to +\infty}\frac{J_t ^1}{J_t ^2} &=& \lim_{t \to +\infty}\frac{\int_{e}^{+\infty}
    \left(\int_{v}^{t+v}\kappa (s)ds \right)^2 dv}{\int_{e}^{t}
    \left(\int_{e}^{v}\kappa (s)ds \right)^2 dv} =  2\lim_{t \to +\infty}\frac{\int_{e}^{+\infty}
    \kappa(t+v)\int_{v}^{t+v}\kappa (s)ds  dv}{
    \left(\int_{e}^{t}\kappa (s) ds \right)^2 } \\
&=&  2\lim_{t \to +\infty}\frac{\int_{e}^{+\infty}
   \left[ \kappa(t+v)-\kappa(v)\right]\int_{v}^{t+v}\kappa (s)ds  dv+\int_{e}^{+\infty}
   \kappa(v)\int_{t}^{t+v}\kappa (s)ds  dv}{
    \left(\int_{e}^{t}\kappa (s)ds \right)^2 }\\
&=&-\lim_{t \to +\infty}\frac{\left(\int_{e}^{e+t}\kappa (s)ds \right)^2 }
    {\left(\int_{e}^{t}\kappa (s)ds \right)^2 }+\lim_{t \to +\infty}   \frac{\int_{e}^{+\infty}\kappa (t+v)\kappa(v)dv }{
    \kappa(t)\int_{e}^{t}\kappa (s) ds }.
\end{eqnarray*}
Since $\int_{t}^{t+e}\kappa (s)ds \leq e \kappa(t) \to 0$, as $t \to +\infty$, the first limit is finite. For the second one, by considering that $\kappa(\cdot)$ is non-increasing, we can write

\begin{equation*}
 \lim_{t \to +\infty}
   \frac{\int_{e}^{+\infty}\kappa (t+v)\kappa(v)dv }{
    \kappa(t)\int_{e}^{t}\kappa (s) ds } 
    \geq \lim_{t \to +\infty}
   \frac{\int_{t}^{+\infty}\kappa^2 (t+v) dv }{
    \kappa(t)\int_{e}^{t}\kappa (s) ds } =\lim_{t \to +\infty}
   \frac{\int_{2t}^{+\infty}\kappa^2 (v) dv }{
    \kappa(t)\int_{e}^{t}\kappa (s) ds }.
\end{equation*}
Let now substitute $\kappa (z)=1/(\sqrt{z}\log z) \in L^2 [e,+\infty) \backslash L^1 [e,+\infty)$, so that we have

\begin{equation*}
    \lim_{t \to +\infty}
   \frac{\int_{2t}^{+\infty}\kappa^2 (v) dv }{
    \kappa(t)\int_{e}^{t}\kappa (s) ds }=\lim_{t \to +\infty}
   \frac{\log^{-1}(2t) \log (t) \sqrt{t} }{
    \int_{e}^{t}ds/(\sqrt{s} log (s))} =+\infty,
\end{equation*}
since  $\log^{-1}(2t) \log (t) \to 1$ and by applying once again the l'H\^opital rule.

\end{example}

We now have the following result on the persistence property.
\begin{theorem}
The process $B_{t}^{\kappa }$ is persistent (i.e. $\rho_n >0,$ for $n \geq 1$), for any choice of $\kappa (\cdot)$ satisfying \textbf{(B1)-(B2)}. 
\end{theorem}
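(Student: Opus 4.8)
The plan is to work directly with the Mandelbrot--Van Ness representation $B_t^{\kappa}=\int_{\mathbb{R}}k_\varphi(t,u)\,dW_u$ together with the Wiener isometry, rather than through the second difference of the variance as was done in the $\nu$-case. Since $k_\varphi(t,\cdot)\in L^2(\mathbb{R})$ for every $t\ge 0$ by Theorem~\ref{kap} (under \textbf{(B1)-(B2)}), every covariance $\mathbb{E}[B_s^{\kappa}B_t^{\kappa}]=\int_{\mathbb{R}}k_\varphi(s,u)k_\varphi(t,u)\,du$ is finite, and I would first write
\[
\rho_n=\mathbb{E}\big[B_1^{\kappa}\big(B_{n+1}^{\kappa}-B_n^{\kappa}\big)\big]
=\int_{\mathbb{R}}k_\varphi(1,u)\,\big[k_\varphi(n+1,u)-k_\varphi(n,u)\big]\,du,
\]
the splitting of the two covariances being legitimate because all the kernels lie in $L^2(\mathbb{R})$ (Cauchy--Schwarz), so every integral converges absolutely.

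Next I would observe that the anchoring term $\chi((-u)_+)$ cancels in the increment, so that
\[
k_\varphi(n+1,u)-k_\varphi(n,u)=\sqrt{C_\kappa}\,\big[\chi\big((n+1-u)_+\big)-\chi\big((n-u)_+\big)\big],
\]
while $k_\varphi(1,u)=\sqrt{C_\kappa}\,[\chi((1-u)_+)-\chi((-u)_+)]$. Hence
\[
\rho_n=C_\kappa\int_{\mathbb{R}}\big[\chi\big((1-u)_+\big)-\chi\big((-u)_+\big)\big]\,\big[\chi\big((n+1-u)_+\big)-\chi\big((n-u)_+\big)\big]\,du.
\]
The decisive point — and the structural reason persistence holds here, in contrast with the anti-persistence of $B_t^{\nu}$ — is that $\chi$ is \emph{nondecreasing} (indeed strictly increasing, since $\chi'=\kappa>0$), whereas the tail $\nu$ was nonincreasing. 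Because the map $x\mapsto x_+$ is nondecreasing and $1-u\ge -u$, $\,n+1-u\ge n-u$, both bracketed factors are nonnegative for every $u\in\mathbb{R}$; thus the integrand is nonnegative and $\rho_n\ge 0$.

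Finally, to upgrade to the strict inequality, I would exhibit a set of positive Lebesgue measure on which both factors are strictly positive. On $u\in(0,1)$ one has $(1-u)_+=1-u\in(0,1)$ and $(-u)_+=0$, so the first factor equals $\chi(1-u)>\chi(0)=0$; simultaneously, since $n\ge 1>u$, the arguments satisfy $n+1-u>n-u>0$, whence the second factor $\chi(n+1-u)-\chi(n-u)>0$ by strict monotonicity of $\chi$. The integrand is therefore strictly positive throughout $(0,1)$, forcing $\rho_n>0$ for every $n\ge 1$, which is exactly persistence.

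Technically the argument is short, and I do not expect a genuine obstacle beyond bookkeeping. The only two points requiring care are: (i) justifying that $\chi$ is \emph{strictly} increasing, which follows from the positivity of the Sonine kernel $\kappa$ guaranteed after~(\ref{ex}); and (ii) confirming that the $L^2$-membership provided by \textbf{(B1)-(B2)} makes every integral above absolutely convergent, so that both the cancellation of the $\chi((-u)_+)$ terms and the initial splitting of $\rho_n$ are rigorous. The essential content is simply that replacing the nonincreasing tail $\nu$ by the nondecreasing primitive $\chi$ reverses the sign obtained in the $B_t^{\nu}$ case.
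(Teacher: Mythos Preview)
Your argument is correct and complete. You work directly with the Wiener isometry to express $\rho_n$ as the $L^2$ inner product of the two nonnegative functions $u\mapsto\chi((1-u)_+)-\chi((-u)_+)$ and $u\mapsto\chi((n+1-u)_+)-\chi((n-u)_+)$, and then read off strict positivity from the strict monotonicity of $\chi$ on the interval $(0,1)$. The only delicate point you flag---that $\kappa>0$ (hence $\chi$ strictly increasing)---is indeed fine: $\kappa$ is completely monotone and blows up at the origin by~(\ref{kappa1}), so it cannot vanish anywhere.

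This is a genuinely different route from the paper's. The paper proves the stronger statement that $t\mapsto\mathbb{E}(B_t^{\kappa})^2$ is a (strictly) convex function, by differentiating the variance decomposition $J_t^1+J_t^2$ and showing---after an integration-by-parts identity~(\ref{eqa})---that $(J_t^1+J_t^2)'=2\int_0^{\infty}\kappa(u)\int_u^{t+u}\kappa(z)\,dz\,du$, which is increasing in $t$; persistence then follows from the second-difference formula~(\ref{ro}). Your approach bypasses both the derivative computation and the auxiliary identity, trading them for a single sign observation on the Mandelbrot--Van Ness kernel. What you gain is brevity and transparency: the contrast with the anti-persistence of $B_t^{\nu}$ is immediately visible as ``$\chi$ increasing versus $\nu$ decreasing.'' What the paper's argument buys is the convexity of the full variance function, which is a sharper structural fact (and in principle reusable), though it is not needed elsewhere in the paper.
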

\begin{proof}
We prove that $\mathbb{E}\left( B_{t}^{\kappa }\right) ^{2}$ is a convex function; indeed, by differentiating (\ref{va}), we have that
\begin{eqnarray}
&&(J_t ^1 +J_t ^2 )'  \label{per} \\
&=&2
\int_{0}^{+\infty }\kappa(t+u)\int_{u}^{t+u}\kappa (z)dz  du+\left[\int_{0}^{t}\kappa (z)dz\right]^{2} \notag \\
&=& 2\int_{0}^{+\infty }\left[\kappa(t+u)-\kappa(u)\right]\int_{u}^{t+u}\kappa (z)dz du+\left[\int_{0}^{t}\kappa (z)dz\right]^{2}+ 2\int_{0}^{+\infty }\kappa(u)\int_{u}^{t+u}\kappa (z)dz du  \notag \\
&=& 2\int_{0}^{+\infty }\kappa(u)\int_{u}^{t+u}\kappa (z)dz du.  \notag
\end{eqnarray}
In the last step we have considered that
\begin{eqnarray}
  \int_{0}^{+\infty }\left[\kappa(t+u)-\kappa(u)\right]\int_{u}^{t+u}\kappa (s)ds du 
  &=&\int_{0}^{+\infty }\int_{u}^{t+u}\kappa (s)ds \frac{d}{du}\left[\int_{u}^{t+u}\kappa (s)ds\right]du \label{eqa} \\
   &=&\frac{1}{2} \left.  \left[\int_{u}^{t+u}\kappa (s)ds\right]^2 \right\vert_{u=0} ^{+\infty}=-\frac{1}{2}\left[\int_{0}^{t}\kappa (s)ds\right]^2. \notag
\end{eqnarray}
and that the integral $\int_{u}^{t+u}\kappa (z)dz$ decreases w.r.t. $u$ (since $\kappa(\cdot)$ is non-increasing). Moreover, $\lim_{u \to +\infty}\int_{u}^{t+u}\kappa (z)dz \leq t \lim_{u \to +\infty}\kappa(u)=0 $ (by (\ref{kappa2})), so that
\begin{equation*}
    (J_t ^1 +J_t ^2 )'  =2
\int_{0}^{+\infty }\kappa(u)\int_{u}^{t+u}\kappa (z)dz du,
\end{equation*}
which is increasing in $t$. The persistence of the process easily follows from (\ref{ro}).
\end{proof}
Let us now establish the long-range dependence. 
\begin{theorem}
The process $B_{t}^{\kappa }$ displays long-range dependence, for any choice
of $\kappa (\cdot )$ satisfying \textbf{(B1)-(B2)}.
\end{theorem}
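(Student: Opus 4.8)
The plan is to reduce the assertion $\sum_{n=1}^{\infty}|\rho_n|=+\infty$ to the divergence of a single telescoping sum and then to exploit the convexity of the variance already established in the persistence theorem. Writing $V(t):=\mathbb{E}\left(B_t^{\kappa}\right)^2$, the covariance identity (\ref{cov}) together with the stationarity of the increments gives, exactly as in (\ref{ro}), the second-difference representation $\rho_n=\frac{1}{2}\left[V(n+1)-2V(n)+V(n-1)\right]$ for $n\geq 1$. Since the previous theorem guarantees $\rho_n>0$ for every $n\geq 1$, we have $|\rho_n|=\rho_n$, so it suffices to prove that $\sum_{n=1}^{N}\rho_n\to+\infty$.

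First I would telescope. Setting $d_n:=V(n)-V(n-1)$, one has $V(n+1)-2V(n)+V(n-1)=d_{n+1}-d_n$, whence
\[ \sum_{n=1}^{N}\rho_n=\frac{1}{2}\left(d_{N+1}-d_1\right)=\frac{1}{2}\left(V(N+1)-V(N)-V(1)\right), \]
using $V(0)=0$ (indeed $k_{\varphi}(0,u)=\sqrt{C_{\kappa}}\left[\chi((-u)_+)-\chi((-u)_+)\right]=0$, so $B_0^{\kappa}=0$). Thus the whole problem collapses to showing that the forward increment $V(N+1)-V(N)$ diverges as $N\to+\infty$.

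Next I would bound this increment from below through the derivative of the variance. Since $V$ is convex (as established in the persistence theorem), $V(N+1)-V(N)\geq V'(N)$, and by (\ref{per}) we have $V'(t)=2C_{\kappa}\int_0^{\infty}\kappa(u)\int_u^{t+u}\kappa(z)\,dz\,du$. Restricting the outer integral to $u\in(0,1)$ and noting that for such $u$ (and $N>1$) the interval $(1,N)$ is contained in $(u,N+u)$, so that $\int_u^{N+u}\kappa(z)\,dz\geq\int_1^{N}\kappa(z)\,dz=\chi(N)-\chi(1)$, I obtain
\[ V'(N)\geq 2C_{\kappa}\left(\chi(N)-\chi(1)\right)\int_0^1\kappa(u)\,du=2C_{\kappa}\,\chi(1)\left(\chi(N)-\chi(1)\right), \]
where $\chi(1)=\int_0^1\kappa$ is finite and strictly positive by $\mathbf{(B1)}$.

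Finally, the conclusion follows from the asymptotics (\ref{chilim}), which gives $\chi(N)\to+\infty$; hence $V'(N)\to+\infty$, so $V(N+1)-V(N)\to+\infty$ and $\sum_{n=1}^{N}\rho_n\to+\infty$. Combined with $\rho_n>0$ this yields $\sum_{n=1}^{\infty}|\rho_n|=+\infty$, i.e.\ long-range dependence. The only genuinely delicate point is the lower bound on $V'(N)$: everything hinges on the fact that, under $\mathbf{(C)}$-$\mathbf{(K)}$, the kernel $\kappa$ fails to be integrable at infinity (equivalently $\chi(\infty)=+\infty$ by (\ref{chilim})), which is precisely what forces the tail mass $\int_1^{N}\kappa$ to grow without bound and distinguishes the integral case from the derivative case, where short-range dependence was found.
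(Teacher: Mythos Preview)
Your proof is correct and follows essentially the same approach as the paper: both telescope $\sum\rho_n$ to $\tfrac{1}{2}\bigl(V(N+1)-V(N)\bigr)$ plus a constant, and then show $V(N+1)-V(N)\to+\infty$ via $\chi(N)\to+\infty$ from (\ref{chilim}). The only cosmetic difference is that the paper bounds $V(N+1)-V(N)$ directly from the variance formula (\ref{variance}), obtaining $V(N+1)-V(N)\geq C_{\kappa}\chi(N)^2$ through the $J_t^2$ term, whereas you pass through convexity and the derivative formula (\ref{per}) to get a linear bound $2C_{\kappa}\chi(1)\bigl(\chi(N)-\chi(1)\bigr)$; either is sufficient.
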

\begin{proof}
Since, for any $n$, $\rho_n >0$, we can write, for $N \geq 2,$ that
\begin{eqnarray}
\sum_{n=2}^{N}\rho_n &=&\frac{1}{2}\sum_{n=2}^{N}\left[ \mathbb{E}\left( B_{n+1}^{\kappa }\right) ^{2}+\mathbb{E}%
\left( B_{n-1}^{\kappa }\right) ^{2}-2\mathbb{E}(B_{n}^{\kappa })^{2}\right] \notag \\
&=&\frac{1}{2}\left[ \mathbb{E}\left( B_{N+1}^{\kappa }\right) ^{2}-\mathbb{E}%
\left( B_{N}^{\kappa }\right) ^{2}+\mathbb{E}%
\left( B_{1}^{\kappa }\right) ^{2}-\mathbb{E(}B_{2}^{\kappa })^{2}\right].
\end{eqnarray}
For the first two terms, we have that
\begin{eqnarray*}
&&\mathbb{E}\left( B_{N+1}^{\kappa }\right) ^{2}-\mathbb{E}%
\left( B_{N}^{\kappa }\right) ^{2} \\
&=& C_{\kappa}  \int_{0}^{+\infty} \left[ \int_{N+v}^{N+v+1}\kappa (z)dz \right]  \left[ 2\int_{v}^{N+v}\kappa (z)dz +\int_{N+v}^{N+v+1}\kappa (z)dz \right] dv  + \\
&+& C_{\kappa} \int_{N}^{N+1} \left[ \int_{0}^{s}\kappa (z)dz \right]^2 ds \\
&\geq& C_{\kappa} \left[ \int_{0}^{N} \kappa (z)dz \right]^2 (N+1-N) \to +\infty,
\end{eqnarray*}
as $N \to +\infty$, since $\kappa \not\in L^1 (\epsilon, +\infty)$, by (\ref{chilim}). 
\end{proof}

\section{Special cases}\label{SubSec: SpecialCases}

\begin{enumerate}
\item \emph{Stable case (FBM)}: let us choose as Bernstein function the Laplace exponent of a stable subordinator, i.e. $%
\varphi(x)=x^{(1-\alpha )/2},$ for $\alpha \in (0,1)$, which satisfies conditions $\textbf{(C)-(K)}$ and corresponds to the tail L\'{e}vy measure $\nu
(s)=s^{(\alpha -1)/2}/\Gamma ((1+\alpha )/2)$. As a consequence, the process $B_{t}^{\nu }$
reduces to the fractional Brownian motion with Hurst parameter $H=\alpha /2.$
Note that, in this case, we have that $\nu \in L^{p_{1}}(0,\epsilon )\cap
L^{p_{2}}(\epsilon ,\infty )$, for $p_{1}\in \lbrack 1,2/(1-\alpha )]$ and $%
p_{2}\in (2/(1-\alpha ),\infty )$; moreover, the assumptions $\textbf{ (A1)-(A2)}$ are
satisfied for any $\alpha $ (see \cite{GRO2}).

On the other hand, for $\alpha \in (1,2)$, in order to obtain the case of the fractional Brownian motion with
Hurst parameter $H=\alpha/2\in (1/2,1)$, we must consider the function $\varphi(s)=1/\tilde{\kappa}(s)=s^{(\alpha -1)/2}$, which corresponds to the Sonine pair $\kappa (x)=x^{(\alpha
-3)/2}/\Gamma ((\alpha -1)/2)$ and $\nu (x)=x^{(1-\alpha )/2}/\Gamma ((3-\alpha )/2)$. It
is easy to check that $\textbf{(C)-(K)}$ are verified. Moreover, $\kappa$ and $\chi $
satisfy \textbf{(B1)-(B2)},
respectively; the operator $%
\mathcal{I}_{-}^{(\kappa )}$ reduces to $\mathcal{I}_{-}^{(\alpha
-1)/2} $ whose Fourier transform is given in (\ref{ex3}).

Thus, for any $\alpha \in (0,2),$ the process reduces to the FBM, represented as

\begin{equation*}
  B_{t}^{\alpha/2}=\frac{\sqrt{C_\alpha}}{\Gamma((1+\alpha)/2)} \int_{\mathbb{R}} [(t-u)^{\frac{\alpha-1}{2}}_+ -(-u)^{\frac{\alpha-1}{2}}_+]dW_u ,
\end{equation*}
where $C_\alpha =sin(\pi \alpha/2) \Gamma(\alpha+1)$ (cf. \cite{MIS}, taking into account that we restrict here to the case of positive $t$).

\item \emph{Distributed-order stable case}: the previous example can be extended both for $\alpha \in (0,1)$ and for $\alpha \in (1,2) $,
 by considering the case of a random index $\alpha$, with discrete
distribution $\left\{ {(\alpha _{k},p_{k}),k=1,...,n}\right\} $, $n\in \mathbb{N}$, where
$p_{k}=P({\alpha =\alpha _{k})}$. The corresponding Bernstein function is
given by $\varphi (x)=\sum_{k=1}^{n}p_{k}x^{(1-\alpha _{k})/2},$ for $\alpha_k \in
(0,1)$, and by $\varphi (x)=\sum_{k=1}^{n}p_{k}x^{(\alpha _{k}-1)/2},$ for $%
\alpha_k \in (1,2)$, $k=1,...,n$.

Correspondingly, the process is defined as in
Def. \ref{def4} with $\mathcal{M}_{-}^{(\varphi )}=\sum_{j=1}^{n}p_{j}%
\mathcal{D}_{-}^{(1-\alpha _{j})/2},$ for $\alpha _{j}\in (0,1)$, $%
j=1,...,n$, and $\mathcal{M}_{-}^{(\varphi )}=\sum_{j=1}^{n}p_{j}\mathcal{I}%
_{-}^{(\alpha _{j}-1)/2},$ for $\alpha _{j}\in (1,2)$, $j=1,...,n,$ by
using the functions $\nu (x)=\sum_{j=1}^{n}\frac{p_{j}}{\Gamma ((1+\alpha
_{j})/2)}x^{(\alpha _{j}-1)/2}$ and $\kappa (x)=\sum_{j=1}^{n}\frac{p_{j}}{%
\Gamma ((\alpha _{j}-1)/2)}x^{(\alpha _{j}-3)/2}$, respectively. Thus, for $\alpha_j \in (0,2),$ $j=1,...,n$, we can define the process as

\begin{equation*}
B_{t}^{\varphi }=\sum_{j=1}^{n}\frac{p_{j}\sqrt{C_{\alpha_j}}}{\Gamma ((1+\alpha
_{j})/2)}\int_{\mathbb{R}}\left[ (t-u)_{+}^{(\alpha
_{j}-1)/2}-(-u)_{+}^{(\alpha _{j}-1)/2}\right] dW_{u},
\end{equation*}%
where $C_{\alpha_j}$ can be easily evaluated.
The case of an absolutely continuous index $\alpha $, with density function $%
f_{\alpha }(\cdot ),$ can be treated similarly, by considering either $%
\varphi (x)=\int_{0}^{1}x^{(1-a)/2}f_{\alpha }(a)da$ or $\varphi
(x)=\int_{1}^{2}x^{(a-1)/2}f_{\alpha }(a)da.$

\item \emph{Tempered stable case I}: let us consider
$\varphi(x)=x(x+\lambda)^{-(\alpha +1)/2}$,
which is a Bernstein function, as far as $\alpha$ is in $(0,1)$, as can be checked by differentiating. The corresponding
tail L\'{e}vy measure is
equal to $\nu (s)=e^{-\lambda s}s^{(\alpha -1)/2}/\Gamma ((1+\alpha )/2).$ Here $%
\nu \in L^{1}(\mathbb{R}^{+})$, since, for $\epsilon >0$ and $t\geq \epsilon
,$
\begin{equation*}
\frac{1}{\Gamma ((1+\alpha )/2)}\int_{t}^{\infty }e^{-\lambda s}s^{(\alpha
-1)/2}ds=\frac{\Gamma
((\alpha +1)/2,t\lambda )}{\lambda ^{(\alpha +1)/2}\Gamma ((1+\alpha )/2)},
\end{equation*}%
where $\Gamma(\rho,x):=\int_{x}^{+\infty}e^{-z}z^{\rho -1}dz$ is the upper-incomplete Gamma function.
Correspondingly, the process $B_{t}^{\nu }$ reduces to
the tempered fractional Brownian motion with Hurst parameter $H=\alpha /2<1/2$
(see, for example, \cite{MEE}). Indeed, we have that
\begin{equation*}
B_{t}^{\nu }=\frac{\sqrt{C_{\nu }}}{\Gamma ((1+\alpha )/2)}\int_{\mathbb{R}}\left[
e^{-\lambda (t-u)_{+}}(t-u)_{+}^{(\alpha -1)/2}-e^{-\lambda
(-u)_{+}}(-u)_{+}^{(\alpha -1)/2}\right] dW_{u}.
\end{equation*}%
Then the normalizing constant can be evaluated as follows, for $t>0$,%
\begin{eqnarray*}
&&\int_{\mathbb{R}}k(t,u)^{2}du=\frac{C_{\nu }}{\Gamma ((1+\alpha )/2)^2}\int_{\mathbb{R}}\left[ e^{-\lambda
(t-u)_{+}}(t-u)_{+}^{(\alpha -1)/2}-e^{-\lambda (-u)_{+}}(-u)_{+}^{(\alpha
-1)/2}\right] ^{2}du \\
&=&\frac{C_{\nu }}{\Gamma ((1+\alpha )/2)^2}\int_{0}^{+\infty }\left[ e^{-\lambda (t+u)}(t+u)^{(\alpha
-1)/2}-e^{-\lambda u}u^{(\alpha -1)/2}\right] ^{2}du+ \\
&+&\frac{C_{\nu }}{\Gamma ((1+\alpha )/2)^2}\int_{0}^{t}e^{-2\lambda (t-u)}(t-u)^{\alpha -1}du=:[I_{\lambda ,\alpha
,t}+I_{\lambda ,\alpha ,t}^{\prime }]\frac{C_{\nu }}{\Gamma ((1+\alpha )/2)^2}.
\end{eqnarray*}%
For the first integral we have that%
\begin{eqnarray*}
I_{\lambda ,\alpha ,t} &=&\int_{0}^{+\infty }e^{-2\lambda (t+u)}(t+u)^{\alpha
-1}du+\int_{0}^{+\infty }e^{-2\lambda u}u^{\alpha -1}du+ \\
&&-2e^{-\lambda t}\int_{0}^{+\infty }e^{-2\lambda u}u^{(\alpha
-1)/2}(t+u)^{(\alpha -1)/2}du \\
&=&e^{-2\lambda t}t^{\alpha }\Psi (1;\alpha +1);2\lambda t)+(2\lambda
)^{-\alpha }\Gamma (\alpha ) -2t^{\alpha }e^{-\lambda t}\Psi ((\alpha +1)/2;\alpha +1);2\lambda t),
\end{eqnarray*}%
where $\Psi (a;c;x):=\int_{0}^{+\infty }e^{-xu}u^{a-1}(1+u)^{c-a-1}du$ is
the Tricomi's confluent hypergeometric function (see \cite{KIL}, p.30).

The second integral reads%
\begin{equation*}
I_{\lambda ,\alpha ,t}^{\prime }=\int_{0}^{t}e^{-2\lambda (t-u)}(t-u)^{\alpha
-1}du=(2\lambda )^{-\alpha }\gamma (\alpha ;2\lambda t).
\end{equation*}%
Therefore, we have that $C_{\nu }= \Gamma
((1+\alpha )/2)^2 /\left( I_{\lambda ,\alpha ,1}+I_{\lambda ,\alpha ,1}^{\prime
}\right).$
It is proved in \cite{LUC2} that the Sonine associate kernel of $\nu_{\beta, \lambda}
 (s):=e^{-\lambda s}s^{\beta -1}/\Gamma (\beta),$ for $\beta \in (0,1)$, is given by
$\kappa_{\beta, \lambda}(s)=\nu_{1-\beta, \lambda}(s)+\lambda \int_{0}^{s}\nu_{1-\beta, \lambda}(x)dx,$
thus defining the process through the fractional integral, even if possible, is rather complicated (since it involves
 integrals of incomplete gamma functions).

\item \emph{Tempered stable case II}: choosing the Bernstein function $\varphi(x)=(\lambda +x)^{\alpha }-\lambda
^{\alpha }, $ for $\lambda \geq 0$ and $\alpha \in (0,1)$, i.e. $\nu (s)=\alpha \lambda ^{\alpha }\Gamma (-\alpha ,\lambda s)/\Gamma
(1-\alpha )$, we have the
following process%
\begin{equation*}
B_{t}^{\nu }=\frac{\sqrt{C_{\nu }}\alpha \lambda ^{\alpha }}{\Gamma (1-\alpha )}%
\int_{\mathbb{R}}\left[ \Gamma (-\alpha ,\lambda (t-u)_{+})-\Gamma (-\alpha
,\lambda (-u)_{+})\right] dW_{u},
\end{equation*}%
where the normalizing constant $C_{\nu}$ can be evaluated accordingly and the assumptions $\mathbf{(C)}$ and $\mathbf{(A1)}$-$\mathbf{(A2)}$ (with $p=1$) can be easily checked by considering the properties of the upper-incomplete gamma function (see \cite{BG}).

\item \emph{Mittag-Leffler case}: if we choose the Bernstein function $\varphi(x)=\frac{x ^{1-\beta+\alpha}}{x ^\alpha +1}$, for $0<\alpha <\beta <1$, the corresponding Sonine pair is given by $\nu(x)=x^{\beta -1}E_{\alpha, \beta}(-x^\alpha)$ and $\kappa(x)=\frac{x^{\alpha -\beta}}{\Gamma (1-\beta+\alpha)}+\frac{x^{ -\beta}}{\Gamma (1-\beta)}$ (see \cite{LUC2} and \cite{LEO}, for details). It is easy to check that \textbf{(C)-(K)} hold; moreover, $\chi(x)=\frac{x^{\alpha -\beta+1}}{\Gamma (2-\beta+\alpha)}+\frac{x^{ 1-\beta}}{\Gamma (2-\beta)}$ satisfies \textbf{(B1)-(B2)}. Therefore we can define the process as
     \begin{equation*}
B_{t}^{\kappa }=\sqrt{C_{\kappa }}\int_{\mathbb{R}%
}\left[ \frac{(t-u)_{+}^{\alpha -\beta +1}-(-u)_{+}^{\alpha -\beta +1}}{\Gamma (2-\beta +\alpha )}+\frac{(t-u)_{+}^{1-\beta }-(-u)_{+}^{1 -\beta }}{\Gamma (2-\beta )}\right] dW_{u}.
\end{equation*}%
On the other hand, $\nu(\cdot)$ and $\overline{\nu}(\cdot)$  satisfy $\textbf{(A1)-(A2)}$, as can be checked, by considering the asymptotic behavior of the Mittag-Leffler function and of its derivatives (see \cite{KIL}, p.43), so that we have
\begin{equation*}
B_{t}^{\nu }=\sqrt{C_{\nu }}\int_{\mathbb{R}}\left[(t-u)_{+}^{\beta-1} E_{\alpha ,\beta} (-(t-u)_{+}^{\alpha})-(u)_{+}^{\beta-1}E_{\alpha ,\beta} (-(-u)_{+}^{\alpha})\right] dW_{u},
\end{equation*}%

\item \emph{Gamma case}: if we consider the Laplace exponent of the Gamma subordinator, i.e. $\varphi(x)=\log (1+x)$, then the L\'{e}vy density is $\overline{\nu }(s)=e^{-s}/s$, for $s >0$, and $\nu (s)=%
\func{E_1}(s)$ (where $\func{E_1}(\cdot )$ denotes the exponential integral). It is proved in \cite{COL} that
the Sonine associate kernel is $\kappa(x)=\int_{-1}^{0}\gamma(y,x)/\Gamma(y)dy$. 

It can be checked that assumptions $\mathbf{(A1)}$-$\mathbf{(A2)}$ are satisfied by $\overline{\nu}(\cdot)$ and $\nu(\cdot)$, respectively, while checking $\mathbf{(B1)}$-$\mathbf{(B2)}$ for $\kappa(\cdot)$ and $\chi(\cdot)$ is more complicated. We then define, in this case, the following process
 \begin{equation*}
B_{t}^{\nu }=\sqrt{C_{\nu }}\int_{\mathbb{R}%
}\left[ E_{1}(t-u)_{+}-E_{1}(-u)_{+}] \right] dW_{u},
\end{equation*}%
where the constant $C_{\nu }=\int_{\mathbb{R}}k(1,x)^2 dx=1.14$. Since, in this case $\nu \in L^2 (\mathbb{R}^+)$, in view of Corollary \ref{corovar}, we can establish that the process has finite variance in the long time.

\item \emph{Composite stable case}:
Let now consider the function $\kappa (\cdot )$ given in Ex.11.18 (i) in 
\cite{SCH}, i.e. $\kappa (z)=z^{-\alpha }1_{0<z\leq 1}+z^{-\beta }1_{z>
1},$ for $0<\alpha<\beta<1$, we have that $\chi(x)=\frac{x^{1-\alpha }}{1-\alpha}1_{0<x\leq 1}+[K_{\alpha,\beta}+\frac{x^{1-\beta }}{1-\beta}]1_{x>
1}$, where $K_{\alpha,\beta}:=(\alpha-\beta)/[(1-\alpha)(1-\beta)]$. Thus, in order to fulfill $\mathbf{(B2)}$, we need to add the following condition: $\alpha<2/3$. We can define the following process:

\begin{eqnarray*}
B_{t}^{\kappa }&=&\sqrt{C_{\kappa }}\int_{-\infty}^{t}\left[ \frac{(t-u)^{1-\alpha}}{1-\alpha}1_{t-1<u<t}+\left(\frac{(t-u)^{1 -\beta}}{1-\beta}+K_{\alpha,\beta}\right)1_{u<t-1}\right]dW_u+ \\
&-&\sqrt{C_{\kappa }}\int_{-\infty}^{0}\left[ \frac{(-u)^{1-\alpha}}{1-\alpha}1_{-1<u<0}+\left(\frac{(-u)^{1 -\beta}}{1-\beta}+K_{\alpha,\beta}\right)1_{u<-1}\right]dW_u.
\end{eqnarray*}
Other "composite cases" can be similarly derived from Examples \ref{Ex54} and \ref{Ex55}.
\end{enumerate}
\begin{remark}
We note that, if we consider the Bernstein function $\varphi(\theta)=\theta/(1+\theta)$ and thus $\nu (s)=e^{-s}=\overline{\nu }(s)$, we have
\begin{equation*}
B_{t}^{\nu }=\sqrt{C_{\nu }}\int_{\mathbb{R}}\left[ e^{-(t-u)_{+}}-e^{-(-u)_{+}}%
\right] dW_{u},
\end{equation*}%
with $C_{\nu }=e/(e-1)$. Its covariance function is then equal to
\begin{equation*}
\mathrm{Cov}\left[ B_{t}^{\nu },B_{s}^{\nu }\right] =C_{\nu }\left(
1-e^{-|t|}-e^{-|s|}+e^{-|t-s|}\right),
\end{equation*}%
for $t \in \mathbb{R}$, and $B_{t}^{\nu} $ is an Ornstein-Uhlenbeck type process with starting point in $0$.
However, we have not included this case in our examples above, because the L\'{e}vy measure $\overline{\nu }(\cdot)$ is finite on $[0,+\infty)$
and thus the kernel of the derivative in not singular. Therefore the condition $\textbf{(C)}$ is violated.
\end{remark}

\begin{appendix}

\section{Fractional derivatives and integrals}
We recall some well-known definitions of fractional operators. 
For a function $f \in L^p (\mathbb{R}),$ $\alpha \in (0,1)$  and $p \in [1,1/\alpha)$ the \emph{right sided} and \emph{left-sided Riemann-Liouville fractional integrals}
are defined, respectively, as follows:
\begin{equation}
(\mathcal{I}_{-}^{\alpha }f)(x):=\frac{1}{\Gamma (\alpha )}%
\int_{x}^{+\infty }f(t)(t-x)^{\alpha-1 }dt  \label{i1}
\end{equation}%
and%
\begin{equation}
(\mathcal{I}_{+}^{\alpha }f)(x):=\frac{1}{\Gamma (\alpha )}%
\int_{- \infty}^{x }f(t)(x-t)^{\alpha-1 }dt,  \label{i2}
\end{equation}%
for $%
x\in \mathbb{R}$.

Under the same assumptions on $\alpha,p$ and for $f \in \mathcal{I}_{\pm}^{\alpha }(L^p (\mathbb{R})),$ the so-called \emph{right-sided} and \emph{left-sided
Riemann-Liouville fractional derivative}, are given respectively by

\begin{equation}
(\mathcal{D}_{-}^{\alpha }f)(x):=-\frac{1}{\Gamma (1-\alpha )}\frac{d}{dx}%
\int_{x}^{+\infty }f(t)(t-x)^{-\alpha }dt  \label{rl}
\end{equation}%
and%
 \begin{equation}
        (\mathcal{D}_{+}^{\alpha }f)(x):=\frac{1}{\Gamma (1-\alpha )}\frac{d}{dx}%
        \int_{-\infty}^{x }f(t)(x-t)^{-\alpha }dt,  \label{rl2}
    \end{equation}%
for $%
x\in \mathbb{R}$.
Analogously, the \emph{right-sided} and \emph{left-sided Weyl fractional derivative}, for $f\in L^{p}(\mathbb{R})$ and $%
    x\in \mathbb{R}$, are defined, respectively, as
      \begin{equation}
(\mathfrak{D}_{-}^{\alpha }f)(x)=\frac{\alpha }{\Gamma (1-\alpha )}%
\int_{x}^{+\infty }\left[ f(t)-f(x)\right] (t-x)^{-\alpha -1}dt {\label{wr}}
\end{equation}%
 and%
    \begin{equation}
        (\mathfrak{D}_{+}^{\alpha }f)(x)=\frac{\alpha }{\Gamma (1-\alpha )}%
        \int_{-\infty}^{x}\left[ f(t)-f(x)\right] (x-t)^{-\alpha -1}dt {\label{wr2}}
    \end{equation}%
(see \cite{MIS}, p.3 and 4).

\section{Distributions' space, S-transform and characterization's theorems}

    \subsection{Hida distribution space and its extension}\label{App:HidaDistribution}
        We define $(L^2):=L^2(\mathcal{S}',\mathcal{B},\mu)$ where $(\mathcal{S}',\mathcal{B},\mu)$ is the white noise space  and recall that each element
        of the Hilbert space $(L^2)$ can be uniquely expanded into a series of multiple
        Wiener integrals by the Wiener-Ito-Segal chaos decomposition; see \cite{HID}.
        Indeed, each element $F \in (L^2)$ corresponds to a sequence of functions,
        $\{f^{(n)} \in L_{\mathbb{C}}^{2 \; \hat{\otimes} n}$, $n \in \mathbb{N}\}$,
        such that $F(\cdot)=\sum_{n=0}^{\infty} \langle : \cdot^{\otimes n} : , f^{(n)} \rangle$ in
        the sense of $(L^2)$, where $\langle : \cdot^{\otimes n} : , f^{(n)} \rangle$ is the Hermite
        polynomial in $\langle \cdot , f \rangle$ of order $n$. Furthermore, we have that  $\vvvert  F \vvvert_{(L^2)}^2=\sum_{n=0}^\infty n! \|f^{(n)}\|^2_{L^2(\R^n)}$.
        \\
        In Gaussian analysis, many authors built Gelfand triples starting from $(L^2)$ or from the space of linear combination of polynomials
        \[ \mathcal{P}(\mathcal{S}'):= \{ \varphi \in (L^2) \mid \varphi(\omega)=\sum_{n=0}^N  \langle : \omega^{\otimes n} : , f^{(n)} \rangle, \, f^{(n)}\in L_{\mathbb{C}}^{2 \; \hat{\otimes} n}, \, N \in \mathbb{N}   \};  \]
        see \cite{HID}, \cite{KON} and \cite{KON2}.
        Here we present briefly the construction of the triple $(\mathcal{S})\subset (L^2)\subset (\mathcal{S})'$,
        where $(\mathcal{S})'$ is the Hida distributions space and $(\mathcal{S})$ is the test functions' space. These spaces were extended by \cite{KON} as follows.
        \\
       The Hida space $(\mathcal{S})'$ is defined by using the family of norms:

        \[  \vvvert F \vvvert_p:=\left( \sum_{n \geq 0} n!\|f^{(n)}\|^2_p \right)^{1/2},\quad p \in \mathbb{N} \text{ and } F \in (L^2) \]
        where $\|f^{(n)}\|_p:=\|(A^p)^{\otimes n} f^{(n)}\|_{L^2(\mathbb{R}^n)}$, with $f^{(1)}=f,$ and $A$ is the harmonic oscillator (see Section V.3 in \cite{REE}). By defining

        \[ (\mathcal{S})_p:=\{ \eta \in (L^2) \mid \vvvert \eta\vvvert_p < \infty \}  \]
        and $(\mathcal{S})_{-p}$ as its dual with respect to $(L^2)$, we introduce the following Gelfand triple:

        \[  (\mathcal{S}) \subset (L^2) \subset (\mathcal{S})'  \]
        where the space of test functions $(\mathcal{S})$ can be represented as

        \[   (\mathcal{S})= \underset{p \in \mathbb{N}}{\text{pr lim}}\,(\mathcal{S})_p .  \]
       Moreover, let the space of Hida distributions $(\mathcal{S})'$ be defined as

        \[   (\mathcal{S})'= \underset{p \in \mathbb{N}}{\text{ind lim}}\,(\mathcal{S})_{-p},  \]
        where $\text{pr lim}$ and $\text{ind lim}$ stand for the projective limit and the inductive limit, respectively (see pag.~74 in \cite{HID}).
        The Hida test functions space $(\mathcal{S})$ and its dual $(\mathcal{S})'$ can be extended by noting that the space of linear combination of polynomials $\mathcal{P}(\mathcal{S}')$ is dense in $(L^2)$. Moreover, its completion, with respect to the family of norms

        \[ \vvvert \varphi \vvvert_{p,q,\beta}:= \sum_{n \geq 0} (n!)^{1+\beta} 2^{nq} \|\varphi^n\|^2_p, \quad p,q \in \mathbb{N}, \beta \in [0,1],\]
        coincides with the set of Hilbert spaces $(L^2_p)_q^{\beta}$. By using their projective and inductive limits, we introduce new spaces of test functions and distributions, which extend the triple formed by the Hida space, $(L^2)$ and the distributions' space. Indeed, we obtain the space of test functions by

        \[ (\mathcal{S})^{\beta}:= \underset{p,q \in \mathbb{N}}{\text{pr lim}}\, (L^2_p)_q^{\beta} .\]

        Moreover, denoting the dual space of $(L^2_p)_q^{\beta}$ with respect to $(L^2)$ by $(L^2_{-p})_{-q}^{-\beta}$, we endow it with the family of norms

        \[  \vvvert \Phi \vvvert_{-p,-q,-\beta}:= \sum_{n \geq 0} (n!)^{1-\beta} 2^{-nq} \|\Phi^n\|^2_{-p}, \quad p,q \in \mathbb{N}, \beta \in [0,1], \]
        such that it is formed by those distributions $\Phi$ for which $\vvvert \Phi \vvvert_{-p,-q,-\beta}<\infty$.
        Thus, we gain the distributions' space $(\mathcal{S})^{-\beta}$ by

        \[  (\mathcal{S})^{-\beta}= \underset{p,q \in \mathbb{N}}{\text{ind lim}}\,(L^2_{-p})_{-q}^{-\beta}.   \]

        Hence, we have that the triple $(\mathcal{S})^{\beta} \subset (L^2) \subset (\mathcal{S})^{-\beta}$ extends $(\mathcal{S}) \subset (L^2) \subset (\mathcal{S})'$ for $0\leq \beta\leq 1$:

        \[   (\mathcal{S})^1\subset(\mathcal{S})^{\beta}\subset(\mathcal{S})\subset(L^2)\subset(\mathcal{S})'\subset(\mathcal{S})^{-\beta}\subset(\mathcal{S})^{-1},\]
        where we identify $(\mathcal{S})^0 \equiv (\mathcal{S})$; see \cite{KON}.

        \subsection{S-transform and characterization theorems}
        We now introduce an integral transform, called $S$-transform, in order to characterize the distributions space $(\mathcal{S})^{-1}$ for calculations.
        For further insight on the $S$-transform and the distributions' spaces $(\mathcal{S})^{-\beta}$, $\beta \in [0,1]$; see
        \cite{KON}.

    \begin{definition}\label{def:STransform}
        Let $U_{p,q}:=\{\xi \in \mathcal{S}_{\mathbb{C}} \mid 2^q \|\xi\|_p^2<1 \}$, for some $p,q \in \mathbb{N}$. For any $\Phi \in (\mathcal{S})^{-1}$ we define the $S$-transform as

        \[ (S \Phi )(\xi):= e^{-\frac{1}{2}\langle \xi, \xi \rangle } \int_{S'}e^{\langle \omega, \xi\rangle}\Phi(\omega)\mu(d\omega), \quad \xi \in  U_{p,q}. \]
    \end{definition}

The theorem below states the isomorphism relation between the distributions space $(\mathcal{S})^{-1}$ and the space of germs of functions holomorphic at zero $\text{Hol}_{0}(\mathcal{S}_{\mathbb{C}})$.

    \begin{theorem}[cf.~Thm.~8.34 in \cite{KON2}]\label{thm:STransfoIsomo}
        The $S$-transform is a topological isomorphism from $(\mathcal{S})^{-1}$ to $\text{Hol}_{0}(\mathcal{S}_{\mathbb{C}})$.
    \end{theorem}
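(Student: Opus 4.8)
The plan is to prove the characterization by passing to the Wiener--It\^o--Segal chaos decomposition, thereby turning the $S$-transform into a power series whose radius/growth properties match exactly the Kondratiev norms defining $(\mathcal{S})^{-1}$ and the holomorphy germ condition defining $\text{Hol}_{0}(\mathcal{S}_{\mathbb{C}})$. First I would establish the bridging identity. Writing $\Phi\in(\mathcal{S})^{-1}$ through its decomposition $\Phi=\sum_{n\geq 0}\langle :\cdot^{\otimes n}:,\Phi^{(n)}\rangle$ and using the generating relation $e^{\langle\omega,\xi\rangle-\frac12\langle\xi,\xi\rangle}=\sum_{n\geq0}\tfrac{1}{n!}\langle :\omega^{\otimes n}:,\xi^{\otimes n}\rangle$ together with the orthogonality of the Wick monomials under $\mu$, Definition~\ref{def:STransform} collapses to
\[ (S\Phi)(\xi)=\sum_{n\geq 0}\langle \Phi^{(n)},\xi^{\otimes n}\rangle,\qquad \xi\in U_{p,q}. \]
This is the key point: the $S$-transform is precisely the generating function of the kernels, so its analytic behaviour near $0$ is governed by the growth of the $\|\Phi^{(n)}\|_{-p}$.

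For the direction $(\mathcal{S})^{-1}\to\text{Hol}_{0}(\mathcal{S}_{\mathbb{C}})$, I would fix $p,q$ with $\vvvert\Phi\vvvert_{-p,-q,-1}<\infty$, i.e.\ $\sum_{n}2^{-nq}\|\Phi^{(n)}\|^2_{-p}<\infty$, and bound each term by Cauchy--Schwarz, $|\langle\Phi^{(n)},\xi^{\otimes n}\rangle|\leq\|\Phi^{(n)}\|_{-p}\,\|\xi\|_{p}^{n}$. Since $2^{q}\|\xi\|_p^2<1$ on $U_{p,q}$, a further Cauchy--Schwarz over $n$ shows the series converges absolutely and uniformly on $U_{p,q}$ and is bounded there; being a locally uniform limit of continuous polynomials, $S\Phi$ is holomorphic at $0$, hence lies in $\text{Hol}_{0}(\mathcal{S}_{\mathbb{C}})$. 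Injectivity is immediate from the same identity: the kernel $\Phi^{(n)}$ is recovered (by polarization) from the $n$-th Gateaux derivative of $S\Phi$ at $0$, so $S\Phi\equiv0$ on a neighborhood of $0$ forces every $\Phi^{(n)}=0$ and thus $\Phi=0$.

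The reverse direction is where the real work lies. Given a germ $F$, holomorphic and bounded by some $M$ on a neighborhood $U_{p,q}$, I would Taylor-expand it at the origin, $F(\xi)=\sum_{n\geq0}\tfrac1{n!}D^{n}F(0)(\xi,\dots,\xi)$, where each $D^{n}F(0)$ is a continuous symmetric $n$-linear form on $\mathcal{S}_{\mathbb{C}}$. By the kernel theorem for the nuclear space $\mathcal{S}$, every such form is represented by a kernel $\Phi^{(n)}\in(\mathcal{S}_{-p'})^{\hat\otimes n}$ for a suitable $p'\geq p$; applying the Cauchy estimates on balls contained in $U_{p,q}$ then produces bounds of the form $\|\Phi^{(n)}\|_{-p'}\leq M\,C^{n}2^{nq/2}$. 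The decisive step is to choose new indices $p''\geq p'$ and $q''$ so that $\sum_{n}2^{-nq''}\|\Phi^{(n)}\|^2_{-p''}<\infty$, which places $\Phi:=\sum_{n\geq0}\langle :\cdot^{\otimes n}:,\Phi^{(n)}\rangle$ in $(L^2_{-p''})^{-1}_{-q''}\subset(\mathcal{S})^{-1}$ and, by the bridging identity, satisfies $S\Phi=F$. Finally, the two families of estimates are mutually continuous --- a Kondratiev norm bound controls the supremum of $S\Phi$ on a neighborhood of $0$, and a holomorphic bound controls the recovered kernel norms --- so $S$ is a continuous bijection with continuous inverse, hence a topological isomorphism. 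The main obstacle I anticipate is precisely this surjectivity step: converting the Cauchy estimates for $F$ into \emph{summable} Kondratiev bounds for the reconstructed kernels, which hinges on the kernel theorem and a careful optimization of the index pair $(p'',q'')$.
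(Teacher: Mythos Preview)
The paper does not actually prove this statement: it is stated in the Appendix with the attribution ``cf.~Thm.~8.34 in \cite{KON2}'' and no proof is given, the result being quoted from the literature as a tool. Your sketch follows the standard route used in that reference --- reducing the $S$-transform to the generating series $\sum_n\langle\Phi^{(n)},\xi^{\otimes n}\rangle$ via chaos decomposition, then matching the Kondratiev norm conditions with holomorphy at $0$ through Cauchy--Schwarz in one direction and Cauchy estimates plus the kernel theorem in the other --- and is correct in outline; there is simply nothing in the present paper to compare it against.
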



   \begin{theorem}[cf.~Thm.~2.12 in \cite{GRO2}]\label{thm:CharctConvergSequenceHidaSpace}
        Let $\{\Phi_n\}_{n\in \mathbb{N}}$ be a sequence in $(\mathcal{S})^{-1}$. Then $\{\Phi_n\}_{n \in \mathbb{N}}$
        converges strongly in $(\mathcal{S})^{-1}$ if and only if there exist $p,q \in \mathbb{N}$ such that:
        \begin{itemize}
            \item[i)] For each $\xi \in U_{p,q}$ the sequence $\{(S\Phi_n) (\xi)\}_{n \in \mathbb{N}}$ is a Cauchy sequence;
            \item[ii)] Every $S(\Phi_n)(\cdot)$ is holomorphic on $U_{p,q}$, and there exist a constant $C>0$ such that
            \[ |S(\Phi_n) (\xi)|\leq C  \]
            for all $\xi \in U_{p,q}=\{\xi \in \mathcal{S}_{\mathbb{C}} \mid 2^q \|\xi\|_p^2<1 \}$ and for all $n \in \mathbb{N}$.
        \end{itemize}
    \end{theorem}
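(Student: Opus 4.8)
The plan is to derive the statement entirely from the topological isomorphism recorded in Theorem~\ref{thm:STransfoIsomo}, which identifies $(\mathcal{S})^{-1}$ with the space $\text{Hol}_0(\mathcal{S}_{\mathbb{C}})$ of germs at the origin of holomorphic functions. Under this identification, strong convergence of $\{\Phi_n\}_{n\in\mathbb{N}}$ in $(\mathcal{S})^{-1}$ is \emph{equivalent} to convergence of the image sequence $\{S\Phi_n\}_{n\in\mathbb{N}}$ in $\text{Hol}_0(\mathcal{S}_{\mathbb{C}})$, so it suffices to characterize convergent sequences in the latter space. First I would recall the concrete description of $\text{Hol}_0(\mathcal{S}_{\mathbb{C}})$ as the inductive limit of the Banach spaces $E_{p,q}$ of bounded holomorphic functions on the neighbourhoods $U_{p,q}=\{\xi \in \mathcal{S}_{\mathbb{C}} \mid 2^q\|\xi\|_p^2<1\}$, each equipped with the sup-norm, the linking maps being restrictions as $(p,q)$ increase and $U_{p,q}$ shrinks. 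The crucial structural fact is that this is a regular inductive limit of $(DFS)$-type, so that every convergent (a fortiori every bounded) sequence is contained in, and converges within, a single step $E_{p,q}$.

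For the direction ($\Rightarrow$), assume $\Phi_n \to \Phi$ strongly in $(\mathcal{S})^{-1}$. Applying Theorem~\ref{thm:STransfoIsomo} gives $S\Phi_n \to S\Phi$ in $\text{Hol}_0(\mathcal{S}_{\mathbb{C}})$, hence, by regularity of the limit, already in some fixed step $E_{p,q}$, i.e.\ uniformly on $U_{p,q}$. Uniform convergence on $U_{p,q}$ immediately yields condition (i), since pointwise convergence is in particular pointwise Cauchy, and condition (ii), since a norm-convergent sequence in a Banach space is bounded, giving the common constant $C$.

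For the converse ($\Leftarrow$), suppose (i) and (ii) hold on a fixed $U_{p,q}$. By (i) the pointwise limit $F(\xi):=\lim_n (S\Phi_n)(\xi)$ exists for every $\xi\in U_{p,q}$, while by (ii) the family $\{S\Phi_n\}_n$ is uniformly bounded by $C$ on $U_{p,q}$. The key analytic step is to promote this to norm convergence in a slightly smaller step: invoking the infinite-dimensional Vitali/Montel theorem for Fr\'echet-holomorphic functions, the uniform bound together with pointwise convergence forces $F$ to be holomorphic on $U_{p,q}$ and the convergence $S\Phi_n\to F$ to be uniform on compact subsets, hence in the norm of $E_{p',q}$ for suitable $p'>p$. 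Thus $F\in\text{Hol}_0(\mathcal{S}_{\mathbb{C}})$, and by surjectivity of the $S$-transform in Theorem~\ref{thm:STransfoIsomo} there is a unique $\Phi\in(\mathcal{S})^{-1}$ with $S\Phi=F$; transporting the convergence back through the isomorphism delivers $\Phi_n\to\Phi$ strongly in $(\mathcal{S})^{-1}$.

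The main obstacle will be precisely this converse step, namely the analytic upgrade from the hypotheses (i)--(ii) to genuine norm convergence inside one step space. It rests on two functional-analytic ingredients that must be put in place with care: the regularity (compactness of the linking maps) of the $(DFS)$-inductive limit defining $\text{Hol}_0(\mathcal{S}_{\mathbb{C}})$, which confines the limit to a common step, and a Vitali-type theorem in the infinite-dimensional holomorphic setting, which converts pointwise Cauchy behaviour plus a uniform bound into locally uniform convergence of a holomorphic limit. Once these are available, the hypotheses combine routinely to produce a holomorphic limit living in a single $E_{p',q}$, which is exactly the data the isomorphism needs in order to return a strong limit in $(\mathcal{S})^{-1}$.
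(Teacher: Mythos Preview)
The paper does not give a proof of this statement at all: it is quoted in the appendix as a known result, with the citation ``cf.~Thm.~2.12 in \cite{GRO2}'' and the subsequent remark that the sufficient conditions also appear in \cite{KON}. So there is no proof in the paper to compare your proposal against.

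That said, your outline is the standard route to such characterization theorems and is broadly correct in spirit: one transports the problem to $\text{Hol}_0(\mathcal{S}_{\mathbb{C}})$ via Theorem~\ref{thm:STransfoIsomo}, uses the $(DFS)$-structure of the inductive limit to localize to a single step, and then invokes a Vitali/Montel-type argument to upgrade pointwise Cauchy plus uniform boundedness to locally uniform convergence. The only place where real care is needed is exactly the one you flag: the infinite-dimensional Vitali theorem and the regularity of the inductive limit are nontrivial inputs that have to be cited precisely (this is what the references \cite{GRO2} and \cite{KON}, and ultimately \cite{KON2}, supply). Your sketch treats these as black boxes, which is appropriate for a proposal, but a complete proof would need explicit statements of both.
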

\begin{remark}
The sufficient conditions of the theorem above were also stated in Theorem 5 in \cite{KON}.
\end{remark}
\end{appendix}

\end{document}